\newtheorem{theorem}{Theorem}[section]
\newtheorem{lemma}[theorem]{Lemma}
\newtheorem{corollary}[theorem]{Corollary}
\newtheorem{observation}[theorem]{Observation}
\newtheorem{definition}[theorem]{Definition}
\begin{document}

\title{The structure of graphs with no $W_4$ immersion}

\author{
   Matt DeVos\thanks{Department of Mathematics, Simon Fraser University, Burnaby, B.C., Canada V5A 1S6, mdevos@sfu.ca.
     Supported in part by an NSERC Discovery Grant (Canada).}
\and
   Mahdieh Malekian\thanks{Department of Mathematics, Simon Fraser University, Burnaby, B.C., Canada V5A 1S6, mahdieh\_malekian@sfu.ca}
}

\date{}

\maketitle

\begin{abstract}
This paper gives a precise structure theorem for the class of graphs which do not contain $W_4$ as an immersion.  This strengthens a previous result of Belmonte at al. \cite{w4freeBelmonte} that gives a rough description of this class.  In fact, we prove a stronger theorem concerning rooted immersions of $W_4$ where one terminal is specified in advance.  This stronger result is key in a forthcoming structure theorem for graphs with no $K_{3,3}$ immersion.
\end{abstract}

%
%

\section{Introduction} 
\label{sec-intro}
Throughout this paper, we will consider finite loopless undirected graphs which are permitted to have parallel edges.  Let $G$ be a graph and let $xy,yz \in E(G)$ be a pair of distinct edges (with a common neighbour $y$). Then we say $xy, yz$ is  \emph{split off at} $y$ if we delete these edges and add a new edge $xz$. If a graph isomorphic to $H$ can be obtained from a subgraph of $G$ by a sequence of splittings, then we say that $H$ is \emph{immersed} in $G$ or equivalently we say that $G$ has an $H$ \emph{immersion}, and write $G \succ H$ or $H\prec G$.  

There is an alternate way of describing an immersion which is sometimes more useful:   Equivalently a graph $G$ has an immersion of $H$ if there is a function $\phi$ which maps the vertices of $H$ injectively to the vertices of $G$, maps every edge $uv$ of $H$ to a path from $\phi(u)$ to $\phi(v)$ in $G$, and has the property that the paths $\phi(e)$ and $\phi(f)$ are edge-disjoint whenever $e,f \in E(H)$ are distinct.  The vertices in $\phi( V(H) )$ are called the \emph{terminals} of this immersion.  Let us note that there is a stronger form of immersion, known as \emph{strong immersion}, which has the added constraint that the paths $\phi(e)$ must be internally disjoint from the terminals.  We will focus on the weak version which we continue to refer to as immersion.

In the setting of graph minors there are a great number of theorems which give a precise structural description of the class of graphs with no $H$-minor for various small graphs $H$.  Notably, such characterizations exist when $H$ is $K_{3,3}$, $K_5$, $W_4$, $W_5$, Prism, Octahedron, Cube, and $V_8$ \cite{MR1513158, MR0140094, MR999699, MR0157370, MR3090713, MR1794690, MR3548299}.  Here $W_k$ denotes a graph obtained from a cycle of length $k$ by adding one new vertex adjacent to every other, and $V_8$ is Wagner's graph.  In sharp contrast to this, there are very few graphs $H$ for which we have a good description of the class of graphs which do not contain an immersion of $H$.  The most significant works on this to date are as follows.
\begin{enumerate}[label=(\arabic*)]
\item Booth et al. give a handful of structure theorems regarding graphs with no $K_4$ immersion and use them to obtain a fast algorithm for testing the existence of a $K_4$ immersion \cite{MR1671844}.  
\item Belmonte et al. give a rough description of the class of graphs with no $W_4$ immersion  \cite{w4freeBelmonte}.
\item Giannopoulou et al. give a rough description of the class of graphs with no $K_{3,3}$ or $K_5$ immersion \cite{MR3283573}.
\end{enumerate}

In this article we will give a precise structure theorem for the class of graphs with no $H$ immersion when $H=K_4$ and when $H=W_4$ thus improving upon (1) and (2).  In fact, we will prove even stronger variants of these results concerning rooted immersions of $K_4$ and $W_4$.  The latter of these is a key ingredient in a forthcoming paper which improves upon (3) by giving a precise structure theorem for graphs with no $K_{3,3}$ immersion.  

We require a bit of notation before we are ready to state the theorems of interest.  For a graph $G = (V, E)$ and $X\subseteq V$, we denote by $\delta_G(X)$ the set of edges which have exactly one endpoint in $X$, and we let $d_G(X)= |\delta_G(X)|$ (when the graph concerned is clear from the context, we may drop the subscript).  A graph $G$ is said to be \emph{$k$-edge-connected} (\emph{internally $k$-edge-connected}) if $d(X) \ge k$ whenever $|X|, |V\setminus X| \ge 1$ ($\ge 2$).  Now we may state the theorem of Belmonte et. al. (The definition of tree-width is postponed to Section \ref{sec-w4}).

\begin{theorem}[Belmonte, Giannopoulou, Lokshatanov, and Thilikos \cite{w4freeBelmonte}]
\label{belmonte-w4-intro}
Let $G$ be a $3$-edge-connected, and internally $4$-edge-connected graph. If $G$ does not immerse $W_4$, then either $G$ is cubic or the tree-width of $G$ is bounded by an absolute constant.
\end{theorem}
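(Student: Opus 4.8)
The plan is to prove the theorem in the following contrapositive form: if $G$ is $3$-edge-connected, internally $4$-edge-connected, and \emph{not} cubic, yet has tree-width exceeding a suitable absolute constant $c$, then $G \succ W_4$. Two easy reductions come first. A cubic graph has no vertex of degree $\ge 4$, whereas the hub of $W_4$ has degree $4$, so its image under any immersion $\phi$ uses four distinct edges at that vertex and hence that vertex has degree at least $4$ in $G$; thus a cubic graph never immerses $W_4$, and we may assume $G$ is not cubic. Since $3$-edge-connectivity forces minimum degree $\ge 3$, a non-cubic such $G$ has a vertex $v$ with $d(\{v\}) \ge 4$, and the goal becomes to build an immersion of $W_4$ in which $v$ is the hub.

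For the main argument I would invoke the Grid Minor Theorem of Robertson and Seymour: as the tree-width of $G$ is at least $c$ with $c$ large, $G$ contains a $k \times k$ grid minor $\Gamma$ for some large $k = k(c)$. A grid minor behaves as a two-dimensional router: it carries $\Omega(k)$ pairwise edge-disjoint paths between any two of its large parts, it contains a nested family of $\Omega(k)$ pairwise disjoint cycles, and after the deletion of any $O(1)$ of its edges it still contains four pairwise edge-disjoint paths realizing a prescribed $4$-cycle through any four prescribed branch sets. One could first clean $\Gamma$ into a more rigid object such as a large subdivided wall, but since the high-degree vertices of $G$ cannot simply be discarded, one should be prepared to carry out the routing inside the minor itself.

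Now connect $v$ to $\Gamma$. By Menger's theorem, the maximum number of edge-disjoint paths from $v$ to $V(\Gamma)$ equals the minimum of $d(X)$ over sets $X$ with $v \in X$ and $X \cap V(\Gamma) = \emptyset$. If this minimum were at most $3$ then, as $V(\Gamma)$ is huge we have $|V \setminus X| \ge 2$, so internal $4$-edge-connectivity forces $|X| \le 1$, i.e.\ $X = \{v\}$ and $d(X) = d(\{v\}) \ge 4$, a contradiction; hence there are four edge-disjoint $v$–$V(\Gamma)$ paths. Truncate each at its first vertex of $\Gamma$, so that these four paths are internally disjoint from $\Gamma$ and in particular use no edge of $\Gamma$. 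If two of them end at the same branch set, extend one along an unused grid edge to a neighbouring branch set, and repeat until the endpoints $s_1, s_2, s_3, s_4$ are four distinct, well-separated branch sets of $\Gamma$; this spends only $O(1)$ grid edges. Finally, within $\Gamma$ with those $O(1)$ edges removed, route four pairwise edge-disjoint paths forming the cycle $s_1 s_2 s_3 s_4 s_1$, for instance by dedicating disjoint rows and columns of the grid to the four rim paths. Together with the four spokes from $v$ to the $s_i$, these eight edge-disjoint paths form an immersion of $W_4$ with hub $v$, contradicting $G \not\succ W_4$. Taking $c$ large enough that $k(c)$ exceeds the absolute threshold needed to absorb the bounded amount of damage and to fit the rim routing yields the claimed absolute bound.

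\textbf{The main obstacle} is the interface between $v$ and the grid: ensuring four edge-disjoint paths that reach four \emph{distinct} and suitably spread-out branch sets, and handling the corner cases of coincident or clustered endpoints. This step is exactly where internal $4$-edge-connectivity is indispensable — it is what upgrades the ``$3$'' available from $3$-edge-connectivity to the ``$4$'' demanded by the hub. A secondary difficulty is that the branch sets of a grid \emph{minor} may be large and may themselves contain the high-degree vertices of $G$, so the ``grid routes everything'' steps must be executed in $G$ with edge-disjointness maintained through the branch sets (or one first passes to a cleaner wall-type structure); pinning down how large $k$, hence $c$, must be in order to survive the bounded damage is then routine bookkeeping that produces the absolute constant.
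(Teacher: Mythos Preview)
Your plan is sound and your Menger step using internal $4$-edge-connectivity is exactly the right way to get four edge-disjoint spokes from a high-degree vertex into a large wall; the acknowledged technicalities (cleaning the grid minor to a wall, handling the case $v\in V(\Gamma)$ by excising a bounded neighbourhood and reapplying Menger in $G$, spreading clustered endpoints at the cost of $O(1)$ wall edges, and then routing the rim through four well-separated branch vertices) are all standard and resolvable, so the sketch would go through to a proof with an absolute---but enormous---constant.

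This is, however, a completely different route from the one the paper takes. The paper does not reprove Theorem~\ref{belmonte-w4-intro} via the grid minor theorem at all. Instead it establishes the \emph{exact} structure theorem (Theorem~\ref{w4-thm}): every $3$-edge-connected, internally $4$-edge-connected non-cubic graph with no $W_4$ immersion lies in one of a handful of explicit families, each of which sausage-reduces to a graph on at most five vertices (or to $K_{2,3}$-like objects). The final Corollary then simply observes that each of these reduced graphs has tree-width at most~$3$, which pins down the optimal constant. The machinery behind that structure theorem is entirely different from yours: a structure theorem for rooted immersions of the graphs $D_m$ (Section~\ref{sec-dm}), then a rooted-$W_4$ theorem proved by induction along small edge-cuts, a computer search up to eight vertices, and Mader's splitting-off lemma to go from rooted to unrooted (Sections~\ref{sec-rw4}--\ref{sec-w4}).

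What each approach buys: yours is the natural soft argument---no new structure theory, no case analysis, no computation---and it delivers the qualitative statement in a page. It cannot, however, produce a usable constant (anything coming out of the grid theorem is astronomically larger than $3$), and it gives no description of the non-cubic exceptions. The paper's approach is far heavier, but it earns both the optimal constant and the full list of obstructions, which is what is needed for the downstream application to $K_{3,3}$.
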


This theorem effectively divides those graphs with no $W_4$ immersion into cubic graphs (that obviously cannot immerse $W_4$) and a controlled class with bounded tree-width.  However, the absolute constant coming from their proof is quite large.  As a corollary of our precise structure theorem, we show that the best possible value for this constant is three.  

Although the above theorem is stated under the assumptions that the graph $G$ is 3-edge-connected and internally 4-edge-connected, this theorem can be used to describe the structure of an arbitrary graph $G$ with no $W_4$ immersion.  Before explaining this, let us pause to introduce a useful definition:  For a graph $H$ and $X \subset V(H)$, we denote by $H.X$ the graph obtained from $H$ by identifying $X$ to a single vertex (followed by deletion of any loops created in this process).  Now, suppose that $H$ is a 3-edge-connected and internally 4-edge-connected graph and we are interested in determining whether an arbitrary graph $G$ has an $H$ immersion.  We show how to use the edge-connectivity properties of $H$ to obtain some stronger assumptions on the edge-connectivity of $G$. Let $G'$ be the graph obtained from $G$ by deleting every cut-edge.  This graph $G'$ satisfies $H\prec G$ if and only if $H \prec G'$, but every component of $G'$ is 2-edge-connected.  Next, suppose there is a component $K$ of $G'$ with a set $X \subset V(K)$ such that $d(X) = 2$.  Form a new graph $K'$ ($K''$) from $K.X$ ($K.(V(K) \setminus X)$) by suppressing the newly created degree two vertex.  Then modify the graph $G'$ by removing the component $K$ and then adding the components $K'$ and $K''$.  If $G''$ is the graph resulting from repeating this modification wherever applicable, then $G''$  will satisfy $H \prec G$ if and only if $H \prec G''$, but now every component of $G''$ is 3-edge-connected.  Finally, suppose that $K$ is a component of $G''$ with a set $X \subset V(K)$ satisfying $d(X) = 3$ and $|X|, |V(K) \setminus X| \ge 2$.  In this case we modify $G''$ by deleting $K$ and adding the components $K.X$ and $K.(V(K) \setminus X)$.  If this operation is repeated until no longer possible, the resulting graph $G'''$ will still satisfy $H \prec G$ if and only if $H \prec G'''$, however every component of $G'''$ will be 3-edge-connected and internally 4-edge-connected.  Since it is generally easier to state our results under some connectivity assumptions, we will continue to do so.  However in all cases our structure theorems may be applied by this reasoning to arbitrary graphs.

To be able to state our theorem on the structure of graphs excluding $W_4$ immersion, we need to introduce a reduction that features in our result. We call a graph $H$ a \emph{doubled cycle} \emph{(doubled path)} if it can be obtained from a cycle (path) by adding a second copy of each edge.  

\begin{definition}
	\normalfont
	Let $G$ be a graph and let $X \subset V(G)$ satisfy $ |X| =k \ge 2$. We say $G[X]$ is a {\it chain of sausages of order} $k$ in $G$ if $G. (V(G)\setminus X )$ is a doubled cycle (of length $k+1$).  If $G[X]$ is a chain of sausages of order at least 3 and $x,x' \in X$ are adjacent, then the operation of identifying $x$ and $x'$ to a new vertex is called a \emph{sausage shortening}.  If $G'$ is obtained from $G$ by repeatedly performing sausage shortenings until this operation is no longer possible, we call $G'$ a \emph{sausage reduction} of $G$.  We say that $G$ is \emph{sausage reduced} if it has no sausage of order at least 3 (so no sausage shortening is possible).  
\end{definition} 

\begin{figure}[htbp]
	\centering
	\includegraphics{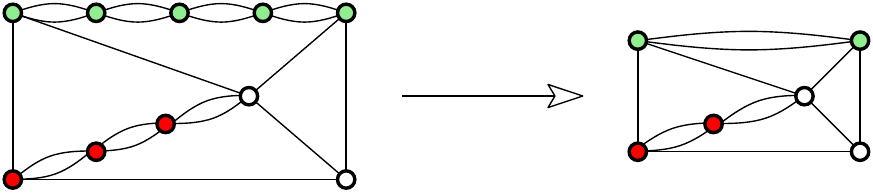}
	\caption{Sausage reduction}
	\label{fig:s-red}
\end{figure}
In our complete characterization of graphs excluding $W_4$ immersion, stated in Section \ref{sec-w4}, two sporadic (sausage reduced) graphs on five vertices appear. However, for sausage reduced graphs on at least six vertices our result asserts the following.

\begin{theorem}
	\label{w4-thm-intro}
	Let $G$ be a $3$-edge-connected, and internally $4$-edge-connected graph which is sausage reduced. If $|V(G)|\ge 6$, then $G\nsucc W_4$ if and only if $G$ is cubic.
\end{theorem}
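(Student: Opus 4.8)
The forward implication is immediate. In any weak immersion $\phi$ of $W_4$ into $G$, the four spoke paths emanating from the image of the hub are pairwise edge-disjoint and each has length at least one (the terminals are distinct), so the image of the hub meets at least four distinct edges of $G$; hence $G$ has a vertex of degree at least $4$ and is not cubic. For the converse, suppose the statement fails and choose a counterexample $G$ — a $3$-edge-connected, internally $4$-edge-connected, sausage reduced graph on at least six vertices that is not cubic and satisfies $G\nsucc W_4$ — minimizing $|V(G)|+|E(G)|$. Since $G$ is $3$-edge-connected it has minimum degree at least $3$, and since $G$ is not cubic it has a vertex $r$ with $d_G(r)\ge 4$. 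Following the paper's emphasis on rooted immersions, it is cleaner to run the induction against the stronger statement: in a non-cubic graph satisfying the hypotheses, every prescribed vertex $r$ is a terminal of some $W_4$ immersion (the hub when $d_G(r)\ge 4$, a rim vertex otherwise). I phrase the outline in these terms.

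The heart of the argument is to show that $G$ admits no \emph{reducible configuration}: no local operation yielding a strictly smaller graph that still satisfies all the hypotheses, retains a suitable root, and has the property that a rooted $W_4$ immersion in it pulls back to $G$. The main tool is Mader's splitting-off theorem: at a vertex $w\ne r$ with $d_G(w)\ge 4$ one may split off a pair of edges at $w$ without lowering the edge-connectivity between any pair of vertices other than $w$. Splitting off is itself a kind of immersion, and conversely reversing a split-off turns a $W_4$ immersion of the new graph into one of $G$ (reroute the single spoke or rim path through $w$ and shorten the resulting walk to a path), leaving the root and its degree untouched; so the reduced graph still has no rooted $W_4$ immersion. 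After such a split one restores the three normalizing properties exactly as in Section~\ref{sec-intro} and in the definition of sausage reduction — suppress any vertex of degree $2$, split along any $3$-edge-cut $\delta(X)$ with $|X|,|V\setminus X|\ge 2$, and perform sausage shortenings — each of which only shrinks the graph and none of which creates or destroys a rooted $W_4$ immersion, and one also notes that the graph stays non-cubic (the root keeps degree $\ge 4$). The outcome contradicts minimality, unless the graph has dropped below six vertices, a case treated directly. Iterating, we may assume $G$ is \emph{irreducible}: every vertex other than $r$ has degree $3$, $r$ has degree $4$ or $5$ (a root of degree $\ge 6$ can be split down while remaining a valid root), $G$ has no $3$-edge-cut with both sides of size $\ge 2$, and $G$ has no sausage of order $\ge 3$.

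It then remains to exhibit a $W_4$ immersion rooted at $r$ in such an irreducible $G$ — a graph which is cubic apart from the single vertex $r$, is internally $4$-edge-connected, has at least six vertices, and has no sausage of order $\ge 3$. Route the four spokes as four edge-disjoint paths leaving $r$ (available since $d_G(r)\ge 4$), and then, using internal $4$-edge-connectivity together with Menger/Mader-type path packing in the essentially cubic remainder, find a $4$-cycle on the far endpoints of the spokes realized by four more edge-disjoint paths. Here the hypotheses $|V(G)|\ge 6$ and sausage-reducedness are exactly what kill the obstructions: on five vertices there are the two sporadic exceptions enumerated in Section~\ref{sec-w4}, and the doubled-cycle examples are precisely the non-cubic internally $4$-edge-connected graphs without a $W_4$ immersion that survive only because they collapse under sausage shortening; forbidding sausages of order $\ge 3$ and demanding at least six vertices forces the cubic remainder to be spread out enough for the required $4$-cycle to be routed. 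This yields $G\succ W_4$, the final contradiction.

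The main obstacle is not any single step but the bookkeeping in the reduction phase: the split-off and the three normalizing operations can each recreate the situations addressed by the others (a split may produce a degree-$2$ vertex, a new small edge cut, or a short sausage), so one must package them into a provably terminating procedure that simultaneously preserves $3$-edge-connectivity, internal $4$-edge-connectivity, sausage-reducedness, a root of degree $\ge 4$, and — until the very last moment — at least six vertices. The secondary difficulty is the endgame: checking by hand that the smallest irreducible graphs really do immerse $W_4$, and cleanly separating out the genuine five-vertex exceptions.
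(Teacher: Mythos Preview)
Your route differs from the paper's. The paper does not prove Theorem~\ref{w4-thm-intro} directly; it is read off from the full classification Theorem~\ref{w4-thm}, whose proof leans on the rooted $W_4$ theorem (Theorem~\ref{rw4-thm}), which in turn rests on the $D_m$ structure theorem (Theorem~\ref{Dm-thm}). You attempt to bypass this chain by Mader-reducing to a graph that is cubic apart from a single root and then constructing the immersion by hand. That is a reasonable plan, but the proposal does not carry it out.

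The genuine gap is the endgame. The sentence ``using internal $4$-edge-connectivity together with Menger/Mader-type path packing in the essentially cubic remainder, find a $4$-cycle on the far endpoints of the spokes realized by four more edge-disjoint paths'' is not an argument; it is a restatement of the goal. Once the four spokes are routed and $d(r)=4$, every edge at $r$ is used, so the rim cycle must live in $G-r$; each spoke endpoint has degree $3$ in $G$ and hence only two edges left, and you must thread a $4$-cycle through four prescribed degree-$2$ vertices in an essentially cubic graph. No off-the-shelf Menger or Mader statement delivers such a cycle, and internal $4$-edge-connectivity by itself does not either --- indeed, the Type~1, Type~2 and Type~3 obstructions of Theorem~\ref{rw4-thm} are precisely internally $4$-edge-connected graphs, cubic away from the root, in which this routing fails. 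The paper resolves exactly this difficulty via Theorem~\ref{Dm-thm}: it finds a rooted $D_4$ across a well-chosen $4$-cut and then a rooted $D_2$ on the far side (Lemma~\ref{w4-4ecut-Xc-sausage}), and this is what lets one certify either a $W_4$ immersion or a sausage structure. Your outline supplies nothing in place of that step. A secondary gap is the reduction phase: you note that the split plus normalizations may drop the graph below six vertices and say this is ``a case treated directly'', but you do not treat it; in the paper this is where the small sporadic obstructions are checked against, using the rooted classification and the computer search of Lemma~\ref{w4-code}.
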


There are two aspects to our proof that are especially valuable.  First, we make heavy use of structure theorems for rooted immersions of certain smaller graphs.  These theorems about rooted immersions allow us to test for the presence of a $W_4$ immersion that splits in a convenient way over a small edge cut.  Second, to handle all of the exceptions that appear on small numbers of vertices, we use a computer.  For our rooted $W_4$ theorem we did an exhaustive search of all graphs on up to 8 vertices.  This allows us to work at a level above all of the sporadic exceptions in our proof, thus greatly simplifying the argument.

The rest of this paper is organized as follows: In Section \ref{sec-dm}, we will see a result which characterizes the graphs which do not contain certain rooted graphs on four vertices as immersion.  As a corollary we obtain structure theorems for rooted immersions of $K_4$ with 0, 1, and 2 roots.  
 Section \ref{sec-rw4} is devoted to a structural theorem for graphs which exclude rooted $W_4$ as immersion, and finally in Section \ref{sec-w4}, we obtain a structural theorem for graphs which do not immerse $W_4$.

\section{Forbidding a $D_m$ immersion}
\label{sec-dm}

In this section we will prove a structure theorem for graphs not containing a certain rooted graph on four vertices called $D_m$.  This will be a key tool in establishing the structure of graphs with no $W_4$ immersion.  

%
\subsection{Statement of the main theorem}
Let us start by precisely defining a rooted graph, and the corresponding notion of immersion for rooted graphs. A {\emph {rooted graph}} is a connected graph $G$ equipped with an ordered tuple $(x_1,  \ldots , x_k)$ of distinct vertices. If $H$ together with $(y_1, \ldots, y_k)$ is another rooted graph, we say $G$ contains $H$ as a {\it rooted immersion} if there is a sequence of splits and deletions which transforms $G$ into a graph isomorphic to $H$, where this isomorphism sends $x_i$ to $y_i$, for $i=1,\ldots, k$, denoted $(G; x_1, \ldots ,x_k) \succ_r (H; y_1, \ldots , y_k)$. For the sake of simplicity, if $k= 2$ and there is an automorphism of $H$ which sends $y_1$ to $y_2$, we simply refer to $H$ as a rooted graph with roots $y_1, y_2$. For clarity, in our figures the roots are always solid while other vertices are open. 

The family of the rooted graphs concerned in this section is introduced below. For $u, v$ distinct vertices of a graph $G$, we let $E_G(u, v)$, $e_G(u,v)$ denote the set of edges between $u, v$, and the size of this set, respectively. 
For $m\ge 2$, let $D_m$ denote the graph with roots $x_0, x_1$ where $e(x_0, x_1) = m-2$, and $D_m\setminus E(x_0, x_1)$ is isomorphic to the rooted graph below.
\begin{figure}[htbp]
\centering
\includegraphics{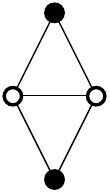}
\caption{Graph $D_2$}
\end{figure}

Observe that $D_3$ is isomorphic with $K_4$ with two roots. Accordingly, our result on excluding a $D_m$ immersion can be used to characterize (unrooted) graphs without $K_4$ immersion. This is done in Section \ref{sec-k4}. Next, we will introduce obstructions to immersion of $D_m$. To do so, we start by describing a very particular structure consisting of small nested edge-cuts.

\begin{definition}
	\normalfont
	If $G$ is a graph and $X,Y \subseteq V(G)$ are disjoint, a \emph{segmentation} of $G$ \emph{relative to} $(X,Y)$ is a family of nested sets $X_1 \subset X_2 \subset X_3 \ldots \subset X_k$ satisfying:
	\begin{itemize}
		\item $X_1 = X$ and $V(G) \setminus X_k = Y$
		\item $|X_{i+1} \setminus X_i| = 1$ for $1 \le i \le k-1$
	\end{itemize}
We say that the segmentation has \emph{width} $k$ if $d(X_i) = k$ holds for every $1 \le i \le k$ and we call it an $(a,b)$-\emph{segmentation} if $|X| \le a$ and $|Y| \le b$.
\begin{figure}[htbp]
	\centering
	\includegraphics{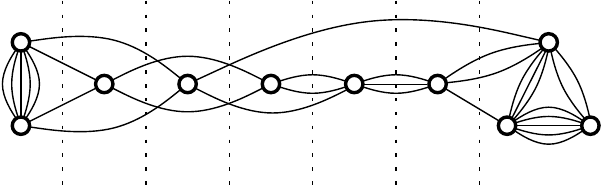}
	\caption{A graph with a $(2, 3)$-segmentation of width four}
	\label{fig:segmentation-ex}
\end{figure}
We refer to $X$ as {\it head}, and to $Y$ as {\it tail} of the segmentation.
\end{definition}

We now introduce two families of graphs which do not immerse $D_m$. Let $G$ be a graph with two roots $x_0, x_1$. If $C$ is a component of $G\setminus \{x_0, x_1\}$, we call $G[C \cup \{x_0, x_1\}] \setminus E(x_0, x_1)$ a \emph{lobe} of $G$. We then say $G$ has
\begin{description}
	\item[Type $A_m$.] if it has a segmentation of width $m$ relative to some $(X_0, X_1)$ with $|X_i|\le 2$ and $x_i \in X_i$, for  $i=0, 1$.
	\item[Type $B_m$.] if $G$ satisfies the following:
	\begin{itemize}
		\item
		Every lobe $L$ of $G$ consists of an $x_0-x_1$-path, in which there is exactly $n_L$ copies of each edge, for some $n_L \ge 2$.
		\item
		If $|V(L)|\ge 4$, we have $n_L=2$ (so $L$ is a doubled $x_0-x_1$ path)
		\item
		$e_G(x_0, x_1) + \sum_{L} n_L = m+1$, where the sum is taken over all lobes $L$ of $G$.
	\end{itemize}
\end{description}

To be able to state our main result, we need one more ingredient to capture a more refined notion of edge-connectivity for graphs of our interest in this section (which have two roots).
\begin{definition}
	\normalfont
	Let $G$ be a connected graph with roots $x_0, x_1$.
	\begin{itemize}
		\item $\lambda_s (G)$ ($\lambda^i_s (G)$) denotes
		the minimum size of an (internal) edge-cut separating $x_0$ and $x_1$.
		\item
		$\lambda_n (G)$ ($\lambda^i_n (G)$) denotes the minimum size of an (internal) edge-cut not separating $x_0$ and $x_1$. 
	\end{itemize}
\end{definition}

 We are now all set to state our main theorem concerning the structure of graphs excluding $D_m$ immersion, for any $m\ge 2$.

\begin{theorem}
	\label{Dm-thm}
	Let $m \ge 2$, and let $G$ be a rooted graph with roots $x_0, x_1$, where $|V(G)| \ge 4$.  Assume further that $\lambda_n (G)\ge 3$, $\lambda_n^i (G)\ge 4$, and $\lambda_s(G) \ge m$. 
	Then $G\nsucc_r D_m$ if and only if $G$ has type $A_m$ or type $B_m$.
\end{theorem}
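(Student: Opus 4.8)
The plan is to prove the theorem by induction on $|V(G)|$, with the harder direction being that $G \nsucc_r D_m$ forces $G$ to have type $A_m$ or $B_m$ (the converse, that types $A_m$ and $B_m$ genuinely exclude a $D_m$ immersion, I would verify directly by a counting argument on edge-cuts, as sketched below). For the main direction, I would first dispose of small cases: when $|V(G)|$ is at most some small bound (certainly $|V(G)| \le 8$, matching the computer search alluded to in the introduction), the statement is checked exhaustively. So assume $|V(G)|$ is large and that the theorem holds for all smaller rooted graphs satisfying the connectivity hypotheses.

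The core of the argument is a case analysis driven by the refined connectivity parameters $\lambda_s, \lambda_n, \lambda_s^i, \lambda_n^i$. First I would handle the case where there is a vertex $v \notin \{x_0,x_1\}$ that is a cut-vertex or, more generally, where $G \setminus \{x_0,x_1\}$ is disconnected — this is where the lobe structure in type $B_m$ comes from. If $G$ has several lobes, then since $\lambda_s(G) \ge m$ the total edge-multiplicity across any $x_0$--$x_1$ separation is at least $m$; combined with $\lambda_n^i(G) \ge 4$ and $\lambda_n(G) \ge 3$, each lobe is forced to be highly constrained, and one shows each lobe must itself be a doubled path (or a short multi-edge path) — otherwise a $D_m$ immersion can be assembled using $D_m$'s structure as a diamond plus $m-2$ parallel $x_0x_1$ edges routed through the lobes. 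This yields type $B_m$. Second, assuming $G \setminus \{x_0,x_1\}$ is connected, I would look for a tight edge-cut $X$ with $d(X) = \lambda_s(G) = m$ separating $x_0$ from $x_1$; if such a cut is "nontrivial" on both sides, I would try to contract or reduce one side and apply induction, showing the reduced graph still excludes $D_m$ and hence has type $A_m$ or $B_m$, then lift this back. The nested-cut bookkeeping here is exactly what the segmentation definition is built to record: iterating minimum $x_0$--$x_1$-cuts of size $m$ produces the chain $X_1 \subset \cdots \subset X_k$ with $|X_{i+1}\setminus X_i| = 1$, giving type $A_m$. The remaining case is that $G$ is "internally highly connected" with no small separations of either type beyond the forced minimum — here I would argue directly that such a graph, if large enough, must immerse $D_m$, contradicting the hypothesis; the mechanism is that with $\lambda_n^i \ge 4$ one can find four edge-disjoint paths among any four chosen terminals, and with $\lambda_s \ge m$ enough parallel capacity between $x_0$ and $x_1$, which together route a subdivision-like structure realizing $D_m$.

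For the converse direction I would verify that neither type $A_m$ nor type $B_m$ immerses $D_m$. For type $B_m$ this is essentially a degree/capacity count: a $D_m$ immersion needs a vertex (the image of the non-root degree-4 vertex of the diamond in $D_m\setminus E(x_0,x_1)$) with enough edge-disjoint paths to $x_0$, $x_1$, and back, but the lobe-decomposition with $e_G(x_0,x_1) + \sum_L n_L = m+1$ leaves exactly one unit of slack, which is not enough to accommodate the extra structure of the diamond on top of the $m-2$ parallel edges. For type $A_m$ the width-$m$ segmentation means every $X_i$ is an edge-cut of size exactly $m$ separating $x_0$ from $x_1$; an immersion of $D_m$ would have to push all $m$ of its $x_0$--$x_1$ "strands" (the $m-2$ parallel edges plus two more through the diamond) across each such cut, but the constraint $|X_{i+1}\setminus X_i|=1$ together with $|X_0|,|X_1|\le 2$ prevents the fourth vertex of $D_m$ from being placed consistently.

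The step I expect to be the main obstacle is the reduction/induction bookkeeping in the connected case with a tight $x_0$--$x_1$-cut: one must carefully choose which side to reduce, check that all four connectivity parameters are preserved (or appropriately adjusted) in the reduced graph, and — most delicately — verify that a $D_{m'}$-type structure in the reduced graph (possibly with a different value $m'$ reflecting the capacity of the contracted side) lifts back to a genuine $D_m$ immersion in $G$. Getting the inductive parameter $m$ to change correctly as edge-cuts are absorbed, while keeping the head/tail size bounds $|X_i| \le 2$ intact through the reduction, is where the argument is most likely to require care and where sporadic small exceptions (handled by the computer search) naturally arise.
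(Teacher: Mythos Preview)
Your proposal has a genuine gap in the ``highly connected'' case, and a minor misreading of the paper. On the latter: the computer search in the introduction is for the rooted $W_4$ theorem, not for this $D_m$ result; the paper's base case here is $|V(G)|=4$, handled entirely by hand (Lemma~\ref{atleast4}).

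The substantive issue is that your trichotomy (lobes / tight separating cut / highly connected) puts all the real content into the third case, and your sketch for it does not work: the hypothesis $\lambda_n^i \ge 4$ does not yield four edge-disjoint paths among four arbitrary terminals, and there is no evident way to glue the separating and non-separating connectivity into a $D_m$ immersion. The paper's mechanism is quite different. After establishing $\lambda_s^i(G) \ge m+1$ in a minimum counterexample (by the contraction-across-a-tight-cut argument you describe), it proceeds not by further contractions but by \emph{edge deletion}: pick a ``safe'' edge $e$ between two non-root vertices, delete it, suppress degree-$2$ vertices, and apply induction to the resulting $G'$. The work is (i) showing $G'$ still has $\lambda_n^i \ge 4$, which requires a preparatory lemma ruling out certain $4$-edge-cuts near the roots, and (ii) a parity argument: one proves every non-root vertex of a minimum counterexample has \emph{odd} degree and both roots have degree exactly $m$. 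Since non-root vertices in a type-$B_m$ graph have even degree, $G'$ is forced to be type $A_m$, and the resulting width-$m$ segmentation then places an odd-degree vertex in the head, forcing too many edges between that vertex and the adjacent root, contradicting an earlier step. Your contraction-based plan does not generate these parity constraints, and without them it is unclear how you would ever rule out large counterexamples with no exploitable cut.
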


\subsection{Proof of the `if' direction}
\label{sec-pf-Dm-thm}
In this subsection, we will see the proof of the easier direction, `if' direction of Theorem \ref{Dm-thm}. The following simple observation is a helpful tool in doing so.
\begin{observation}
	Let $G$ be a graph with roots $x_0, x_1$ with $G \succ_r D_m$.
	\begin{enumerate}
		\item 
		\label{seg-m}
		Suppose $G$ has a segmentation $X_0\subset X_1\subset \ldots \subset X_k$ of width $m$ so that $x_0 \in X_0$ and $x_1 \notin X_k$. If $|X_0|\le 2$, then $T\cap X_k = \{x_0\}$.
		\item 
		\label{parity-big-target}
		If $d(x_0)$ and $m$ have different parity, then an edge incident with $x_0$ can be deleted while preserving an immersion of $D_m$.
	\end{enumerate}
\end{observation}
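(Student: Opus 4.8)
The plan is to prove both parts with a single elementary tool. Fix an immersion $\phi$ of $D_m$ in $G$ with terminal set $T$; since in a rooted immersion the roots of $D_m$ are sent to the roots of $G$, we have $T=\{x_0,x_1,\phi(u),\phi(v)\}$, where $u,v$ are the two non-root vertices of $D_m$. Recall that $D_m$ has edge set $\{x_0u,x_0v,x_1u,x_1v,uv\}$ together with $m-2$ parallel copies of $x_0x_1$, so $d_{D_m}(x_0)=d_{D_m}(x_1)=m$ and $d_{D_m}(u)=d_{D_m}(v)=3$. The one fact I will use repeatedly is the standard observation that immersions respect edge cuts: for any $W\subseteq V(G)$, writing $S=\{w\in V(D_m):\phi(w)\in W\}$, every edge of $D_m$ in $\delta_{D_m}(S)$ is routed by $\phi$ along a path crossing $\delta_G(W)$, and these routing paths are pairwise edge-disjoint, so $d_G(W)\ge d_{D_m}(S)$. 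A direct count gives $d_{D_m}(\{x_0\})=d_{D_m}(\{x_0,u,v\})=m$ and $d_{D_m}(\{x_0,u\})=d_{D_m}(\{x_0,v\})=m+1$.

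For part~(1): for each $i$ we have $x_0\in X_0\subseteq X_i$ and $x_1\notin X_k\supseteq X_i$, so the set $S_i=\{w:\phi(w)\in X_i\}$ satisfies $x_0\in S_i\subseteq\{x_0,u,v\}$, hence $S_i\in\{\{x_0\},\{x_0,u\},\{x_0,v\},\{x_0,u,v\}\}$. Since the segmentation has width $m$ we have $d_G(X_i)=m$, so the cut bound above excludes $S_i=\{x_0,u\}$ and $S_i=\{x_0,v\}$; thus $S_i\in\{\{x_0\},\{x_0,u,v\}\}$ for every $i$. As $|X_0|\le 2$ cannot contain the three distinct vertices $x_0,\phi(u),\phi(v)$, this gives $S_0=\{x_0\}$. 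Finally $S_i\subseteq S_{i+1}$ while $|X_{i+1}\setminus X_i|=1$, so no single step can add both $\phi(u)$ and $\phi(v)$; starting from $S_0=\{x_0\}$ an induction then forces $S_i=\{x_0\}$ for all $i$, and in particular $T\cap X_k=\{x_0\}$.

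For part~(2): count the edges of $G$ incident with $x_0$ that $\phi$ uses. Each of the $m$ edges of $D_m$ at $x_0$ is routed by a path having $x_0$ as an endpoint, which (being a path) uses exactly one edge at $x_0$; each of the remaining edges of $D_m$ is routed by a path that either misses $x_0$ or passes once through it, using $0$ or $2$ edges at $x_0$. Since the routing paths are edge-disjoint, the number of edges of $G$ at $x_0$ used by $\phi$ equals $m+2t$ for some integer $t\ge 0$, hence is congruent to $m$ modulo $2$. If $d_G(x_0)\not\equiv m\pmod 2$ then $d_G(x_0)>m+2t$, so some edge $e$ incident with $x_0$ is not used by $\phi$, and therefore $\phi$ is still an immersion of $D_m$ (rooted at $x_0,x_1$) in $G-e$.

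There is no genuinely hard step here; the only point requiring a little care is the monotonicity argument in part~(1), namely ensuring that the chain $(S_i)$ cannot jump from $\{x_0\}$ to $\{x_0,u,v\}$ without first passing through one of the intermediate cuts $\{x_0,u\}$ or $\{x_0,v\}$, which the width-$m$ hypothesis has already ruled out.
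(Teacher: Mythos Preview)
Your proof is correct and follows essentially the same approach as the paper. The paper dispatches part~(1) in one line by invoking $\lambda_s^i(D_m)\ge m+1$, which is precisely your computation that $d_{D_m}(\{x_0,u\})=d_{D_m}(\{x_0,v\})=m+1$; your explicit monotonicity argument (that the $S_i$ cannot jump from $\{x_0\}$ to $\{x_0,u,v\}$ across a single-vertex increment) fills in what the paper leaves implicit, and your parity count for part~(2) is exactly the ``immediate consequence of our definitions'' the paper alludes to.
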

\begin{proof}
	The fist part follows from $\lambda_s^i (D_m) \ge m+1$, and the second part is an immediate consequence of our definitions.
\end{proof}
\begin{proof} [Proof of the `if' direction of Theorem \ref{Dm-thm}]
	It follows immediately from part \ref{seg-m} of the previous observation that a graph of type $A_m$ does not immerse $D_m$. Suppose $G$ is type $B_m$. If $m=2$, observe that $e_G(x_0, x_1) =1$ and $G\setminus E(x_0, x_1)$ is a doubled $x_0-x_1$-path. Then part \ref{parity-big-target} of the above observation implies that $G \succ_r D_2$ iff a graph $G'$ obtained from $G$ by deleting an edge incident with $x_0$ immerses $D_2$. 
	However, every such a graph $G'$ either has type $A_2$ or $\lambda_n^i (G') <4$, so $D_2$ is not immersed in $G'$.
	
	Now consider $m\ge 3$. If $e(x_0, x_1)>0$, let $G'=G- x_0x_1$, and observe that $G\succ_r D_m$ iff $G'\succ_r D_{m-1}$. However, since $G'$ is type $B_{m-1}$, by induction, we have $G'\nsucc_r D_{m-1}$. If $e(x_0, x_1) =0$, then $G$ immerses $D_m$ iff the graph $G'$ resulting by splitting off an $x_0x_1$-path in a lobe $L$ of $G$ immerses $D_m$. However, then $G'$ has type $B_m$ with an edge between $x_0, x_1$, so $D_m\nprec G'$.
\end{proof}
\subsection{Graphs on four vertices}
As the reader may expect, the proof of the reverse direction is more involved. We first prove the theorem for four-vertex graphs.
\begin{lemma}
	\label{atleast4}
	Theorem \ref{Dm-thm} holds under the added assumption $|V(G)| = 4$.
\end{lemma}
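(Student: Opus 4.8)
The \emph{if} direction is the four-vertex case of a statement already proved, so I need only show that if $G$ --- a four-vertex rooted graph on $\{x_0,x_1,a,b\}$ with roots $x_0,x_1$, satisfying $\lambda_n(G)\ge 3$, $\lambda^i_n(G)\ge 4$ and $\lambda_s(G)\ge m$ --- immerses no $D_m$, then $G$ has type $A_m$ or type $B_m$; I would argue the contrapositive. Write $p,q,r,s,t,u$ for the numbers of edges $x_0x_1$, $x_0a$, $x_0b$, $x_1a$, $x_1b$, $ab$. The only edge-cuts of a four-vertex graph are $\delta(x_0),\delta(x_1),\delta(a),\delta(b)$ together with the three cuts $\delta(\{a,b\}),\delta(\{x_0,a\}),\delta(\{x_0,b\})$ having two vertices on each side; of these, $\delta(x_0),\delta(x_1),\delta(\{x_0,a\}),\delta(\{x_0,b\})$ separate $x_0$ from $x_1$ and the others do not, so the hypotheses become $d(a),d(b)\ge 3$, $d(\{a,b\})\ge 4$, and $d(x_0),d(x_1),d(\{x_0,a\}),d(\{x_0,b\})\ge m$. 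If $p>0$ then $G\succ_r D_m$ iff $G-x_0x_1\succ_r D_{m-1}$ (the argument already used for the \emph{if} direction), and $G-x_0x_1$ still meets the hypotheses for $m-1$ while having type $A_{m-1}$ or $B_{m-1}$ precisely when $G$ has type $A_m$ or $B_m$; so, inducting on $m$ with base $m=2$, I may assume either $m=2$ or $p=0$, and in the latter case $d(\{a,b\})=d(x_0)+d(x_1)$.

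Since $D_m$ also has four vertices, a rooted immersion is nothing but a bijection $V(D_m)\to V(G)$ fixing $x_0,x_1$ and sending the two non-root vertices of $D_m$ to $a,b$; hence $G\succ_r D_m$ iff $G$ carries a system of pairwise edge-disjoint paths consisting of $m-2$ $x_0x_1$-paths together with an $x_0a$-, an $x_0b$-, an $x_1a$-, an $x_1b$- and an $ab$-path, with $a,b$ playing the roles of the two non-root vertices in either order. Counting, for each vertex bipartition, how many of these demand paths are forced to cross it regardless of that order yields the necessary conditions $d(x_0),d(x_1)\ge m$, $d(a),d(b)\ge 3$, $d(\{a,b\})\ge 4$, and --- the decisive extra ones --- $d(\{x_0,a\})\ge m+1$ and $d(\{x_0,b\})\ge m+1$. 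If $d(\{x_0,a\})=m$ (or $d(\{x_0,b\})=m$), then using $\{x_0,a\}$ (or $\{x_0,b\}$) as head and its complement as tail exhibits a length-one, width-$m$ segmentation, so $G$ has type $A_m$; hence I may assume in addition $d(\{x_0,a\}),d(\{x_0,b\})\ge m+1$.

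It then remains to show that in this regime the paths can always be routed unless $G$ has type $B_m$. I would use an extremal argument: choose $G$ with as few edges as possible among four-vertex rooted graphs meeting all constraints collected so far and having neither type $A_m$ nor type $B_m$. For every edge $e$, the graph $G-e$ must violate one of these constraints, so (using the standing assumptions) either $e$ lies in a cut already tight at its bound --- one of $d(a)=3$, $d(b)=3$, $d(\{a,b\})=4$, $d(x_0)=m$, $d(x_1)=m$, $d(\{x_0,a\})=m+1$, $d(\{x_0,b\})=m+1$ --- or $G-e$ has type $A_m$ or $B_m$. These conditions pin the multiplicities $q,r,s,t,u$ (and $p$, in the base case $m=2$) down to a short explicit list, and for each entry one either writes down the required edge-disjoint paths outright, or recognises the configuration as type $B_m$: for $p=0$ this is exactly $u=0$ with $e(x_0,a)=e(x_1,a)\ge 2$, $e(x_0,b)=e(x_1,b)\ge 2$ and $d(x_0)=d(x_1)=m+1$, while for $m=2$ the doubled $x_0x_1$-path through $a,b$ also occurs --- in either case a contradiction.

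The main obstacle I anticipate is this final step. In the fully tight regime the routing is badly over-constrained: a tight cut $\delta(\{a,b\})$ forces the four single-root paths to occupy all four of its edges, and this interacts with a tight cut $\delta(\{x_0,a\})$ through the parities of the crossings forced at each cut, producing a conflict. One must check that such conflicts occur for precisely the type-$B_m$ configurations and for no others, which I expect to require a careful by-hand examination of the few extremal graphs, tracking at each tight cut the parity of the number of demand paths crossing it --- exactly the bookkeeping streamlined by the parity part of the observation above.
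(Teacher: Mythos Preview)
Your outline is sound and takes a genuinely different route from the paper. The paper does not induct on $m$: it keeps all $x_0x_1$ edges, first reduces to $\lambda_s^i(G)\ge m+1$ (exactly your condition $d(\{x_0,a\}),d(\{x_0,b\})\ge m+1$), and then case-splits on the shape of the underlying simple graph $G^*$ obtained from $G$ by deleting $x_0x_1$. This yields three configurations for $G^*$; in each, the paper classifies $x_0$--$x_1$ paths into two or three types, fixes a maximum packing of such paths, and argues from the leftover ``extra'' edges together with the cut bounds. Your reduction to $p=0$ via the equivalence $G\succ_r D_m \Leftrightarrow G-x_0x_1\succ_r D_{m-1}$ (which the paper also invokes, in the `if' direction) is clean and shrinks the parameter space by one, and your identification of the four-vertex type-$B_m$ shapes is correct.

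However, the step you flag as ``the main obstacle'' is where essentially all of the content lives, and your sketch does not carry it out. Edge-minimality alone does not pin the multiplicities to a short list: the alternative ``$G-e$ has type $B_m$'' is a simultaneous constraint on all of $q,r,s,t,u$ rather than a single tight cut, so one must still intersect these possibilities across every edge class and then explicitly route the demand paths for each surviving configuration. Done honestly, this case analysis is of the same order as the paper's three-case argument; the paper's maximum path-packing is precisely the device that organizes the routing, and you would need an analogous bookkeeping mechanism. Your proposal is therefore a valid reorganization of the work rather than a shortcut past it, and to become a proof the final enumeration has to be actually performed.
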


\begin{proof}
	Assume that $V(G) = \{x_0,x_1,y_0,y_1\}$ with roots $x_0$ and $x_1$.  We may assume that $G$ satisfies $\lambda_s^i(G) \ge m+1$, as otherwise $G$ is type $A_m$ relative to $\{x_0, y_i\}$, $\{x_1, y_{1-i} \}$ for some $i=0,1$.  Let $G^*$ be the graph obtained from the simple graph underlying $G$ by deleting the edge $x_0 x_1$ if it is present.  First suppose that $|E(G^*)| = 5$.  In this case $G$ has a subgraph $H$ isomorphic to $D_2$ in which $d_H(x_i) = 2$ for $i=0,1$.  It follows from $\lambda_s(G) \ge m$ and $\lambda_s^i(G) \ge m+1$ that the graph $G' = G \setminus E(H)$ satisfies $\lambda_s(G') \ge m-2$, and thus $G$ has an immersion of $D_m$.
	
	Next suppose that $d_{G^*}(y_0) = d_{G^*}(y_1) = 1$.  In this case $G$ has one of the first two graphs (from the left) in Figure \ref{figy1x0} as a subgraph, so $D_m \prec G$.  Next suppose that $d_{G^*}(y_0) = 1$ and $d_{G^*}(y_1) \ge 2$.  If $y_0$ is adjacent to a root, say $x_0$, then $G$ immerses the middle graph in Figure \ref{figy1x0}; if $y_0$ is adjacent to $y_1$, then $G$ immerses the second graph from the right in Figure \ref{figy1x0}.  Since both of these graphs immerse $D_m$ we may assume $d_{G^*}(y_i) \ge 2$ for $i=0,1$.  If $d_{G^*}(x_0) = 0$ then $G$ must immerse the rightmost graph in Figure \ref{figy1x0}, so again we have $D_m \prec G$.  
	
	\begin{figure}[h]
		\centering
		\includegraphics[height=2.5cm]{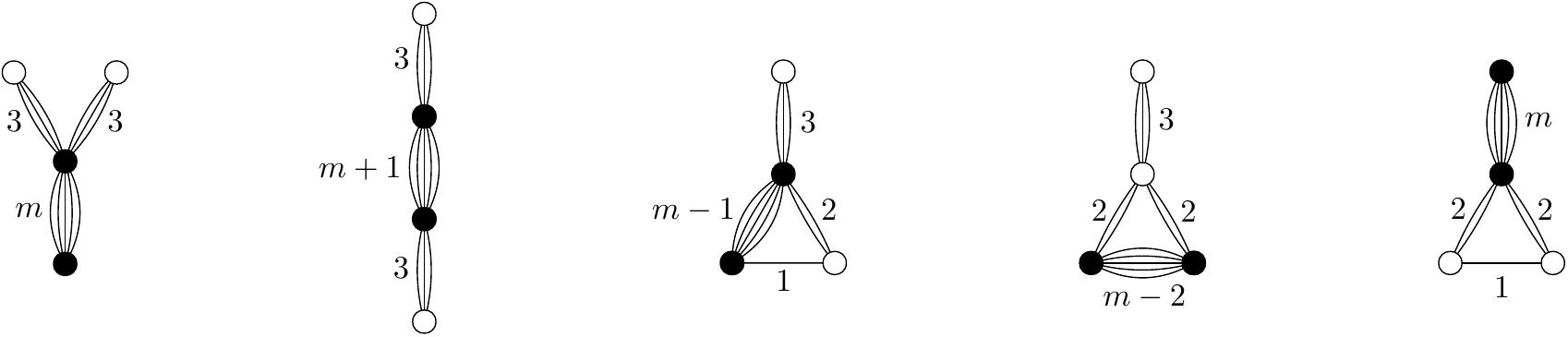}
		\caption{Immersions in $G$ when $d_{G^*}(y_i) = 1$ or $d_{G^*}(x_i) = 0$}
		\label{figy1x0}
	\end{figure}
	
	At this point, we have shown $d_{G^*}(y_i) \ge 2$ and $d_{G^*}(x_i) \ge 1$ for $i=0,1$ and there are just three possibilities for the graph $G^*$ (up to interchanging the names of the roots $x_0$, $x_1$ and the names of the non-roots $y_0$ and $y_1$).  They are the three graphs shown in the Figure \ref{g_star_cases} and will be handled in separate cases.  
	
	\begin{figure}[h]
		\centering
		\includegraphics[height=2.8cm]{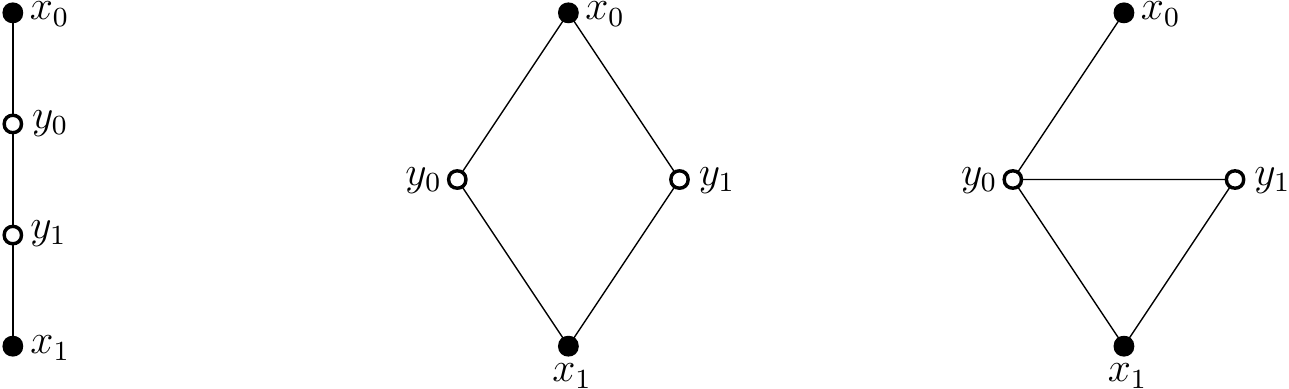}
		\caption{Possibilities for $G^*$}
		\label{g_star_cases}
	\end{figure}
	
	In all three of our cases, we shall classify the paths between $x_0$ and $x_1$ into a small number of types and these are indicated in Figure \ref{pathtypesdm}.  For instance, for the rightmost graph we say that a path is type $\alpha$ if it has vertex sequence $x_0,x_1$, type $\beta$ if it has vertex sequence $x_0, y_0, x_1$, and type $\gamma$ if it has vertex sequence $x_0, y_0, y_1, x_1$.  Now in all three cases, we choose a maximum cardinality packing of edge-disjoint paths from $x_0$ to $x_1$ say $P_1, P_2, \ldots, P_k$ and we let $a$ ($b$, $c$) denote the number of these paths of type $\alpha$ ($\beta$, $\gamma$).  Note that $\lambda_s(G) \ge m$ implies $k \ge m$.  An edge $e \in E(G) \setminus \left (\bigcup_{i=1}^k E(P_i) \right )$ is called an \emph{extra} edge.  Note that there are no extra edges of type $\alpha$ since this would contradict the maximality of our packing.

	\begin{figure}[h]
		\centering
		\includegraphics[height=3cm]{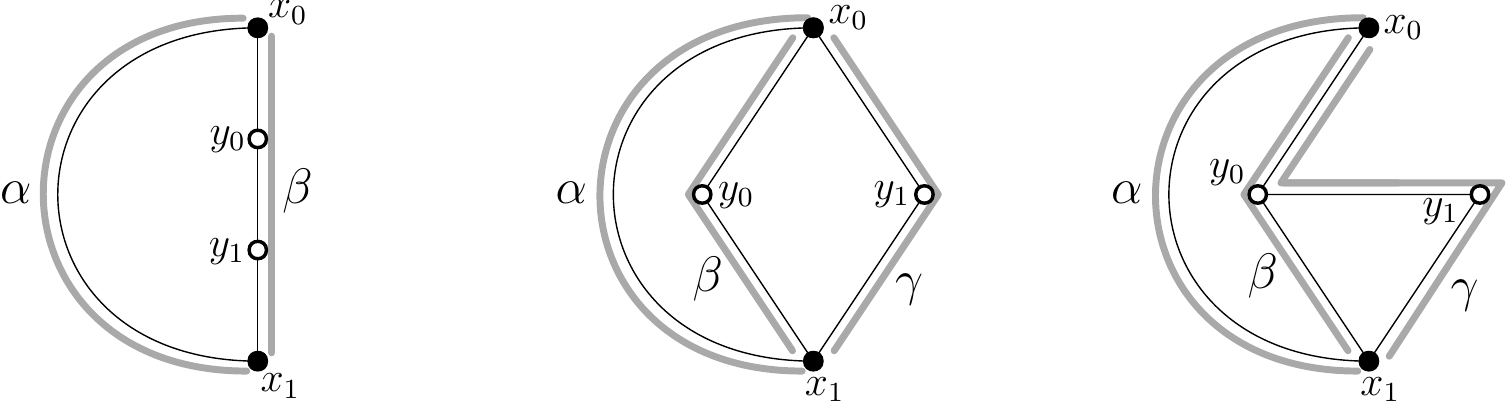}
		\caption{Types of Paths}
		\label{pathtypesdm}
	\end{figure}

	\bigskip
	
	\noindent{\it Case 1: } $G^*$ is the leftmost graph in Figure \ref{g_star_cases}
	
	\smallskip
	
	We note that $b \ge 1$ and split into subcases based on $b$.  First  suppose that $b=1$.  If $e(y_0, y_1) = 1$ then $d(y_0), d(y_1) \ge 3$ forces the existence of extra edges $x_0 y_0$ and $x_1 y_1$.  Now $\lambda_s^i(G) \ge m+1$ implies $a \ge m$ and we find $D_m \prec G$.  If $e(y_0, y_1) \ge 2$ then the maximality of our packing implies $\min\{ e(x_0, y_0), e(x_1, y_1) \} = 1$, but then $\lambda_n^i(G) \ge 4$ implies $\max\{ e(x_0, y_0), e(x_1, y_1) \} \ge 3$.  Since $a \ge m-1$ we again find that $D_m \prec G$.  Next suppose that $b = 2$.  If $a \ge m$ then we have $D_m \prec G$.  If $a = m-1$ then $G$ will be type $B_m$ if there are no extra edges, and $D_m \prec G$ if there is an extra edge.  If $a=m-2$ then $\lambda^i_s(G) \ge m+1$ implies that there is an extra edge $y_0 y_1$ and we have $D_m \prec G$.  Finally, we suppose $b \ge 3$.  If $k = a+b \ge m+1$ then $D_m \prec G$.  Otherwise $a+b = m$ and $\lambda_s^i(G) \ge m+1$ implies that there is an extra edge $y_0 y_1$ and again we have $D_m \prec G$.
	
	\bigskip
	
	\noindent{\it Case 2: } $G^*$ is the middle graph in Figure \ref{g_star_cases}
	
	\smallskip
	
	First note that $b,c \ge 1$.  If $k=m$ then $\lambda_s^i(G) \ge m+1$ implies that there is an extra edge of the form $x_0 y_0$ or of the form $x_1 y_1$, and similarly there is an extra edge of the form $x_0 y_1$ or of the form $x_1 y_0$.  We cannot have both $x_0 y_i$ and $y_i x_1$ as extra edges, since this would contradict the maximality of our packing, but then we have $D_m \prec G$.  Accordingly, we may now assume $k \ge m+1$.  Now we split into subcases depending on the values of $b,c$.  If $b=c  = 1$ then $d(y_i) \ge 3$ implies the existence of an extra edge incident with $y_i$ for $i=1,2$ and this gives $D_m \prec G$.  If $b=1$ and $c\ge 2$, then $d(y_0) \ge 3$ implies the existence of an extra edge of the form $x_0 y_0$ or $x_1 y_0$ and in either case we have $D_m \prec G$.  A similar argument handles the case $b \ge 2$ and $c=1$.  Finally, we consider the case $b,c \ge 2$.  If $k \ge m+2$ then $D_m \prec G$.  If $m=k+1$ then $G$ has type $B_m$ if there are no extra edges, and $D_m \prec G$ if there is an extra edge.
	
	\bigskip
	
	\noindent{\it Case 3: } $G^*$ is the rightmost graph in Figure \ref{g_star_cases}
	
	\smallskip
	
	First suppose that $e(x_0, y_0) = 1$ and note that this implies $a = e(x_0, x_1) \ge m-1$.  If $e(x_1, y_1) \ge 2$ then $D_m \prec G$.  Otherwise it follows from $\lambda_n(G) \ge 3$ that $e(x_1, y_0) , e(y_0, y_2) \ge 2$ and again  we get $G \succ D_m$.  Therefore, $e(x_0, y_0) \ge 2$ and we may assume (without loss) that in our maximum packing we have $b,c \ge 1$.  First suppose that $c=1$.  If there is an extra edge $y_0 y_1$ then (using $b \ge 1$) we find $D_m \prec G$, otherwise $d(y_1) \ge 2$ implies an extra edge of the form $y_1 x_1$.  This extra edge gives an immersion of $D_m$ unless $k = m$ so we may now assume this.  However, if $k=m$ it follows from $d( \{y_1, x_1 \}) \ge m+1$ that there exists an extra edge which forces an immersion of $D_m$.  So we may now assume $c \ge 2$ and then we are immediately finished if $k \ge m+1$ since a path of type $\gamma$ together with a $x_0x_1$ path may be traded for an edge between $x_0$ and $y_1$.  In the only remaining case $k=m$, and it follows from $d( \{y_1, x_1 \}) \ge m+1$ that there is an extra $x_1y_0$ edge which forces immersion of $D_m$.  This completes the proof.
\end{proof}
\subsection{Graphs on at least five vertices}
In this subsection, we prove the main result of this section. Let us start by making a couple observations about graph segmentations.
\begin{observation}
	\label{seg-obs}
	Suppose that a graph $G$ has a segmentation of width $k$ relative to $(X, Y)$.
	\begin{enumerate}
		\item
		\label{dx0m}
		If $|X| = 2$ and $x \in X$ has $d(x) = k$, then $G$ has a segmentation of width $k$ relative to $(\{x\}, Y)$.  
		\item 
		\label{no-ht-even}
		Every $v \in V(G) \setminus (X\cup Y)$ has $d(v)$ even.
	\end{enumerate}	
	
\end{observation}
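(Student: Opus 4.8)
Both claims are elementary and, I expect, present no real obstacle; the plan is to read each off the nested family directly. Write the given segmentation as $X_1 \subset X_2 \subset \cdots \subset X_\ell$, so that $X_1 = X$, $V(G)\setminus X_\ell = Y$, consecutive members differ by exactly one vertex, and $d(X_i)=k$ for every $i$.

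For part \ref{dx0m}, we have $X=\{x,x'\}$ with $d(x)=k$, and I would simply prepend the singleton $\{x\}$, forming the chain $\{x\}\subset X_1 \subset X_2 \subset \cdots \subset X_\ell$. This is again a strictly nested family (as $x'\ne x$); its head is $\{x\}$ and its tail is still $Y$, since the last set is unchanged; consecutive members differ by exactly one vertex, the new initial step adding $x'$; and every set in the chain has a cut of size $k$, since $d(X_i)=k$ for the original members while $d(\{x\})=d(x)=k$ for the new one (as $G$ is loopless, $d(\{x\})$ equals the degree of $x$). Finally $\{x\}$ and $Y$ are disjoint because $X$ and $Y$ are. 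Hence the chain is the desired width-$k$ segmentation relative to $(\{x\},Y)$.

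For part \ref{no-ht-even}, I would run the standard parity argument across a consecutive pair of cuts. Let $v\in V(G)\setminus(X\cup Y)$. Since $V(G)$ is the disjoint union of $X_\ell$ and $Y$ and $X_1=X$, we get $v\in X_\ell\setminus X_1$, so there is a unique index $i$ with $X_{i+1}=X_i\cup\{v\}$. Now classify the edges incident with $v$, and the edges of $\delta(X_i)$, according to whether the relevant endpoint lies in $X_i$, equals $v$, or lies outside $X_{i+1}$: letting $a$ be the number of edges from $v$ into $X_i$, $b$ the number from $v$ to $V(G)\setminus X_{i+1}$, and $c$ the number of edges between $X_i$ and $V(G)\setminus X_{i+1}$, one reads off $d(v)=a+b$, $d(X_i)=a+c$, and $d(X_{i+1})=b+c$, whence $d(X_i)+d(X_{i+1})=d(v)+2c$. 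Since $d(X_i)=d(X_{i+1})=k$, this gives $d(v)=2(k-c)$, which is even. The only point requiring any care is bookkeeping these three edge-classes correctly; everything else is immediate.
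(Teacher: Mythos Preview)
Your proof is correct and follows essentially the same approach as the paper's: prepending $\{x\}$ to the chain for part~\ref{dx0m}, and comparing two consecutive cuts $X_i, X_{i+1}$ for part~\ref{no-ht-even}. The only cosmetic difference is that the paper subtracts the two cut sizes to get $e(v,X_i)=e(v,V(G)\setminus X_{i+1})$ directly (hence $d(v)=2e(v,X_i)$), whereas you add them to get $d(v)=2(k-c)$; these are the same bookkeeping.
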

\begin{proof}
	Let $X \subset X_1 \subset \ldots \subset X_n$ be a segmentation of width $k$ relative to $(X,Y)$ (so $Y = V(G) \setminus X_n$).  For the first part, observe that under the assumption $d(x) = k$, the segmentation $\{x\} \subset X \subset \ldots \subset X_n$ is also width $k$.  For the second part, we may assume that $\{v\} = X_i \setminus X_{i-1}$ for some $2 \le i \le n$.  Now $d(X_i) = k = d(X_{i-1})$ implies $e(v, X_{i-1}) = e(v, V(G) \setminus X_i)$ and thus $d(v) = 2 e(v,X_{i-1})$ is even.  
\end{proof}

We are now going to prove the harder direction of Theorem \ref{Dm-thm}. If $H$ is a graph with roots $(x_0, \ldots , x_k)$ and $X \subset V(H)$ contains a single root vertex, say $x_0$, (whenever convenient) we will interpret $H.X$ as a rooted graph with roots $(X, x_1, \ldots , x_k)$.

\begin{proof}[Proof of the `only if' direction of Theorem \ref{Dm-thm}]
	Towards a contradiction, suppose $G = (V, E)$ is a counterexample to the Theorem for which $|V| + |E|$ is minimum. We will prove some properties of $G$ which finally shows that $G$ does not exist.
	
	\begin{enumerate}[label=(\arabic*), labelindent=0em ,labelwidth=0cm, parsep=6pt, leftmargin =7mm]
		\item 
		\label{Dm-ec}
		$\lambda_s^i (G) \ge m+1$.
		
		Suppose (for a contradiction) that there exists an edge-cut $\delta(U)$ of size at most $m$ in $G$ which separates $x_0 \in U$ from $x_1$, where $|U|, |V\setminus U| \ge 2$. Since $\lambda_s (G) \ge m$, we have $d(U) =m$. Define the graphs $G_0 =G. (V\setminus U), G_1 = G.U$, and let $y_0, y_1$ be the new vertices resulting by identifying $V\setminus U, U$ to a vertex, respectively (so $d_{G_i}(y_i) =m$). For $i=0,1$, let $x_i, y_i$ be root vertices of $G_i$. By the edge-connectivity of $G$, $G \succ_r G_i$, and thus $G_i \nsucc_r D_m$. If $|V(G_i)| \ge 4$, then (by the minimality of our counterexample $G$) the theorem implies that $G_i$ is type $A_m$ or type $B_m$. However, from $d_{G_i}(y_i) = m$ we deduce that $G_i$ is type $A_m$. So, there exists $X_i \subseteq V(G_i)$ with $x_i \in X_i$ and $|X_i| \le 2$ so that the graph $G_i$ has a $(2, 1)$- segmentation of width $m$. On the other hand, if $|V(G_i)| \le 3$, for $X_i = V(G_i) \setminus y_i$ we have $x_i \in X_i$, $|X_i| \le 2$, and the graph $G_i$ has a has a $(2, 1)$- segmentation (of length 1) of width $m$. It now follows that the original graph $G$ has type $A_m$ relative to $X_0$ and $X_1$, and this contradiction establishes \ref{Dm-ec}.
		
		\item 
		\label{Dm-no-greedy-root}
		For every vertex $v \in V \setminus \{x_0, x_1\}$ we have $d(v) > 2 e(v, x_i)$, for $i = 0, 1$.
		
		Suppose (for a contradiction) that $v \in V \setminus \{x_0, x_1\}$ exists with, say, $d(v) \le 2 e(v, x_0)$. Let $G'$ be the graph obtained from $G$ by identifying $\{x_0, v\}$ to a new root vertex, and note that Lemma \ref{atleast4} implies that $|V(G')| \ge 4$. On the other hand, it follows from $d(v) \le 2 e(v, x_0)$ that $G\succ G'$, and thus $G'\nsucc D_m$. Now, $G$ being a minimum counterexample implies that the Theorem holds for $G'$, and so it is either type $A_m$ or type $B_m$. It follows from \ref{Dm-ec} that $G'$ is not type $A_m$. It is now straightforward to check that $G$ satisfies the theorem.
%
%
		\item 
		\label{Dm-no-dU4}
		There does not exist $i=0, 1$ and $U \subset V\setminus \{x_0, x_1\}$ for which $|U| \ge 2$ and $d(U) =4$, and $e(x_i, U) \ge 2$.
		
		Towards a contradiction, suppose such $U$ exists with, say, $e(x_0, U)\ge 2$. Choose distinct $e, e' \in E(x_0, U)$, and let $\{f, f'\} = \delta(U) \setminus \{e, e'\}$. Now let $G'$ be the graph obtained from $G$ by subdividing $f, f'$ with a new vertex, and then identifying the two new vertices of degree two to a new vertex $y$.
		
		Consider $H = G'[U \cup \{x_0, y\}]$, rooted at $x_0, y$. By construction, $d_H(x_0) = d_H(y) =2$, and we have $|E(H)| < |E|$. If $\lambda_s(H) < 2$, it follows from $\lambda^i(G) \ge 4$ that $|U| =2$, and that both vertices in $U$ have degree three, and thus there exist $u \in U$ with $e_G(u, x_0) =2$. This, however, contradicts \ref{Dm-no-greedy-root}, and thus $\lambda_s(H) \ge 2$. Moreover, $\lambda_n(H) \ge \lambda_n(G) \ge 3$, and  $\lambda^i_n(H) \ge \lambda^i_n(G) \ge 4$. So, we can apply the Theorem to $H$ to conclude that either $H \succ_r D_2$ or $H$ has type $A_2$ or type $B_2$. Since the root vertices of $H$ have degree two, $H$ must be type $A_2$, and (by part \ref{dx0m} of Observation \ref{seg-obs}) it is a doubled path. This, however, implies that there exists $u\in U$ with $d(u) = 4$ and $e_G(u, x_0)$, which contradicts \ref{Dm-no-greedy-root}. Thus, $H \succ_r D_2$.
		
		Next, let $K$ be the graph obtained from $(G. U)\setminus \{e, e'\}$ by adding a new vertex $z$ which has two edges to $U$ and $m-2$ edges to $x_0$. It follows from $d(U) = 4$ and  $\lambda(G)\ge m$ that there are $m$ edge-disjoint $z-x_1$ paths in $K$. This, together with $H \succ_r D_2$ implies that $G \succ_r D_m$---a contradiction.
	\end{enumerate}
	Before proceeding, let us introduce some helpful notation. We call an edge $e\in E$ {\it safe} if the graph $G'$ obtained from $G\setminus e$, followed by suppressing any resulting degree two vertices, satisfies $|V(G')| \ge 4$, and $d_{G'}(x_0), d_{G'}(x_1) \ge m$. Observe that if $e$ is safe, \ref{Dm-ec} implies that $\lambda_s(G') \ge m$. Moreover, it follows from $\lambda^i_n(G) \ge 4$ that $\lambda_n(G') \ge 3$. Below, we will confirm that we also have $\lambda^i_n (G') \ge 4$, which then puts us in a position to apply th e Theorem to it.
	\begin{enumerate}[label=(\arabic*), labelindent=0em ,labelwidth=0cm, parsep=6pt, leftmargin =7mm]
		\setcounter{enumi}{3}
		\item
		\label{Dm-safe}
		If $e \in E$ is safe, and $G'$ is the graph obtained from $G\setminus e$ by suppressing degree two vertcies, then $\lambda^i_n (G') \ge 4$.
		
		Suppose (for a contradiction) that $\lambda_n^i (G') =3$. Let $U_1, \ldots , U_k$ be the maximal subsets of $V(G') \setminus \{x_0, x_1\}$ for which $|U_i| \ge 2$ and $d_{G'} (U_i) =3$. Note that for $1 \le i <j \le k$ the sets $U_i$ and $U_j$ are distinct. It is because otherwise $d(U_i\cap U_j) + d(U_i \cup U_j) \le d(U_i) + d(U_j) =6$, which together with $\lambda_n(G') \ge 3$ would imply that $d(U_i \cup U_j) =3$---contradicting maximality of $U_i, U_j$.
		
		Now, let $G''$ be the graph obtained from $G'$ by identification of each set $U_i$ to a new vertex $u_i$. Note that \ref{Dm-no-dU4} implies that $|V(G'')| \ge 4$. Moreover, we have $\lambda_s(G'') \ge m, \lambda_n(G'') \ge 3$ and $\lambda^i_n(G'') \ge 4$. On the other hand, since $\lambda_n(G') \ge 3$ we have $G' \succ G''$, and thus $G'' \nsucc_r D_m$ (else $G\succ_r G'\succ_r D_m$). Since $|V(G'')|< |V|$, we can apply the Theorem to $G''$ to conclude that it is either type $A_m$ or type $B_m$. Now since every non-root vertex in a graph of type $B_m$ has even degree, it must be that $G''$ has type $A_m$. Now, part \ref{no-ht-even} of Observation \ref{seg-obs} implies that (by possibly relabeling $x_0, x_1$) the graph $G''$ has type $A_m$ relative to $(X_0, X_1)$, where $X_0 = \{x_0, u_1\}$. Furthermore, it follows from $d_{G''}(x_0) \ge m$, $d_{G''} (u_1) = 3$ together with $d_{G''}(\{x_0, u_1\}) =m$ that $e_{G''}(x_0, u_1) \ge 2$. This, however, implies that $e_G(x_0, U_1) \ge 2$, and since $d_G(U_1) = 4$, we get a contradiction with \ref{Dm-no-dU4}. This completes the proof of \ref{Dm-safe}.
		\item 
		\label{Dm-all-odd}
		Every vertex $v \in V\setminus \{x_0, x_1\}$ has odd degree.
		
		Towards a contradiction, suppose $v$ violates \ref{Dm-all-odd}. It follows from \ref{Dm-no-greedy-root} that there is a neighbour $u$ of $v$ which is not a root vertex. Since $d(v) \ge 4$, $uv$ is safe. So the graph obtained from $G\setminus uv$ by suppressing degree two vertices satisfies the hypothesis of the theorem. Since $G\succ_r G'$, we have $G'\nsucc_r D_m$, so $G'$ either has type $A_m$ or type $B_m$. Since $d_{G'} (v)$ is odd (by part \ref{no-ht-even} of Observation \ref{seg-obs}) $G'$ must be type $A_m$ relative to $(X_0, X_1)$, with $v$ in $X_0 \cup X_1$, say $X_0  = \{x_0, v\}$. Now $d_{G'} (x_0) \ge m, d_{G'}(\{x_0, v\}) = m$ together with the parity of $d_{G'}(v)$ imply that $2e_{G'}(x_0, v) \ge d_{G'}(v)$. However this implies $2e_{G}(x_0, v) \ge d_{G}(v)$, which is a contradiction with \ref{Dm-no-greedy-root}.
		\item
		\label{Dm-root-tight}
		We have $d(x_0) = d(x_1) = m$.
		
		Suppose (for a contradiction) that $d(x_0)> m$. If $x_0$ has a neighbour $v$ other than $x_1$, let $e = x_0v$; else, let $e = x_0x_1$. In either case $e$ is safe, and consider $G'$ which is the graph obtained from $G\setminus e$ by suppressing degree two vertcies. Since $G \succ_r G'$, we have $G'\nsucc_r D_m$ and since $|E(G')| < |E|$, we can apply the Theorem to $G'$. Since $G'$ has non-root vertices of odd degree, it has type $A_m$ relative to $(X_0, X_1)$. As before, it follows from part \ref{no-ht-even} of Observation \ref{seg-obs}  that either $|V(G')| = 4$, and both non-root vertices of $G'$ have odd degree, or $|V(G')| = 5$ and there is a unique non-root vertex of $G'$ which has even degree. In either case, we may assume $X_0 = \{x_0, u\}$. It now follows from $d_{G'}(x_0) \ge m$ and $d_{G'}(X_0) = m$ that $2e_{G'}(x_0, u) \ge d_{G'}(u)$. Now, note that $e$ must be in $\delta(X_0)$, and thus we get $2e_{G}(x_0, u) \ge d_{G}(u)$, which is a contradiction with \ref{Dm-no-greedy-root}.
	\end{enumerate}
	We can now finish the proof. Note that it follows from \ref{Dm-all-odd} and \ref{Dm-root-tight} that $|V| \ge 6$. Let $v\in V\setminus \{x_0, x_1\}$ and note that by \ref{Dm-no-greedy-root}, we may choose an edge $e = vv'$, where $v' \notin \{x_0, x_1\}$. The edge $e$ is safe, and as in the proof of \ref{Dm-all-odd} and \ref{Dm-root-tight} the graph $G'$ obtained from $G\setminus e$ by suppressing degree two vertcies has type $A_m$. Since $|V| \ge 6$, there exist at least two vertices $w, w' \in V\setminus \{x_0, x_1, v, v'\}$ and it follows from \ref{Dm-all-odd} that $w, w'$ have odd degree (in both $G$ and $G'$). Therefore we may assume $G'$ has type $A_m$ relative to $(X_0, X_1)$, where $X_0 = \{x_0, w\}$. However, then we have $m = e_{G'}(X_0) = e_{G}(X_0)$ which contradicts \ref{Dm-ec}. This final contradiction completes the proof of Theorem \ref{Dm-thm}.
\end{proof}

\subsection{Forbidding a (rooted) $K_4$ immersion}
\label{sec-k4}

Since $K_4$ is a graph of special interest, we have devoted this section to characterizing the graphs which do not immerse $K_4$ with two, one, or zero roots.  Since all of these results are convenient to state without the assumption of internal 4-edge-connectivity, we have done so.  

\begin{corollary}
	\label{D3-wo*}
	Let $G$ be a $3$-edge-connected graph where $|V(G)| \ge 4$, with two root vertices. Then $G\nsucc_r D_3$ iff either of the following occurs:
	\begin{itemize}
		\item
		$G$ is a doubled cycle.
		\item
		$G$ has a segmentation of width 3 relative to $(X_0, X_1)$ with the added property that if $|X_i| \ge 3$ then $X_i$ does not contain a root vertex.
	\end{itemize}
\end{corollary}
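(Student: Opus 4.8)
The plan is to deduce the corollary from Theorem~\ref{Dm-thm} applied with $m=3$, since $D_3$ is precisely $K_4$ with two roots. A $3$-edge-connected graph automatically satisfies $\lambda_n(G)\ge 3$ and $\lambda_s(G)\ge 3$, so the only hypothesis of Theorem~\ref{Dm-thm} that can fail is $\lambda_n^i(G)\ge 4$; thus the work lies in handling internal $3$-edge-cuts and in matching conclusions. Two preliminary facts do the matching. First, for a $3$-edge-connected graph on at least four vertices, having type $B_3$ is the same as being a doubled cycle: in the defining identity $e(x_0,x_1)+\sum_L n_L=4$ each $n_L\ge 2$, so $|V(G)|\ge 4$ forces either one lobe (necessarily a doubled path) with $e(x_0,x_1)=2$, or two lobes (doubled paths) with $e(x_0,x_1)=0$, and in both cases the pieces reassemble into a doubled cycle. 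Second, having type $A_3$ implies the segmentation condition of the corollary, which I read as constraining the head $X_0$ and the tail $X_1$ only (each must be rootless once it has at least three vertices); the interior sets of a width-$3$ segmentation routinely contain a root, and this is the reading forced by requiring the type-$A_3$ graphs that Theorem~\ref{Dm-thm} already classifies to satisfy the corollary.

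For the ``if'' direction, both listed structures obstruct a rooted $D_3$-immersion. A doubled cycle satisfies every hypothesis of Theorem~\ref{Dm-thm} (with $m=3$) and is type $B_3$, so the ``if'' direction of that theorem applies. For the segmentation structure, suppose $(G;x_0,x_1)\succ_r D_3$ with terminal set $T$, $|T|=4$. Any width-$3$ edge cut of $G$ is crossed by at most three of the six immersion paths, so no set $S$ in the segmentation can split $T$ into two parts of size two, whence $|S\cap T|\in\{0,1,3,4\}$ for every $S$ in the chain. Since consecutive sets differ by a single vertex, $|S\cap T|$ stays $\le 1$ throughout or stays $\ge 3$ throughout; in the first case the tail contains at least three terminals and so has size $\ge 3$, hence is rootless, hence contains at least three non-root terminals, and in the second case the head does. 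Either way this is impossible, since only two of the four terminals are non-roots.

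For the ``only if'' direction, let $\mathcal U$ be the family of maximal rootless internal $3$-edge-cuts of $G$. A short submodularity argument with $3$-edge-connectivity shows that distinct members of $\mathcal U$ are disjoint and that each induces a connected subgraph. Let $G^*$ be obtained from $G$ by contracting each member of $\mathcal U$ to a new non-root vertex. Then $G^*$ is $3$-edge-connected with the same roots and satisfies $(G^*;x_0,x_1)\nsucc_r D_3$ (because $(G;x_0,x_1)\succ_r(G^*;x_0,x_1)$, each member of $\mathcal U$ being connected and rootless); moreover $\lambda_n^i(G^*)\ge 4$, since an internal $3$-edge-cut of $G^*$ not separating the roots would pull back to a rootless internal $3$-edge-cut of $G$ contained in no member of $\mathcal U$, contradicting maximality. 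If $|V(G^*)|\le 3$, then $\mathcal U$ consists of a single set $U$ with $V(G)\setminus U=\{x_0,x_1\}$, and the length-$1$ segmentation with head $U$ and tail $\{x_0,x_1\}$ witnesses the corollary. Otherwise $G^*$ meets all hypotheses of Theorem~\ref{Dm-thm}, so $G^*$ is type $A_3$ or type $B_3$. If $G^*$ is type $B_3$ it is a doubled cycle, hence $4$-regular; but each contracted vertex has degree $3$, so $\mathcal U=\emptyset$ and $G=G^*$ is a doubled cycle. If $G^*$ is type $A_3$, fix a width-$3$ segmentation of $G^*$ with head and tail of size $\le 2$ containing the roots; by part~\ref{no-ht-even} of Observation~\ref{seg-obs} its interior vertices have even degree, so each degree-$3$ contracted vertex lies in the head or the tail. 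By part~\ref{dx0m} of Observation~\ref{seg-obs} I may assume that any contracted vertex lying in the head (tail) is its unique element, and then un-contracting it replaces that singleton by the rootless set $U$ and leaves every other set of the chain unchanged; the result is a width-$3$ segmentation of $G$ whose head and tail are rootless whenever large, as required.

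The main obstacle, I expect, is the bookkeeping of the last step: fixing the correct head-and-tail-only reading of the corollary's condition, verifying that un-contracting preserves both the width-$3$ property and the ``one vertex at a time'' property of a segmentation, and dealing directly with the small reduced graphs $|V(G^*)|\le 3$, which fall outside the scope of Theorem~\ref{Dm-thm}.
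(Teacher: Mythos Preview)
Your proof is correct and takes essentially the same approach as the paper: contract the maximal rootless internal $3$-edge-cuts to obtain a graph $G^*$ with $\lambda_n^i(G^*)\ge 4$, apply Theorem~\ref{Dm-thm} there, and then un-contract using Observation~\ref{seg-obs} to place the odd-degree contracted vertices in the head or tail. Your write-up is in fact a bit more careful than the paper's, since you explicitly supply the ``if'' direction argument for the segmentation structure and you handle the degenerate case $|V(G^*)|\le 3$ that the paper passes over; one small quibble is that the reason $(G;x_0,x_1)\succ_r(G^*;x_0,x_1)$ is the $3$-edge-connectivity of $G$ (which lets you route three edge-disjoint paths from any vertex of $U$ to $\delta(U)$), not merely the connectedness of $G[U]$.
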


\begin{proof}
	Suppose $G\nsucc_r D_3$. If $G$ does not have an internal $3$-edge-cut with both roots on the same side, we apply Theorem \ref{Dm-thm} to conclude that $G$ has type $A_3$ or type $B_3$. In the former case there is nothing left to prove, and in the latter, note that it follows from our definition that a graph of type $B_3$ is a doubled cycle.
	
	 In the remaining case, let $U_1, \ldots, U_k$ be the maximal subsets of $V(G) \setminus \{x_0, x_1\}$ for which $|U_i| \ge 2$ and $d_{G} (U_i) =3$. A similar argument as in the proof of \ref{Dm-safe} in the proof of Theorem \ref{Dm-thm} shows that for $1\le i<j \le k$ we have $U_i \cap U_j = \emptyset$. Now, let $H$ be the graph obtained from identifying each $U_i$ to a new vertex $u_i$ (so $\lambda^i_n(H)\ge 4$). Since $G$ is $3$-edge-connected, $G \succ_r H$, and thus $H \nsucc_r D_3$. Since $d_{H}(u_1) =3$, part \ref{no-ht-even} of Observation \ref{seg-obs}  implies that $H$ has type $A_3$ relative to $(X_0, X_1)$, where (by possibly relabeling $x_0, x_1$) $X_0 = \{x_0, u_1\}$ and $X_1 = \{x_1, v\}$, where $v = u_2$ if $u_2$ exists. This implies that if $U_2$ exists, $G$ has a segmentation of width three relative to $(U_1, U_2)$, and otherwise $G$ has a segmentation of width three relative to $(U_1, X_1)$, as desired.
\end{proof}

%

Corollary \ref{D3-wo*} gives the structure of $3$-edge-connected graphs excluding a $D_3$ immersion, or equivalently an immersion of $K_4$ with two roots. Our next task is to characterize the structure of $3$-edge-connected graphs excluding rooted immersion of $K_4$ with one or no root. 

\begin{corollary}
	\label{cor-rk4}
	Let $G =(V, E)$ be a $3$-edge-connected graph with up to one root vertex and $|V| \ge 4$. Then $G$ does not have an (a rooted) immersion of $K_4$ if and only if:
	\begin{itemize}
		\item
		$G$ is a doubled cycle, or
		\item
		$G$ has a $(2, 2)$-segmentation of width three.
	\end{itemize}
\end{corollary}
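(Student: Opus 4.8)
The plan is to derive Corollary~\ref{cor-rk4} from Corollary~\ref{D3-wo*} and Theorem~\ref{Dm-thm} by adjoining a second root. Since $D_3$ is $K_4$ with two roots and the automorphism group of $K_4$ acts transitively on ordered pairs of vertices, a graph $G$ carrying at most one root has a (rooted) immersion of $K_4$ if and only if $(G;a,b)\succ_r D_3$ for some pair of distinct vertices $a,b$, where, if $G$ has a root, $a$ is taken to be that root. Thus the hypothesis ``$G$ has no (rooted) immersion of $K_4$'' says exactly that $(G;a,b)\nsucc_r D_3$ for every such admissible pair, and the ``if'' direction is the assertion that this holds for all pairs $a,b$.

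For the ``if'' direction I argue directly. If $G$ is a doubled cycle, Corollary~\ref{D3-wo*} gives $(G;a,b)\nsucc_r D_3$ for every choice of roots, so $G$ has no (rooted) immersion of $K_4$. If instead $G$ has a $(2,2)$-segmentation $X_1\subset\cdots\subset X_k$ of width $3$, suppose for contradiction that $T$ is the terminal set of a $K_4$ immersion. For each $i$ the six immersion paths are edge-disjoint, so at most $d(X_i)=3$ of them cross the cut $\delta(X_i)$; on the other hand at least $|T\cap X_i|\,(4-|T\cap X_i|)$ of them must cross it, since $K_4$ has that many edges between $|T\cap X_i|$ of its vertices and the other $4-|T\cap X_i|$. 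Hence $|T\cap X_i|\ne 2$. But $|T\cap X_i|$ is nondecreasing and changes by at most $1$ at each step; it is at most $|X_1|\le 2$ when $i=1$ and at least $4-|V\setminus X_k|\ge 2$ when $i=k$; being never equal to $2$, it is at most $1$ at the start and at least $3$ at the end, which unit steps cannot achieve.

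For the ``only if'' direction I induct on $|V(G)|$. If $G$ is internally $4$-edge-connected, then $\lambda^i_n(G)\ge 4$, while $\lambda_s(G),\lambda_n(G)\ge 3$ as $G$ is $3$-edge-connected; hence Theorem~\ref{Dm-thm} with $m=3$ applies to $(G;a,b)$ for an admissible pair and yields that $G$ has type $A_3$, which is precisely a $(2,2)$-segmentation of width $3$, or type $B_3$, which is a doubled cycle. Otherwise $G$ has an internal $3$-edge-cut $\delta(X)$ with $|X|,|V\setminus X|\ge 2$. If $|V(G)|=4$ then $|X|=|V\setminus X|=2$, and the chain consisting of the single set $X$ is a $(2,2)$-segmentation of width $3$. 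If $|V(G)|\ge 5$, assume $|X|\le|V\setminus X|$, so $|V\setminus X|\ge 3$. Since $d(X)=3$ and $G$ is $3$-edge-connected, $G[X]$ and $G[V\setminus X]$ are connected, so contracting either side is an immersion operation; thus $G^{**}:=G.X$ — and also $G^{*}:=G.(V\setminus X)$ when $|X|\ge 3$ — is $3$-edge-connected, has fewer but at least four vertices, inherits the root of $G$, and has no (rooted) immersion of $K_4$. By induction each of these is a doubled cycle or has a $(2,2)$-segmentation of width $3$, and since the vertex created by a contraction has degree $3$ whereas doubled cycles are $4$-regular, each has a $(2,2)$-segmentation of width $3$; in such a segmentation that odd-degree vertex is not an interior step, by part~\ref{no-ht-even} of Observation~\ref{seg-obs}, so it is the head or the tail. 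When $|X|=2$, undoing the contraction in the segmentation of $G^{**}$ — reversing it and adjoining a single vertex at one end if needed — gives a $(2,2)$-segmentation of $G$ with $X$ as an endpoint. When $|X|\ge 3$, undoing both contractions yields two width-$3$ chains of $G$ that meet along $\delta(X)$, and they concatenate (after suitable reversals, and again bridging a seam by one vertex if needed) into a $(2,2)$-segmentation of $G$. This closes the induction.

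The main difficulty lies in this gluing step: confirming that the segmentations handed back by the induction actually concatenate into a genuine segmentation of $G$, with consecutive sets differing by exactly one vertex and with every set still having boundary of size $3$. The one real subtlety is that the head or tail containing the contracted vertex may have size $2$ instead of $1$, which leaves a one-vertex gap at the seam; this is patched using the defining property that the extreme sets of a width-$k$ segmentation have boundary of size $k$, so the complement of such a set — the set one needs to splice in — also has boundary of size $k$. Keeping track of where the single root of $G$ ends up under the contractions is routine, since the structural conclusion imposes no constraint on the root's location.
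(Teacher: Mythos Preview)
Your proof is correct and follows essentially the same inductive strategy as the paper: reduce along internal $3$-edge-cuts, apply the $D_3$ structure on each side, and glue the resulting width-$3$ segmentations using the fact that the contracted vertex has degree~$3$ (hence lies in the head or tail by Observation~\ref{seg-obs}). The packaging differs slightly---the paper invokes Corollary~\ref{D3-wo*} once and then recurses with the \emph{contracted} vertex as the root (so Observation~\ref{seg-obs}\ref{dx0m} yields a $(1,2)$-segmentation directly), whereas you bifurcate on internal $4$-edge-connectivity, apply Theorem~\ref{Dm-thm} in the connected case, and keep the original root through the recursion, which forces your explicit ``bridging'' step at the seam---but the substance is the same.
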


\begin{proof}
We proceed by induction on $|V|$.  If $G$ has a root vertex, denote it by $x$; otherwise, let $x$ be an arbitrary vertex.  Choose a vertex $y \in V \setminus \{x\}$, consider both $x$ and $y$ as roots and apply the previous corollary.  If we find a rooted immersion of $D_3$ or discover that $G$ is a doubled cycle, we are finished.  Otherwise, $G$ has a segmentation of width 3 relative to $(X_0, X_1)$ where each $X_i$ either has size $\le 2$ or does not contain a root vertex.  If $|X_0|, |X_1| \le 2$ we have nothing left to prove.  So suppose $|X_0| \ge 3$ and form the graph $G_0 = G. (V \setminus X_0)$ where the vertex formed by this identification, denoted $x_0$, is considered as the root.  If $G_0$ with root vertex $x_0$ has a rooted immersion of $K_4$, then it follows from the 3-edge-connectivity of $G$ that $G$ (with root vertex $x$) has a rooted immersion of $K_4$ otherwise by induction $G_0$ has a $(2,2)$-segmentation of width 3 (it cannot be a doubled cycle since $d_{G_0}(x_0) = 3$).  However, since $d_{G_0}(x_0) = 3$, Observation \ref{seg-obs} implies that $G_0$ has a $(1,2)$ segmentation of width 3 relative to $( \{x_0\}, Y_0)$.  By applying a similar argument if $|X_1| \ge 3$, we conclude that the original graph $G$ has a $(2,2)$-segmentation, as desired.
\end{proof}

\section{Forbidding a rooted $W_4$ immersion}
\label{sec-rw4}
In this section, we state and prove our structural theorem for graphs without an immersion of rooted $W_4$, which is $W_4$ where its center is declared to be the root vertex. The result, not only interesting by itself, but also implies a characterization of graphs without $W_4$ immersion. A perhaps more significant application of this theorem appears in our subsequent paper when obtaining a precise structural theorem for graphs which exclude $K_{3,3}$ immersion.

\subsection{Statement of the main theorem}
 To state our result on characterization of graphs without a rooted $W_4$ immersion, we need to introduce four families of such graphs. Let $G$ be a graph with the root vertex $u$. Then we say 
\begin{description}
	\item[Type 1.] $G$ has type $1$
	 if it has a $(2, 3)$-segmentation of width four in which $u$ is in the head of the segmentation.
	
	If $U, W$ are the head and tail of such a segmentation, respectively, we may say $G$ has type $1$ relative to $(U, W)$.
	
	\item[Type 2.] $G$ is type $2$ 
	if there exists a set $W \subset V(G) \setminus \{u\}$ with $1\le |W| \le 2$ so that the graph $G^*$ obtained by identifying $W$ to a single vertex $w$ has a doubled cycle $C$ containing $u, w$ satisfying one of the following:
	\begin{description}
		\item[(2A)] $u$ and $w$ are not adjacent in $C$ and $G^* = C + uw$
		\item[(2B)] $u$ and $w$ have a common neighbour $v$ in $C$ and $G^* = C + uv + vw$
		\item[(2C)] $u$ and $w$ are adjacent in $C$ and $G^* = C + uw$
	\end{description}
	\begin{figure}[htbp]
		\centering
		\begin{subfigure}[b]{0.3\textwidth}
			\centering
			\includegraphics{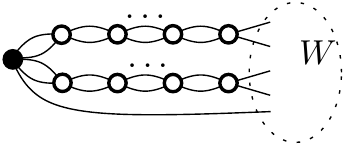}
			\caption{Type 2A}
			\label{fig:2A}
		\end{subfigure}
		\hfill
		\begin{subfigure}[b]{0.3\textwidth}
			\centering
			\includegraphics{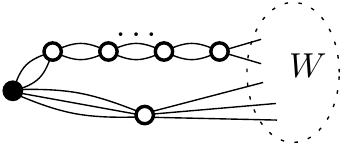}
			\caption{Type 2B}
			\label{fig:2B}
		\end{subfigure}
		\hfill
		\begin{subfigure}[b]{0.3\textwidth}
			\centering
			\includegraphics{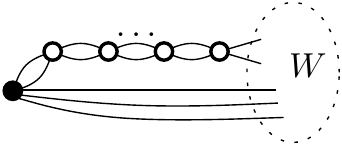}
			\caption{Type 2C}
			\label{fig:2C}
		\end{subfigure}
		\caption{}
		\label{}
	\end{figure}
	\item[Type 3.] $G$ is type $3$ if after sausage reduction $G$ is isomorphic to a graph in Figure \ref{fig:rw4-t3}. That is $G$ is type $3$ if it can be obtained from a graph in Figure \ref{fig:rw4-t3} by replacing the pair of green vertices with a chain of sausages of arbitrary order $\ge 2$.
	\begin{figure}[htbp]
		\centering
		\includegraphics{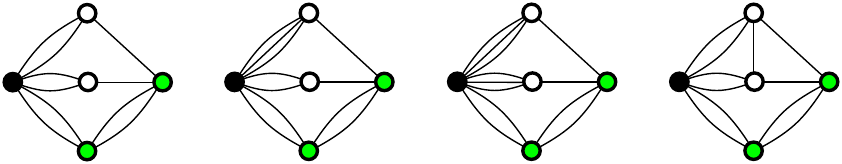}
		\caption{Type 3 graphs after sausage reduction}
		\label{fig:rw4-t3}
	\end{figure}
	\item[Type 4.] $G$ is type $4$ if either
	\begin{itemize}
		\item
		it can be obtained from the leftmost graph in Figure  $\ref{fig:rw4-t4}$ by adding up to one more edge incident to each of $y,y',y''$ in parallel to an existing edge. 
		
		\item
		it is isomorphic to one of the four rightmost graphs in Figure $\ref{fig:rw4-t4}$.
	\end{itemize}
	
	\begin{figure}[htbp]
		\centering
		\includegraphics{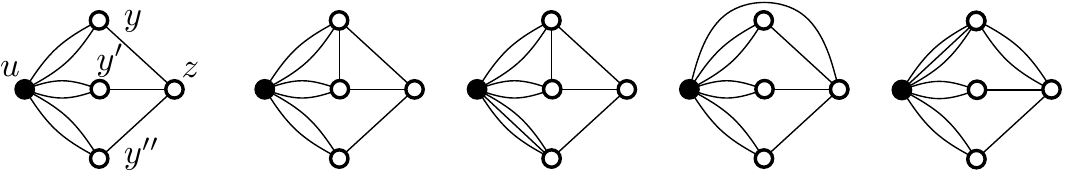}
		\caption{Type 4 graphs}
		\label{fig:rw4-t4}
	\end{figure}
\end{description}

With these definitions, our main result in this section can be stated as follows:
\begin{theorem}
	\label{rw4-thm}
	Let $G$ be a $3$-edge-connected, internally $4$-edge-connected graph with $|V(G)| \ge 5$ and with a root vertex $x$.  Then $G$ contains a rooted immersion of $W_4$ if and only if $G$ does not have one of the types $1$, $2$, $3$, or $4$.
\end{theorem}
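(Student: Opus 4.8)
The plan is to prove the easy (``if'') direction first and then tackle the ``only if'' direction by a minimal-counterexample argument in the spirit of the proof of Theorem~\ref{Dm-thm}. For the ``if'' direction, I would verify that none of the four types admits a rooted $W_4$ immersion. For type~1 this is essentially Observation~1.\ref{seg-m}: a rooted $W_4$ has $\lambda_s^i \ge 5$ relative to its center and any other vertex, so a $(2,3)$-segmentation of width four centered at the root is an obstruction (the root would have to be the unique terminal trapped inside the segmentation, but then there is no room for the other four terminals). For types 2, 3, and 4 I would argue that the graph is too sparse: rooted $W_4$ has five vertices of degree at least~3 (with the center of degree~4), and in each of these families a short degree/edge-cut count (using the doubled-cycle structure in type~2, and the explicit bounded list in types 3--4, the latter verified by computer as the excerpt indicates) shows no five edge-disjoint-path system realizing $W_4$ can be embedded. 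The sausage-reduction invariance from the Definition of sausage shortening handles the ``arbitrary order $\ge 2$'' part of type~3 and the parallel-edge freedom in type~4.

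For the ``only if'' direction, suppose $G$ is a vertex-plus-edge-minimal counterexample: $3$-edge-connected, internally $4$-edge-connected, $|V(G)| \ge 5$, rooted at $x$, with $G \nsucc_r W_4$ but $G$ of none of the types. The first goal is to promote the connectivity: show $\lambda_s^i(G; x, v) \ge 5$ for every $v$, since an internal $4$-edge-cut separating $x$ from some vertex would split $G$ into two smaller rooted pieces, each still $W_4$-free, and minimality plus a gluing argument (analogous to step~\ref{Dm-ec} of Theorem~\ref{Dm-thm}, using the $D_m$ structure theorem on the pieces that look like rooted-$K_4$ obstructions) would force $G$ itself into type~1. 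Next I would establish the analogue of steps \ref{Dm-no-greedy-root}--\ref{Dm-root-tight}: no non-root vertex $v$ can have $2e(v,x) \ge d(v)$ (else contract $v$ into $x$ and apply minimality), there are no small cuts $U$ with $e(x,U)$ large, every non-root vertex has degree exactly~3 (delete a ``safe'' edge between two non-root vertices, apply minimality to the suppressed graph, and read off a contradiction from the resulting type), and $d(x) = 4$. This reduces us to graphs that are cubic away from the root, with the root of degree~4 — exactly the regime where a rooted $W_4$ ``wants'' to live — and the remaining work is to show that such a graph either contains the rooted $W_4$ or collapses into type 2, 3, or 4.

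The main obstacle, and where I expect the bulk of the argument, is this last reduction: once $G$ is cubic outside a degree-$4$ root and highly edge-connected, I would delete a well-chosen safe edge $e = vv'$ (both endpoints non-root), apply the theorem inductively to the suppressed graph $G'$, and case-split on which of the four types $G'$ has, then show that re-inserting $e$ either creates the rooted $W_4$ immersion outright or keeps $G$ inside one of the types. The type-1 case should be routine via segmentation surgery (as in the final paragraph of Theorem~\ref{Dm-thm}'s proof); the type-2 case requires tracking how $e$ interacts with the doubled cycle and its chords; and the type-3/type-4 cases are where the computer search over graphs on up to~8 vertices is essential — for $|V(G')|$ small we simply appeal to the exhaustive check, and for larger $G'$ the sausage-reduced structure (Theorem~\ref{w4-thm-intro}'s philosophy) forces $G'$ to have type~1, which we have already handled. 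The delicate point throughout is bookkeeping the root: many of the standard ``split off a path in a lobe'' or ``contract a tight set'' moves must be done so as to respect that $x$ is the designated center, and verifying that the $\lambda_s^i \ge 5$ condition genuinely obstructs $W_4$ (rather than merely $K_4$) is what ties the argument to the specific geometry of the wheel.
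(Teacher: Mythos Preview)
Your overall architecture (verify the ``if'' direction type by type, then run a minimal-counterexample argument for ``only if'') matches the paper, but the heart of your ``only if'' plan diverges from the paper and contains a real gap. You propose to prove, \`a la steps~\ref{Dm-all-odd}--\ref{Dm-root-tight} of the $D_m$ proof, that in a minimal counterexample every non-root vertex has degree exactly~3 and $d(x)=4$. That argument worked for $D_m$ because both obstruction families $A_m$ and $B_m$ force all non-head/tail (resp.\ non-root) vertices to have \emph{even} degree, so after deleting a safe edge the parity of the offending vertex gives an immediate contradiction. For rooted $W_4$ the obstruction list is much richer: types~2, 3, and~4 all contain non-root vertices of degree~4 (e.g.\ interior sausage vertices, or the vertex $v$ in type~2B), so landing in one of those types after an edge deletion does \emph{not} give a parity contradiction, and your sketch offers no replacement mechanism. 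The claim ``every non-root vertex has degree exactly~3'' is therefore unsupported, and the remainder of your plan rests on it.

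The paper takes a different route that avoids this problem entirely. Rather than constraining degrees, it constrains \emph{edge-cuts}: first it shows that on the non-root side of any internal 4-edge-cut with at least three vertices, the induced subgraph is a chain of sausages (Lemma~\ref{w4-4ecut-Xc-sausage}); the key trick there is to apply Theorem~\ref{Dm-thm} twice --- once with $m=4$ on the root side to find a rooted $D_4$, and once with $m=2$ on the non-root side, observing that $D_4$ on one side plus $D_2$ on the other combine to a rooted $W_4$. Sausage reduction then eliminates all deep 4-cuts (Lemma~\ref{w4-no-deep-4ecut}). A separate lemma rules out deep 5-edge-cuts as well (Lemma~\ref{w4-no-deep-5ecut}), and this is essential: in your final edge-deletion step, when $G'$ turns out to have type~1 you need the original $G$ to violate a 4- \emph{or} 5-cut constraint, not just a 4-cut one. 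Finally, the computer search is used as the full base case for $|V|\le 8$ (Lemma~\ref{w4-code}), not merely to certify the finite graphs in types~3 and~4; this lets the inductive step assume $|V|\ge 9$, which in turn forces any smaller graph arising from contraction or edge deletion to be sausage-reduced of size $\ge 8$, ruling out types~2--4 by a size count rather than by a structural argument. Your proposal touches on none of the 5-cut analysis or the $D_4$/$D_2$ combination, and those are the ideas that make the induction close.
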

\subsection{Proof of the `if' direction}
Before proving the easier part of the theorem, we make a couple simple observations.

\begin{observation}
	\label{containw4obs}
	Suppose that $G$ is a graph, rooted at $x$, which has a rooted immersion of $W_4$ on terminals $T$.
	\begin{enumerate}
		\item \label{t1now4} If $G$ has a segmentation $X_0 \subset X_1 \subset \ldots \subset X_k$ of width four with $x \in X_0$ and $|X_0| \le 2$, then $T \cap X_k = \{x\}$.
		\item \label{w4imparity} If $v \in T\setminus x$ has $d(v)$ even, then there is an edge $e$ incident with $v$ so that $G-e\succ_r W_4$.
		\item \label{sausageok} The graph obtained from $G$ by sausage reducing it has a rooted immersion of $W_4$.
	\end{enumerate}
\end{observation}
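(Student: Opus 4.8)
The three parts are independent and all short; the plan is to take them in the order stated. Throughout, let $\phi$ denote the given rooted immersion, so that $\phi$ sends the centre $c$ of $W_4$ to the root $x$ and sends the rim vertices $r_1,r_2,r_3,r_4$ (in cyclic order) to the other four terminals.

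For part \ref{t1now4} the plan is an extremal argument along the segmentation. Assuming $T\cap X_k\neq\{x\}$, I would pick the least index $i$ with $T\cap X_i\supsetneq\{x\}$ (noting $x\in X_0\subseteq X_i$ for every $i$), and observe that, since $|X_0|\le 2$ and consecutive sets of the segmentation differ by a single vertex, $X_i$ contains $x$ together with exactly one further terminal, which I may call $\phi(r_1)$. Then each of the five paths $\phi(cr_2),\phi(cr_3),\phi(cr_4),\phi(r_1r_2),\phi(r_1r_4)$ has exactly one endpoint in $X_i$, so each uses an edge of $\delta(X_i)$; these five paths are images of distinct edges of $W_4$ and hence pairwise edge-disjoint, so $d(X_i)\ge 5$, contradicting width four. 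Part \ref{w4imparity} is a parity count at $v=\phi(r_j)$: exactly three $\phi$-paths end at $v$, each using one edge of $G$ there, every other $\phi$-path meeting $v$ passes through it and uses two edges there, and the rest of the edges at $v$ are unused; hence the number of edges at $v$ used by $\phi$ is odd, and as $d(v)$ is even some edge $e$ at $v$ is unused, so $\phi$ is still a rooted immersion of $W_4$ in $G-e$.

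For part \ref{sausageok} I would reduce to the local statement: whenever $G[X]$ is a chain of sausages of order $k\ge 3$, some adjacent pair $x',x''\in X$ has $|\{x',x''\}\cap T|\le 1$. Granting this, the sausage shortening identifying $x'$ and $x''$ keeps $\phi$ injective on $V(W_4)$, and since any edge between $x'$ and $x''$ is only ever used by $\phi$ to step between these two vertices, $\phi$ descends to a rooted $W_4$ immersion of the shortened graph; iterating reaches a sausage reduced graph. To prove the local statement I would use that $G[X]$ is a doubled path $x_1x_2\cdots x_k$ with $\delta_G(X)$ consisting of two edges at $x_1$ and two at $x_k$, so $d_G(X)=4$. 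Letting $S\subseteq V(W_4)$ be the set of vertices of $W_4$ mapped into $X$, edge-disjointness of the paths for the $d_{W_4}(S)$ edges of $W_4$ leaving $S$ gives $d_{W_4}(S)\le 4$, which a direct inspection of $W_4$ shows forces $|S|\le 2$ except possibly $S=\{c,r_1,r_2\}$ (up to symmetry); and when $|S|\le 2$, among the $k-1\ge 2$ consecutive pairs $x_\ell x_{\ell+1}$ at most one can contain two terminals, so another pair proves the claim. The remaining case is the crux: if $S=\{c,r_1,r_2\}$ then $d_G(X)=4$ is saturated by the four crossing paths, hence $\phi(cr_1),\phi(cr_2),\phi(r_1r_2)$ all stay inside the doubled path $G[X]$; taking $u$ to be whichever of these three terminals lies strictly between the other two along $x_1\cdots x_k$, the two links of the doubled path at $u$ are each traversed by two of these three paths, so all four edges of $G$ at $u$ are already used, and the crossing path leaving $X$ from $u$ cannot take its first step — a contradiction.

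The only step that is not a routine count is this exclusion of $S=\{c,r_1,r_2\}$ in part \ref{sausageok}, and within it the edge-disjointness bookkeeping on the two links of the doubled path adjacent to the middle terminal $u$; that is where I would be most careful.
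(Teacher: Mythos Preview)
Your arguments for parts~\ref{t1now4} and~\ref{w4imparity} are correct and essentially the same as the paper's (which just cites the cut bound $d_{W_4}(\{c,r_i\})=5$ for the first and calls the second immediate).

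For part~\ref{sausageok} there is a genuine gap in your case analysis. The assertion that $d_{W_4}(S)\le 4$ forces $|S|\le 2$ or $S=\{c,r_1,r_2\}$ is false: you have forgotten the complementary sets, namely $\{c,r_i,r_{i+1},r_{i+2}\}$ (with $d=3$), $\{r_1,r_2,r_3,r_4\}$ (with $d=4$), and $V(W_4)$ (with $d=0$). These must be addressed before you can conclude that some adjacent pair in the sausage carries at most one terminal. The repair is not hard. Whenever $c\in S$ the root $x$ lies in the sausage $X$, so $d_G(x)=4$ and the nested sets $\{x\}\subset\cdots\subset X$ form a width-$4$ segmentation; part~\ref{t1now4} then gives $T\cap X=\{x\}$, i.e.\ $|S|=1$, disposing of all cases with $c\in S$ (and incidentally showing that your ``crux'' case $S=\{c,r_1,r_2\}$ is in fact vacuous). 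If $c\notin S$ and $|S|=4$ with $k\ge 5$, some sausage vertex is a non-terminal and you are done; the only case needing work is $k=4$ with $S=\{r_1,r_2,r_3,r_4\}$. There the four spoke paths saturate $\delta(X)$, and your own parity count (degree $4$, three paths ending, hence no path passing through) applied at \emph{every} vertex of $X$ shows that the spoke path from the interior terminal $x_2$ has no way to reach a boundary edge of $X$ without passing through $x_1$ or $x_3$---a contradiction. With these additions your argument is complete; note that the paper's own proof is a one-line pointer to part~\ref{w4imparity} and the edge-connectivity of $W_4$, so your route is more explicit and somewhat different in flavour.
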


\begin{proof}
	The first part follows from the fact that for every set $X \subset V(W_4)$ with $|X| = 2$ which contains the center we have $d(X)= 5$. The second part is immediate from our definitions, and the last follows from part \ref{w4imparity} and the edge-connectivity of $W_4$.
\end{proof}

\begin{proof}[Proof of the `if' direction of Theorem \ref{rw4-thm}]
	We show that graphs of types 1, 2, 3, or $4$ do not immerse rooted $W_4$. For graphs of type 1, this is immediate from part \ref{t1now4} of the previous observation. To verify this for graphs of type 3, by part \ref{sausageok} of the previous observation we only need to show that graphs in Figure \ref{fig:rw4-t3} do not have a rooted immersion of $W_4$. This, as well as verifying the statement for graphs of type 4, is easy enough to do by hand, but we have used a computer to do so.\footnote{The code is available at @}
	
	So let $G$, rooted at $x$, be type 2 relative to $W$, and suppose (for a contradiction) that $G\succ_r W_4$ with $T$ as terminals. By part \ref{sausageok} of the above observation, we may assume $G$ is sausage reduced. Suppose that there is a chain of sausages $G[\{y, z\}]$ in $V(G)\setminus (W\cup x)$. Note that $|\{y, z\}\cap T| \le 1$, otherwise it follows from part \ref{w4imparity} of the previous observation and the internal $4$-edge-connectivity of $W_4$ that the graph $G'$ obtained from $G$ by deleting one copy of the edge $yz$ contains a rooted immersion of $W_4$. This, however, is impossible since $G'$ has type 1.	Therefore, if we let $G''$ be the graph obtained from $G$ by splitting off any chain of sausages  disjoint from $W \cup x$ (if present) down to only one vertex, then $G''\succ_r W_4$. This, immediately gives a contradiction in the cases where $G$ has type 2C, or $|W|=1$. In other cases, $|W|=2$, and $|V(G'')| = 5$, so every vertex in $G''$ is a terminal of $W_4$. Then,  by part \ref{w4imparity} of the above observation, an edge incident to each vertex in $V(G'')\setminus (W \cup x)$ may be removed while an immersion of $W_4$ is preserved. However, in the resulting graph either $d(x)<4$, or there is an internal 3-edge-cut, so this is impossible.
\end{proof}

\subsection{Four edge cuts}
We now embark on proving the `only if' direction of the main result of this section, which will be done through a series of lemmas. This subsection concerns internal 4-edge-cuts in a minimum counterexample to the main theorem.  Our goal will be to show that in a minimum counterexample $G=(V,E)$, every edge-cut $\delta(X)$ with $|X|, |V \setminus X| \ge 3$ satisfies $d(X) \ge 5$.  We begin by considering the case $|X|=2$.  

%
\begin{lemma}
	\label{w4-near-root}
	Let $G=(V,E)$ with root vertex $x$ be a counterexample to Theorem \ref{rw4-thm} with  $|V|$ minimum.  Then $d( \{x, y \} ) \ge 5$ for every $y \in V \setminus \{x\}$.  
\end{lemma}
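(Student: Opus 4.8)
The plan is to assume, for contradiction, that some $y \in V \setminus \{x\}$ has $d(\{x,y\}) = 4$, and then to collapse the pair $\{x,y\}$ to a single root vertex. Two preliminary remarks guide the argument. First, since $G$ is $3$-edge-connected, $e(x,y) = 0$ would force $d(x) \le 1$ or $d(y) \le 1$, so $e(x,y) \ge 1$. Second, since the centre of $W_4$ has degree $4$, if $d(x) = 3$ then $G \not\succ_r W_4$ automatically, so that case must be disposed of separately; I would do this by applying the structure results for (rooted) $K_4$-free graphs (Corollary \ref{D3-wo*} and Corollary \ref{cor-rk4}) with a second root chosen among the few neighbours of $x$, together with the exhaustive small-graph check, to see that such a $G$ is forced to have type $1$, $3$, or $4$ --- contradicting that it is a counterexample. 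So from now on assume $d(x) \ge 4$.

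Next, let $G' := G . \{x,y\}$ and regard the identified vertex $r$ as its root. Then $d_{G'}(r) = d_G(\{x,y\}) = 4$, and since every edge cut of $G'$ is an edge cut of $G$, the graph $G'$ is again $3$-edge-connected and internally $4$-edge-connected, with $|V(G')| = |V| - 1$. Write $R = V \setminus \{x,y\}$ and $p = e_G(x,R)$, $q = e_G(y,R)$, so $p + q = 4$. Because $d(x) = e(x,y) + p \ge 4$ and $p = 4 - q$, we get $e(x,y) \ge q$; hence, mapping $r \mapsto x$, routing the $q$ cut-edges incident with $y$ along $q$ distinct $x$--$y$ edges, routing the $p$ cut-edges incident with $x$ trivially, and fixing $G'[R] = G[R]$, we exhibit a rooted immersion $(G;x) \succ_r (G';r)$.

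Since $G$ is a counterexample it does not immerse rooted $W_4$, so by transitivity neither does $G'$. If $|V(G')| \ge 5$ then, by minimality of $G$, the graph $G'$ must have one of the types $1$--$4$ (the case $|V| = 5$, where $|V(G')| = 4$, is covered by the small-graph check). Here the constraint $d_{G'}(r) = 4$ is decisive: in every type-$2$ graph the root has degree at least $5$, so $G'$ is not type $2$, and the figures for types $3$ and $4$ leave only a short list of possibilities with a degree-$4$ root. In each surviving case --- and, principally, for type $1$, where $d_{G'}(\{r\}) = 4$ lets us take the head of the segmentation to be $\{r\}$ --- one ``expands'' $r$ back into $\{x,y\}$: this alters no edge cut that is disjoint from, or contains, $\{x,y\}$, so it transports the type-$1$/$3$/$4$ structure of $G'$ to one for $G$, with $x$ lying in the head whenever that is required. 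Thus $G$ itself has one of the types, contradicting that $G$ is a counterexample.

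The main obstacle I expect is this last step --- the case-by-case ``un-expansion'', i.e.\ verifying that with its root pinned to degree exactly $4$ a type-$1$, $3$, or $4$ graph $G'$ can only arise by contracting $\{x,y\}$ in a graph $G$ that itself has one of the types --- together with the bookkeeping needed to settle the degenerate $d(x) = 3$ configurations via the $K_4$-structure corollaries and the computer search.
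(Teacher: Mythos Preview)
Your approach is the paper's: contract $X=\{x,y\}$, apply minimality to $G'=G.X$, use $d_{G'}(r)=4$ to restrict the type, then pull the segmentation back. The paper's execution is much shorter, and one of your shortcuts is circular.

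The $|V|=5$ case cannot be deferred to ``the small-graph check'': Lemma~\ref{w4-code} is proved \emph{after} the present lemma and its hypotheses include the conclusion of Lemma~\ref{w4-no-deep-4ecut}, which in turn relies on Lemma~\ref{w4-near-root}. The paper instead observes that when $|V|=5$ the single pair $(X,\,V\setminus X)$ is already a $(2,3)$-segmentation of width~$4$, so $G$ is type~1 outright --- no contraction is needed.

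For $|V|\ge 6$ there is no residual case analysis over types $3$ and $4$: in every graph of type $2$, $3$, or $4$ the root has degree at least~$5$, so $d_{G'}(r)=4$ forces $G'$ to be type~1. Observation~\ref{seg-obs}\ref{dx0m} then shrinks the head to $\{r\}$, and expanding $r$ back to $\{x,y\}$ yields a $(2,3)$-segmentation of $G$ with $x$ in the head. That is the entire ``un-expansion''; there is no case list.

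Your $d(x)=3$ digression flags a genuine subtlety --- the routing $(G;x)\succ_r (G';r)$ requires $e(x,y)\ge q$, which is equivalent to $d(x)\ge 4$, and the paper's one-line appeal to internal $4$-edge-connectivity does not literally cover $d(x)=3$ --- but $d(x)\ge 4$ is effectively a standing hypothesis of Section~\ref{sec-rw4} (the computer check in Lemma~\ref{w4-code} is run only under $d(x)\ge 4$). Your proposed detour through the $K_4$ corollaries is not the intended fix and is not obviously sufficient.
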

\begin{proof}
	Suppose (for a contradiction) this is false and choose $y \in V \setminus \{x\}$ so that $d(\{x, y\}) < 5$. Let $X = \{x, y\}$ and note that the internal $4$-edge-connectivity of $G$ implies $d(X) = 4$.  If $|V(G)| = 5$, then $G$ has a $(2, 3)$-segmentation of width four relative to $(X, V\setminus X)$, and thus $G$ has type $1$. So we must have $|V(G)| \ge 6$. Consider $G' = G. X$ (with root $X$), and note that it follows from internal $4$-edge-connectivity that $G$ that $G\succ_r G'$, so $G'\nsucc_r W_4$. By minimality of $G$, the Theorem holds for $G'$, so $G'$ must have type 1, 2, 3, or 4. Since $X\in V(G')$ has degree four, $G'$ can only be type $1$, and moreover by Observation \ref{seg-obs} we may assume $G'$ has a $(1, 3)$-segmentation of width four relative to $( \{X\}, W)$, for some $W\subset V(G')$. It now follows that $G$ has type 1 relative to $(X, W)$---a contradiction.
\end{proof}

\begin{lemma}
	\label{w4-4ecut-Xc-sausage}
	Let $G=(V,E)$ with root vertex $x$ be a counterexample to Theorem \ref{rw4-thm} with  $|V|$ minimum.  
	Let $\delta(X)$ be a $4$-edge-cut in $G$ with $x \in X$ and $|X|\ge 3$. If $|V\setminus X| \ge 3$, then $G[V\setminus X]$ is a chain of sausages.
\end{lemma}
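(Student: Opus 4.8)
Let $G=(V,E)$ be a minimum counterexample, let $\delta(X)$ be a $4$-edge-cut with $x\in X$, $|X|\ge 3$, and $|V\setminus X|\ge 3$, and set $Y = V\setminus X$. The goal is to show $G[Y]$ is a chain of sausages, i.e.\ $G.X$ is a doubled cycle (of the appropriate length). The natural move is to contract $X$: form $H = G.X$ with the identified vertex called $x'$, so $d_H(x') = 4$. By internal $4$-edge-connectivity of $G$ and the fact that $|X|\ge 3$ (so contracting $X$ is realized by splittings at the terminal), we have $G\succ_r H$ where we now declare $x'$ to be the root; hence $H\nsucc_r W_4$. If $|V(H)|\ge 5$ we may apply Theorem \ref{rw4-thm} (after checking $H$ inherits the connectivity hypotheses — internal $4$-edge-connectivity of $H$ follows since any internal cut of $H$ with $|V(H)\setminus\cdot|\ge 2$ pulls back to one in $G$; $3$-edge-connectivity is similar, using $d_H(x')=4\ge 3$), and if $|V(H)|=4$ we can argue directly or appeal to the earlier analysis of small cases. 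So assume $H$ has one of the types $1,2,3,4$.

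The heart of the argument is to rule out types $1$, $3$, $4$ and to show type $2$ forces the conclusion. Since $d_H(x') = 4$, type $1$ is impossible unless $x'$ lies in the head $U$ of a $(2,3)$-segmentation of width four with $x'\in U$; but then pulling the segmentation back along the contraction, together with the width-four cuts, would exhibit a type-$1$ segmentation of $G$ relative to $(X\cup(U\setminus x'),\,W)$ — more carefully, since $d_H(x')=4$, part \ref{dx0m} of Observation \ref{seg-obs} lets us take the head to be $\{x'\}$, and then $G$ has type $1$ relative to $(X, W)$, contradicting that $G$ is a counterexample. For types $3$ and $4$: these are sporadic graphs on few vertices in which every vertex has degree $3,4,5$ (one can read this off the figures), and in particular no vertex of degree exactly $4$ can be "expanded" back into a set $X$ with $|X|\ge 3$ while keeping $G$ $3$-edge-connected and internally $4$-edge-connected and $W_4$-immersion-free — this is where I expect to invoke the computer search, or a short case check, to say that replacing the degree-$4$ vertex $x'$ by any such $X$ produces a graph that does immerse rooted $W_4$. (Alternatively: Lemma \ref{w4-near-root} already tells us $d(\{x,y\})\ge 5$ for all $y$, which constrains how $X$ can attach internally and should kill the type $3$/type $4$ possibilities quickly.)

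That leaves type $2$. Here $H = G.X$ is obtained from a doubled cycle $C$ by identifying a set $W$ of size $1$ or $2$ and adding one or two chords at $x'$ according to subcase (2A), (2B), (2C). Since $d_H(x') = 4$ and $x'$ is the image of $X$, I need: (i) $W$ cannot be contained in $X$'s image side in a way that changes the count — in fact $x'\notin W$, so $W\subseteq Y$; (ii) in each of (2A),(2B),(2C), the vertex $x'$ sits on the doubled cycle $C$ with exactly its two cycle-edges plus the one or two chords giving degree $4$ — check: in (2A) $x'$ has degree $2(\text{cycle}) + 1(\text{chord } x'w) $, wait that's odd, so actually degree considerations pin down which subcase occurs and force $w\in W$ with $|W|=1$, or handle $|W|=2$ by noting the two cycle edges at $x'$ are the only ones. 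Once we know $x'$ meets $\delta_H(x')$ in exactly these four edges, the four edges of $\delta_G(X)$ correspond to them, and $G.X = H$ being (a doubled cycle plus chords) means... hmm — actually I want $G.X$ itself to be a doubled cycle, which is type $B$-like, not type $2$ in general. So the real content is: the extra chords at $x'$ in (2A)/(2B)/(2C) would have to be realized inside $G$ by paths from $X$, and combined with the doubled cycle structure this lets one build a rooted $W_4$ immersion in $G$ — contradiction — \emph{unless} there are no chords, i.e.\ unless $G.X$ is exactly the doubled cycle $C$, which is precisely the statement that $G[Y]$ is a chain of sausages (of order $|Y|$).

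**Main obstacle.** The delicate point is the type $2$ analysis: disentangling the role of the identified set $W$ (which lives on the $Y$-side and has nothing to do with $X$) from the contracted vertex $x'$, and then showing that the chord edges $uw$, $uv$, $vw$ appearing in subcases (2A)--(2C) genuinely cannot coexist with the doubled-cycle structure without producing a rooted $W_4$ immersion in $G$ — equivalently, that in a minimum counterexample the contraction $G.X$ must be a \emph{chordless} doubled cycle. I expect this to come down to: if $G.X$ has type 2 but is not a doubled cycle, then $d_H(x')\ge$ something forcing a structure in $G$ that routes a $W_4$ immersion through $X$ and back, using the $3$-edge-connectivity of $G$ to find the necessary edge-disjoint paths, and using Lemma \ref{w4-near-root} to block the degenerate configurations. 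The routine parts — checking $H$ inherits connectivity, the $|V(H)|=4$ base case, and the figure-reading for types $3$,$4$ — I would dispatch briefly, deferring the sporadic checks to the computer as the paper does elsewhere.
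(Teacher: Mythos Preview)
Your approach contracts the wrong side of the cut, and the resulting case analysis does not close.

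When you form $H=G.X$ with root $x'$, you correctly observe $d_H(x')=4$. But this single fact already kills types 2, 3, and 4: as the paper notes in the proof of Lemma~\ref{w4-near-root}, a root of degree four forces type~1. So all of your effort on the type~2 analysis (which you flag as the ``main obstacle'') is vacuous, and the only live case is type~1 --- which you dismiss in two lines. That dismissal is where the argument breaks. If $H$ has type~1 relative to $(\{x'\},W)$, pulling the segmentation back to $G$ gives a width-four segmentation relative to $(X,W)$. But $|X|\ge 3$, so this is \emph{not} type~1 for $G$ (the definition requires the head to have size at most two), and you get no contradiction. Nor does a $(1,3)$-segmentation of width four on $H$ force $H$ to be a doubled cycle: there are internally $4$-edge-connected graphs with a degree-four root and a $(1,3)$-segmentation of width four that are not doubled cycles, so you cannot conclude $G[Y]$ is a chain of sausages either. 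The lemma is a positive structural assertion about $Y$, not merely a contradiction, and your reduction extracts no usable information about the internal structure of $Y$.

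The paper's proof works on the opposite side and uses the $D_m$ machinery rather than a recursive appeal to Theorem~\ref{rw4-thm}. First contract $Y$ to get $G'=G.Y$ with roots $(x,y)$, and apply Theorem~\ref{Dm-thm} for $m=4$: if $G'\nsucc_r D_4$ then $G'$ has type $A_4$ (type $B_4$ is excluded since $d_{G'}(y)=4$), which yields a set $U\ni x$ with $|U|=2$ and $d_G(U)=4$, contradicting Lemma~\ref{w4-near-root}. So $G'\succ_r D_4$. The $D_4$ immersion partitions $\delta(X)$ into two pairs $\{e,e'\}$ and $\{f,f'\}$ according to whether the corresponding path ends at a root or a non-root of $D_4$. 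On the $Y$-side, introduce two new degree-two vertices $a,b$ by pinching these pairs, and let $G^*$ be the resulting graph on $Y\cup\{a,b\}$ with roots $(a,b)$. A rooted $D_2$ immersion in $G^*$ would combine with the $D_4$ immersion in $G'$ to give $G\succ_r W_4$, so $G^*\nsucc_r D_2$; Theorem~\ref{Dm-thm} (with $m=2$) then forces $G^*$ to have type $A_2$, and since both roots have degree two, $G^*$ is a doubled path. That is exactly the statement that $G[Y]$ is a chain of sausages.
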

\begin{proof}
	Let $G'= G. (V\setminus X)$, and denote by $y$ the vertex resulting from identifying $V\setminus X$. Declare $G'$ to be rooted at $(x, y)$. First, we show that $G' \succ_r D_4$. Note that $G'$ is internally $4$-edge-connected, and $|V(G')| \ge 4$. So, if $G'\nsucc_r D_4$, it follows from Theorem \ref{Dm-thm} that $G'$ has type $A_4$ or type $B_4$. However, since $d_{G'} (y)=4$, $G'$ must be type $A_4$. So there exists $U\subset V(G')$ such that $x\in U$, $|U| = 2$ and $G'$ has type $A_4$ relative to $U, \{y\}$. This in particular implies that $d_{G}(U) =d_{G'}(U)=4$ which contradicts the previous lemma, and thus $D_4 \prec_r G'$.
	
	Fix a rooted immersion of $D_4$ with roots $(x_0,x_1)$ in $G'$ with roots $(x,y)$.  Let $P,P'$ be the paths of $G'$ corresponding to the two edges of $D_4$ between $x_1$ and $x_0$ and let $Q,Q'$ be the paths of $G'$ corresponding to the two edges of $D_4$ between $x_1$ and the non-root vertices.  Define the edges $\{e, e'\}= \delta_{G'} (y) \cap (E(P) \cup E(P'))$, and  $\{f, f'\}=  \delta_{G'} (y)\cap (E(Q) \cup E(Q'))$. Now let $G''$ be the graph obtained from $G$ by subdividing $e, e'$ ($f, f'$) with a new vertex, and then identifying the degree two vertices to a new vertex, $a$ ($b$), see Fig. \ref{fig:d4d2rw4-Gz}.
\begin{figure}[htbp]
	\centering
	\begin{subfigure}[b]{0.35\textwidth}
		\centering
		\includegraphics[height=2.5cm]{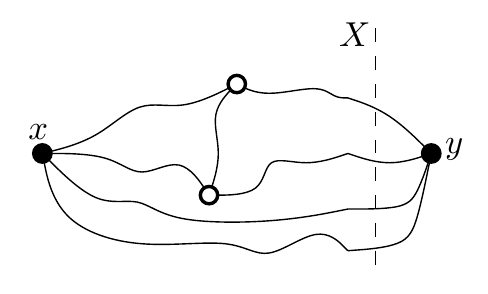}
		\caption{$G'\succ_r D_4$}
		\label{fig:d4d2rw4-Gp}
	\end{subfigure}
	\hfill
	\begin{subfigure}[b]{0.3\textwidth}
		\centering
		\includegraphics[height=2.5cm]{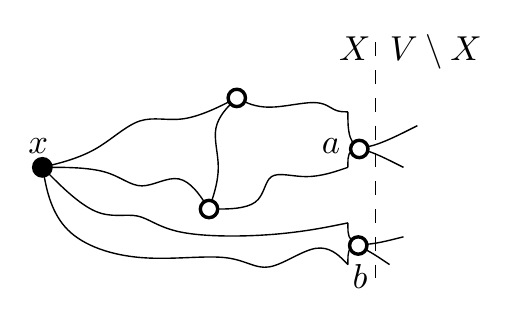}
		\caption{Graph $G''$}
		\label{fig:d4d2rw4-Gz}
	\end{subfigure}
	\hfill
	\begin{subfigure}[b]{0.3\textwidth}
		\centering
		\includegraphics[height=2.5cm]{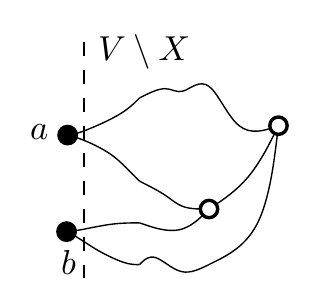}
		\caption{Immersion of $D_2$ in $G^*$}
		\label{fig:d4d2rw4-Gs}
	\end{subfigure}
	\hfill
	\begin{subfigure}[b]{0.66\textwidth}
		\centering
		\includegraphics[height=2.5cm]{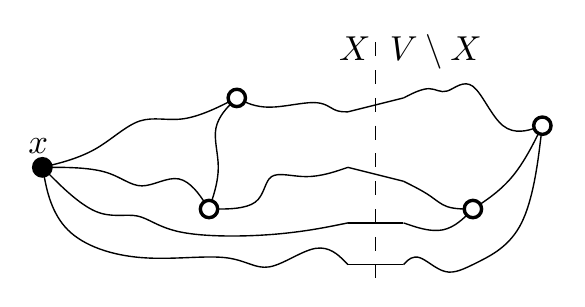}
		\caption{$G' \succ_r W_4$ and $G^* \succ_r D_2$ implies $G\succ_r W_4$}
		\label{fig:d4d2rw4-G}
	\end{subfigure}
	\caption{}
\end{figure}	
%
	
	We define $G^*= G''[(V\setminus X) \cup \{a, b\}]$, with $a, b$ as its root vertices. Observe that (since $G'\succ_r D_4$) if there is a rooted immersion of $D_2$ in $G^*$, then $G\succ_r W_4$ (see Figure \ref{fig:d4d2rw4-G}), so $D_2 \nprec_r G^*$. Note it follows from $|V\setminus X| \ge 3$ and the internal edge-connectivity of $G$ that $G^*$ satisfies the hypothesis of Theorem \ref{Dm-thm} for $m=2$. So, $G^*$ is type $A_2$ or type $B_2$, and since $d_{G^*} (a)=2$, $G^*$ must be type $A_2$. Now since $d_{G^*}(a)= d_{G^*} (b) =2$, $G^*$ is a doubled path.  It follows that $G[V \setminus X]$ is a chain of sausages as desired.
\end{proof}

We now use the Lemma above to prove the main conclusion of this subsection.
\begin{lemma}
	\label{w4-no-deep-4ecut}
	Let $G=(V,E)$ with root vertex $x$ be a counterexample to Theorem \ref{rw4-thm} with  $|V|$ minimum.  
	Every $X \subset V$ with $|X|\ge 2, |V \setminus X| \ge 3$ with $x \in X$ satisfies $d(X) \ge 5$.
\end{lemma}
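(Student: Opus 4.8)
The plan is to argue by contradiction: suppose some $X\subset V$ with $|X|\ge 2$, $|V\setminus X|\ge 3$, $x\in X$ has $d(X)=4$ (equality by internal $4$-edge-connectivity). By Lemma \ref{w4-near-root} we know $|X|\ge 3$, so Lemma \ref{w4-4ecut-Xc-sausage} applies and tells us $G[V\setminus X]$ is a chain of sausages; write $Y=V\setminus X$ and let $|Y|=k\ge 3$. The idea is that a chain of sausages of order $k$ hanging off a $4$-edge-cut is a very rigid object — essentially a doubled path whose two ends are attached to $X$ by the four cut edges — and we want to either contract it down and inherit a structure theorem from the smaller graph, or directly build the rooted $W_4$ immersion. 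First I would form $G'=G.Y$ with the identified vertex $y$ of degree $4$, keeping $x$ as root; by internal $4$-edge-connectivity $G\succ_r G'$, so $G'\nsucc_r W_4$, and since $|V(G')|<|V|$ (as $|Y|\ge 3$) and $|V(G')|\ge 5$ (since $|X|\ge 3$ and we add $y$, using $|X|\ge 4$ — see the subtlety below) the minimality of $G$ gives that $G'$ has one of the types $1,2,3,4$.

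Next I would case on which type $G'$ has, using the extra information that $\delta_{G'}(y)$ is a $4$-edge-cut at a single vertex $y$, and that the chain of sausages $G[Y]$ can be ``re-expanded'' in $G$. In each case the goal is a contradiction. If $G'$ has type $1$, then by part \ref{dx0m} of Observation \ref{seg-obs} (applied at the degree-$4$ vertex $y$, which must lie in the tail $W$ since the head contains $x$ and has size $\le 2$) we get a $(2,3)$-segmentation of $G'$ with a width-four segmentation; combining the segmentation of $G'$ with the internal structure of the doubled path $G[Y]$ — which itself is traversed by a nested family of $4$-edge-cuts, one peeling off each sausage vertex — should yield a genuine type-$1$ segmentation of $G$ itself, contradicting that $G$ is a counterexample. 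If $G'$ has type $2$, $3$, or $4$, then $y$ is one of the finitely many vertices of a small sporadic-or-doubled-cycle graph; in the type-$2$ case $y$ has even degree $4$, so part \ref{w4imparity} of Observation \ref{containw4obs} lets us delete an edge at $y$ and preserve (the non-existence of) a $W_4$ immersion, and we push toward type $1$ as in the `if'-direction proof; in the type-$3$ and type-$4$ cases we again expand $Y$ back into a chain of sausages and either land in a graph the computer check has already classified, or directly route a rooted $W_4$ immersion through $X$ using the four cut edges together with the doubled-path structure of $G[Y]$ (four edge-disjoint $x$-to-boundary connections is exactly the budget needed to realize the four rim-spokes of $W_4$ when the center $x$ sits inside $X$).

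The main obstacle I anticipate is two-fold. First, there is a genuine boundary subtlety: to apply minimality to $G'$ I need $|V(G')|\ge 5$, i.e. $|X|\ge 4$; the case $|X|=3$ (so $|Y|=k\ge 3$, $|V|\ge 6$) has to be handled separately, probably by an explicit argument showing that a $3$-vertex side $X$ containing the root $x$, attached by four edges to a chain of sausages, already forces $W_4\prec_r G$ (there is lots of room since $d(x)\ge 4$ and $G[X]$ contributes extra edges). Second, and more seriously, in the type-$2$/$3$/$4$ cases I must carefully verify that re-expanding the identified vertex $y$ back into the chain of sausages never destroys the hypotheses needed to reapply Theorem \ref{rw4-thm} or the computer classification — in particular that internal $4$-edge-connectivity and the $|V|\ge 5$ bound survive, and that sausage-reduction of the expanded graph brings us back to exactly the sporadic list. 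Managing the interaction between the nested $4$-edge-cuts inside $G[Y]$ and the segmentation/structure coming from $G'$ is where the bookkeeping will be heaviest, but conceptually each such case collapses either to type $1$ for $G$ or to an explicit $W_4$ immersion, so no new idea beyond Lemmas \ref{w4-near-root} and \ref{w4-4ecut-Xc-sausage}, Observations \ref{seg-obs} and \ref{containw4obs}, and Theorem \ref{Dm-thm} should be required.
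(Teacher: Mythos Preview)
Your overall strategy---reduce to a smaller graph, invoke minimality, then case on types 1--4---is right, but the specific reduction you choose (contract $Y=V\setminus X$ to a single vertex $y$) is not the one the paper uses, and it makes the case analysis considerably harder than necessary.

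The paper instead lets $G'$ be the \emph{sausage reduction} of $G$. Lemma~\ref{w4-4ecut-Xc-sausage} guarantees $G[Y]$ is a chain of sausages of order $\ge 3$, so sausage reduction genuinely shrinks $G$; by Observation~\ref{containw4obs}\ref{sausageok} it preserves the absence of a rooted $W_4$, and Lemma~\ref{w4-near-root} keeps $x$ out of every sausage. Now the case split is nearly free: types~2 and~3 are \emph{defined} modulo sausage reduction, so if $G'$ has one of them then so does $G$; if $G'$ has type~1 relative to $(U,W)$, then $U$ is a set of size $\le 2$ in $G$ containing $x$ with $d_G(U)=4$, contradicting Lemma~\ref{w4-near-root}; only type~4 requires a short direct check (the relevant type-4 graphs, once the sausage reduction is undone, are seen to be type~3).

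By contrast, your contraction $G'=G.Y$ forces you to track the special vertex $y$ through each type. Your type-1 argument has a concrete gap: you assert $y$ must lie in the tail $W$, but nothing rules out $U=\{x,y\}$ (which gives $d_G(\{x\}\cup Y)=4$ and does \emph{not} directly yield a type-1 segmentation of $G$ or a contradiction with Lemma~\ref{w4-near-root}), nor $y$ being a middle vertex of the segmentation. Your type-2 sketch misapplies Observation~\ref{containw4obs}\ref{w4imparity}, which is about preserving an existing $W_4$ immersion, not about ruling one out. And the boundary case $|X|=3$ that you flag is genuinely awkward under your reduction, whereas under sausage reduction it does not arise, since the reduced chain still contributes two vertices. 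In short, replacing ``contract $Y$'' by ``sausage reduce $G$'' dissolves every obstacle you list in your final paragraph.
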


\begin{proof}
Suppose (for a contradiction) that there exists $X \subset V$ with $x \in X$ satisfying $|X| \ge 2$ and $|V \setminus X| \ge 3$ and $d(x) \le 4$.  It then follows from the previous lemma that the graph $G'$ obtained from $G$ by sausage reduction satisfies $|V(G')| < |V|$.  Note that Lemma \ref{w4-near-root} implies that the root vertex $x$ is not contained in any sausage of $G$ and this resulting graph $G'$ must also satisfy $|V(G')| \ge 5$.  Now $G\succ_r G'$, and thus $G' \nsucc_r W_4$.  By the minimality of the counterexample $G$, we find that $G'$ has one of the types 1, 2, 3, or 4. If $G'$ is type $1$ relative to some $(U, W)$, then $\delta_G(U)$ would be a 4-edge-cut in $G$ which contradicts Lemma \ref{w4-near-root}. Suppose $G'$ is type $2$ relative to $W$ and subject to this $W$ is minimal.  If $G'[Y]$ is a chain of sausages in $G'$ (so, is of order at most two) the minimality of $Y$ implies that $W \cap Y= \emptyset$. Therefore, the reverse of a sausage-shortening applied to $Y$ results in another graph of type 2.  By repeating this argument, we deduce that the original graph $G$ is also type 2---a contradiction.  Note that $G'$ is not isomorphic to one of the graphs in Figure \ref{fig:rw4-t3} either, else $G$ would be type $3$. 	So $G'$ is type $4$, and note that $G'$ has at least one pair of neighbours each of degree four, with two edges between them (since sausage reduction was applied nontrivially to $G$). Thus, $G'$ must be obtained from the leftmost graph in Figure \ref{fig:rw4-t4} by adding one more copy of one edge incident to $z$, say $zy$, and perhaps adding one more copy of $uy'$ and/or $uy''$. In any case it follows that $G$ has type 3, and this final contradiction completes the proof of the Lemma.	
\end{proof}

\subsection{Computation for small graphs}
The main result of this subsection is Lemma \ref{w4-code} which asserts that the Theorem holds for graphs on at most eight vertices. This has been verified using a computer, and our code can be found on the arXiv. The simple observation below shows that the verification needs to be done only for finitely many graphs.
\begin{observation}
	\label{e-mult-cap}
	Let $G, H$ be rooted graphs, and let $u, v \in V(G)$ satisfy $e(u, v) > |E(H)|$. Then $G \succ_r H$ if and only if the graph obtained from $G$ by deleting one copy of $uv$ edge has a rooted immersion of $H$.
\end{observation}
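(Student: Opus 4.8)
The plan is to handle the two implications separately, the reverse being immediate and the forward one a short counting argument.

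For the reverse implication, let $G'$ denote the graph obtained from $G$ by deleting one copy of the $uv$-edge. Then $G'$ is a subgraph of $G$ with the same root tuple, and immersion is monotone under passing to supergraphs: a sequence of splits and deletions taking $G'$ to a graph isomorphic to $H$ (via the prescribed root-respecting isomorphism) can be prepended by the deletion of the extra $uv$-edge from $G$, so $G' \succ_r H$ forces $G \succ_r H$. Nothing here is delicate.

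For the forward implication I would fix a witnessing rooted immersion of $H$ in $G$, presented in the path form: a root-respecting injection $\phi$ on vertices together with a family $\{\phi(f) : f \in E(H)\}$ of pairwise edge-disjoint paths, $\phi(f)$ joining the images of the ends of $f$. The crux is that each $\phi(f)$ is a simple path, hence meets each of $u$ and $v$ at most once, and therefore traverses at most one edge of $E_G(u,v)$ (traversing such an edge places $u$ and $v$ consecutively on the path, and a second such edge would repeat both vertices). Summing over $f \in E(H)$, the paths of the immersion collectively use at most $|E(H)|$ edges of $E_G(u,v)$, and since $e_G(u,v) > |E(H)|$ there is some edge $e \in E_G(u,v)$ used by no $\phi(f)$. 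Deleting this $e$ leaves $\phi$ and all the paths untouched, so $G - e \succ_r H$; and $G-e$ is exactly the graph obtained from $G$ by deleting one copy of the $uv$-edge (note that, since $|E(H)| \ge 1$ in every application, at least one parallel $uv$-edge survives, so $G-e$ is still connected and carries the same roots).

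There is no substantive obstacle: the single point needing a word of care is the claim that a simple path cannot use two parallel $uv$-edges, which is justified above by simplicity. The observation is purely a bookkeeping device — its role is to cap the number of parallel classes that can possibly matter for an immersion of a fixed $H$, so that the exhaustive small-graph verification underlying Lemma \ref{w4-code} need only be carried out over finitely many graphs.
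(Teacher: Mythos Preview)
Your argument is correct and is the natural one; the paper states this observation without proof, so there is nothing to compare against beyond noting that your counting argument (each simple path in the immersion meets $E_G(u,v)$ in at most one edge, hence at most $|E(H)|$ such edges are used) is exactly the intended justification.
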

\begin{lemma}
	\label{w4-code}
	If $ G = (V, E) $ is a counterexample to Theorem \ref{rw4-thm} with $|V|$ minimum, then $|V|\ge 9$.
\end{lemma}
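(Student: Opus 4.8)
The plan is to argue by contradiction. Suppose $G=(V,E)$, rooted at $x$, is a counterexample to Theorem~\ref{rw4-thm} with $|V|$ minimum, suppose $|V|\le 8$, and, among all counterexamples on $|V|$ vertices, choose one with $|E|$ as small as possible. Since the `if' direction of Theorem~\ref{rw4-thm} has already been established, $G$ being a counterexample means exactly that $G\nsucc_r W_4$ while $G$ has none of the types $1$, $2$, $3$, $4$. The argument has two parts: first use Observation~\ref{e-mult-cap} to bound the edge multiplicities of $G$, thereby reducing to a finite list of graphs, and then appeal to the exhaustive computer search.

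For the first part, I claim $e_G(u,v)\le |E(W_4)| = 8$ for every pair $u,v\in V$. Suppose instead $e_G(u,v)\ge 9$, and let $G'$ be obtained from $G$ by deleting one copy of $uv$, so $|V(G')|=|V|$ and $|E(G')|<|E|$. First I would check that $G'$ remains $3$-edge-connected and internally $4$-edge-connected: for any cut $\delta(Z)$, if $u,v$ lie on the same side then $d_{G'}(Z)=d_G(Z)$, while if they are separated then $\delta_G(Z)$ contains all (at least nine) copies of $uv$, so $d_G(Z)\ge 9$ and hence $d_{G'}(Z)\ge 8$; either way the connectivity hypotheses persist, and $|V(G')|\ge 5$. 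By Observation~\ref{e-mult-cap}, $G'\succ_r W_4$ if and only if $G\succ_r W_4$, so $G'\nsucc_r W_4$. Therefore $G'$ is either itself a counterexample on $|V|$ vertices with fewer edges, contradicting minimality of $|E|$, or else $G'$ has one of the types $1$--$4$.

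The step that needs the most care is to deduce, in the latter case, that $G$ itself has a type, contradicting that $G$ is a counterexample. Here I would exploit that the pair $u,v$ has $e_{G'}(u,v)\ge 8$. The graphs underlying types $3$ and $4$, together with the chains of sausages used to build type-$3$ graphs, have every edge multiplicity bounded by a small constant well below $8$, so $G'$ can be neither type $3$ nor type $4$. If $G'$ is type $1$, then every cut of its width-$4$ segmentation has size $4<9$, so $u$ and $v$ lie on the same side of every such cut; hence restoring the $uv$ edge changes no cut of the segmentation, and $G$ inherits the same type-$1$ structure, with the same head still containing $x$. If $G'$ is type $2$ relative to some $W$ with $|W|\le 2$, then every pair of $G'$ other than $W$ has multiplicity at most $3$ (being inherited from a doubled cycle together with at most two extra edges), which forces $\{u,v\}=W$; restoring the $uv$ edge merely adds an edge inside $W$, which disappears when $W$ is identified, so $G$ is again type $2$ relative to $W$. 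In all cases $G$ has a type, the desired contradiction, establishing the multiplicity bound.

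With every edge multiplicity of $G$ now at most $8$, the graph $G$ lies in an explicitly finite family: $3$-edge-connected, internally $4$-edge-connected rooted graphs on $5\le |V|\le 8$ vertices with all edge multiplicities at most $8$. For each member one decides algorithmically whether it contains a rooted immersion of $W_4$ and, if not, whether it has one of the types $1$--$4$; running the code (available on the arXiv) confirms that every such graph satisfies Theorem~\ref{rw4-thm}, so there is no counterexample in this family. This contradicts the existence of $G$, and therefore $|V|\ge 9$. The only genuinely non-routine ingredients are the multiplicity-preservation case analysis above and the correctness of the exhaustive search, in particular its subroutine for deciding rooted immersion.
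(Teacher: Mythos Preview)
Your approach is essentially the paper's: invoke Observation~\ref{e-mult-cap} to cap edge multiplicities at $8$, then hand the resulting finite family to the computer. Your type-preservation case analysis (showing that if $G'=G-uv$ acquires one of the types then so does $G$) is in fact more careful than the paper, which simply asserts that Observation~\ref{e-mult-cap} reduces the verification to a finite check without spelling this out.

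There is one genuine discrepancy, however. The paper's code does \emph{not} search the family you describe. Its search is restricted to rooted graphs that additionally satisfy the conclusion of Lemma~\ref{w4-no-deep-4ecut}: every $Y\subset V\setminus\{x\}$ with $|Y|\ge 3$ has $d(Y)\ge 5$. This extra constraint is exactly what lets the code omit any test for type~1 (every type~1 graph on $\ge 5$ vertices contains a $2$-set $U\ni x$ with $d(U)=4$ and $|V\setminus U|\ge 3$, violating the constraint), and it also cuts the search space down substantially. Since your $G$ is a counterexample with $|V|$ minimum, Lemma~\ref{w4-no-deep-4ecut} does apply to it and you are free to add this constraint; but as written, your sentence ``running the code (available on the arXiv) confirms that every such graph satisfies Theorem~\ref{rw4-thm}'' is not literally justified, because that code was never run on your larger family. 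Insert one line invoking Lemma~\ref{w4-no-deep-4ecut} before the computer step and the argument goes through.
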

\begin{proof}
	Suppose $G$ is a graph with a root vertex $x$, where $d(x) \ge 4$ and $|V (G)| \le 8$. Suppose further that $G$ is $3$-edge-connected, internally $4$-edge-connected, and satisfies the statement of Lemma \ref{w4-no-deep-4ecut}. We will show that $G$ satisfies Theorem \ref{rw4-thm}, i.e. if $G \nsucc_r W_4$, it is either type 2, or is isomorphic to one of the graphs in Figures \ref{fig:rw4-t3}, or \ref{fig:rw4-t4}.
	
	Note that thanks to Observation \ref{e-mult-cap}, the Lemma can get verified through a finite calculation. In fact it follows that to verify the Lemma it suffices to check the finite number of rooted $3$-edge-connected, internally $4$-edge-connected graphs, with edge-multiplicity at most $|E(W_4)|=8$, for which the root vertex $x$ has degree at least four, and for any set $Y \subset V(G)\setminus \{x\}$ with $|Y|\ge 3 $ we have $d (Y) \ge 5$. This calculation is done in Sagemath, and here is a high-level description of the algorithm, which is run for $5 \le n \le 8$.
	\begin{description}
		\item[Step 1.] 	We take the list of connected simple graphs on $n$ vertices, and use it to generate all rooted graphs on $n$ vertices. Then, the rooted graphs which have a rooted immersion of $W_4$ are filtered out.
		\item[Step 2.] 	For any rooted graph $G $ surviving from Step 1, \verb|repair(G)| generates a list consisting of all edge-minimal rooted multigraphs $G'$ such that:
		\begin{itemize}
			\item
			the underlying simple graph of $G'$ is $G$,
			\item
			$d_{G'} (x) \ge 4$, where $x$ is the root vertex of $G'$,			
			\item
			$G'$ is 3-edge-connected and internally 4-edge-connected,
			\item
			for any internal edge-cut $\delta(Y)$ with $x\notin Y$ and $|Y| \ge 3$ we have $d(Y)\ge 5$,
			\item
			$G'$ does not have a rooted immersion of $W_4$.
		\end{itemize}
	\item[Step 3.] Suppose the simple rooted (connected) graph $G$ is such that $\mathcal{G}_1=$ \verb|repair(G)| is nonempty. Then, using $\mathcal{G}_1$, we generate $\mathcal{G}_2 =$ \verb|obstruction(G)| which consists of all rooted multigraphs whose underlying simple rooted graph is $G$, meet the edge-connectivity conditions that the graphs in $\mathcal{G}_1$ satisfy, have edge-multiplicity at most eight, and do not immerse rooted $W_4$.
	\item[Step 4.] Every graph in $\mathcal{G}_2$ is tested if it has type 2, or is isomorphic to one of the graphs in Figures \ref{fig:rw4-t3} or \ref{fig:rw4-t4}.
	\end{description}	
	The calculation is done rather fast. The calculation for every $n \in \{5, 6, 7\}$ took a  desktop computer less than a minute. However, the calculation for $n =8$ took much longer---almost 20 minutes. It took the computer one minute to carry out step 1, i.e. to check the nearly 72,500 connected simple rooted graphs on eight vertices for a $W_4$ immersion, thereby giving a list \verb|N8| of almost 40,000 simple rooted connected graphs on eight vertices which do not immerse rooted $W_4$. Then 20 minutes was spent on carrying out steps 2, 3 for every graph in \verb|N8|. Since no obstruction is found for $n =8$, step 4 is not performed for this case.
\end{proof}
\subsection{Five edge cuts}
The next lemma establishes two local properties for a minimum counterexample. The lemma proves to be quite helpful in handling 5-edge-cuts, as well as in the final part of the proof of the main theorem.
\begin{lemma}
	\label{rw4-local}
	Let $ G = (V, E) $ be a a graph rooted at $x$ which is a counterexample to Theorem \ref{rw4-thm} with $|V|$ minimum. Then 
	\begin{enumerate}[label=(\arabic*)]
		\item 
		\label{w4-no-greedy-nbr}
		There do not exist $v, w\in V\setminus x$ such that $e (v, w)\ge \frac{1}{2} d(w)$.
		\item
		\label{w4-4ecut}
		Suppose $\delta (Y)$ is an internal $4$-edge-cut in $G$, with $|Y| \le |V\setminus Y|$. Then $|Y| = 2$, $x \notin Y$, and both vertices in $Y$ have degree three.	
	\end{enumerate}
\end{lemma}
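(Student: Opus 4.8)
The plan is to prove part~\ref{w4-no-greedy-nbr} first — it carries the real content — and then read off part~\ref{w4-4ecut} from it together with Lemma~\ref{w4-no-deep-4ecut}. For part~\ref{w4-no-greedy-nbr} I would argue by contradiction: suppose $v,w\in V\setminus\{x\}$ satisfy $e(v,w)\ge\frac12 d(w)$. Since $G$ is $3$-edge-connected, $d(w)\ge3$, so $e(v,w)\ge2$ and $w$ has at most $e(v,w)$ edges to $V\setminus\{v,w\}$; pairing each of those with a distinct $vw$-edge, splitting it off at $w$, and deleting the leftover parallel $vw$-edges exhibits $G\succ_r G'$, where $G':=G.\{v,w\}$ is rooted at $x$. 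One then checks the routine facts that $G'$ is $3$-edge-connected and internally $4$-edge-connected (every separation of $G'$ lifts to a separation of $G$ with sides no smaller), that $|V(G')|=|V|-1\ge8$ by Lemma~\ref{w4-code}, and that $G'\nsucc_r W_4$ (else $G\succ_r W_4$). Minimality of $G$ then forces Theorem~\ref{rw4-thm} to hold for $G'$, so $G'$ has one of types $1,2,3,4$, and the task becomes to show $G$ itself has one of these types, a contradiction.

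For that step I would track the contracted vertex $p=[vw]$ inside the structure witnessing the type of $G'$; recovering $G$ just re-splits $p$ into $v,w$ with $\ge d(w)/2$ parallel edges between them, the edges at $p$ being distributed between $v$ and $w$. If $G'$ has type~$1$ relative to $(U,W_0)$: if $p\in U$, then $(U\setminus\{p\})\cup\{v,w\}$ is a size-$3$ set through $x$ of boundary $4$ in $G$, contradicting Lemma~\ref{w4-no-deep-4ecut}; if $p$ is interior to the segmentation, or $p\in W_0$ with $|W_0|\ge2$, then some segmentation set pulls back to a set of size $\ge2$, co-size $\ge3$, through $x$, of boundary $4$, again contradicting Lemma~\ref{w4-no-deep-4ecut}; and the only surviving case $W_0=\{p\}$ makes $\{v,w\}$ an internal $4$-edge-cut of $G$ avoiding $x$, where a short computation ($d_G(X_k\cup\{v\})=d(w)$ and $d_G(X_k\cup\{w\})=d(v)$ for the top set $X_k$ of the segmentation) either extends the width-$4$ segmentation of $G'$ to one of $G$ (when $d(v)=4$ or $d(w)=4$), or leaves the degenerate possibility of a $4$-edge-cut $\{v,w\}$ with $d(v),d(w)\ne4$, which I would dispose of by a direct argument about this two-vertex cut. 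If instead $G'$ has type~$2,3$, or $4$, then $p$ is forced to be one of the boundedly many vertices of degree $\ge5$ in such a graph, or $\{v,w\}$ realizes a chain of sausages or a contracted blob of $G$; in each case I expect $G$ again to land in type~$2,3$, or $4$ — e.g.\ re-splitting a degree-$4$ vertex of a doubled cycle into a double edge is exactly a chain of sausages, which types~$2$ and $3$ allow — with leftover configurations excluded by Lemmas~\ref{w4-near-root} and \ref{w4-no-deep-4ecut}.

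Part~\ref{w4-4ecut} should then be short. Given an internal $4$-edge-cut $\delta(Y)$ with $|Y|\le|V\setminus Y|$, we have $|V\setminus Y|\ge|V|/2\ge5$ using Lemma~\ref{w4-code}. If $x\in Y$, then $Y$ has size $\ge2$ and co-size $\ge3$, so Lemma~\ref{w4-no-deep-4ecut} gives $d(Y)\ge5$, impossible; hence $x\notin Y$. If moreover $|Y|\ge3$, then $V\setminus Y$ has size $\ge5\ge2$, co-size $|Y|\ge3$, and contains $x$, so Lemma~\ref{w4-no-deep-4ecut} gives $d(V\setminus Y)\ge5$, again impossible. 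Thus $|Y|=2$, say $Y=\{v,w\}$. Applying part~\ref{w4-no-greedy-nbr} to the pair in both orders yields $2e(v,w)<d(w)$ and $2e(v,w)<d(v)$; adding these and using $d(v)+d(w)-2e(v,w)=d(Y)=4$ gives $e(v,w)\le1$, hence $d(v)+d(w)\le6$, forcing $d(v)=d(w)=3$, as claimed.

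The hard part will be the case analysis in part~\ref{w4-no-greedy-nbr} — verifying that every type of $G'$ really does drag $G$ back into one of types~$1$–$4$. The two places I expect friction are the residual situation in which $\{v,w\}$ is itself a $4$-edge-cut of $G$ not containing the root (which the cut lemmas of the previous subsections do not quite reach, since its small side has only two vertices) and the stability of types~$2,3,4$ under re-splitting a vertex into a double-edge pair. What should keep all of this finite is precisely the small-graph verification of Lemma~\ref{w4-code}, which removes the sporadic exceptions, together with the restrictions on small edge cuts supplied by Lemmas~\ref{w4-near-root}, \ref{w4-4ecut-Xc-sausage}, and \ref{w4-no-deep-4ecut}.
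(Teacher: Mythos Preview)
Your setup for part~\ref{w4-no-greedy-nbr} and your proof of part~\ref{w4-4ecut} are essentially the paper's. The divergence is in how you finish part~\ref{w4-no-greedy-nbr}: once you know $G'$ must have one of types~1--4, you try to lift the witnessing structure back to $G$ and conclude that $G$ itself has one of the types. This is where your sketch becomes tentative (``I expect $G$ again to land in type~2,~3, or~4''), and it is not the paper's route.

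The paper instead shows that $G'$ cannot have any of the four types, yielding an immediate contradiction. The missing observation is a plain vertex count: type~4 graphs have at most five vertices, and sausage-reduced graphs of type~2 or~3 have at most seven. Since $|V(G')|=|V|-1\ge 8$ by Lemma~\ref{w4-code}, it therefore suffices to show that $G'$ is sausage reduced. But a chain of sausages of order $\ge 3$ in $G'$ is a set $Y'$ with $|Y'|\ge 3$ and $d_{G'}(Y')=4$, which lifts to a set $Y\subset V$ with $|Y|\ge 3$, $d_G(Y)=4$; this contradicts Lemma~\ref{w4-no-deep-4ecut} (applied with $X=V\setminus Y$). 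Exactly the same appeal to Lemma~\ref{w4-no-deep-4ecut} disposes of type~1: since $|V(G')|\ge 8$, any width-4 segmentation of $G'$ contains a set $X_i$ with $x\in X_i$, $|X_i|\ge 2$, $|V(G')\setminus X_i|\ge 3$, and this lifts to a forbidden $4$-edge-cut of $G$ regardless of where the contracted vertex $p$ sits. In particular your ``surviving case'' $W_0=\{p\}$ is not actually surviving --- you can simply look at an interior $X_i$ rather than the last one.

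Your lifting approach could perhaps be pushed through, but it is the hard way: types~2,~3,~4 each involve several shapes, and re-splitting $p$ into $\{v,w\}$ does not obviously stay within the list without a further case check. The paper's count argument sidesteps all of this in two lines.
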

\begin{proof}
	For part \ref{w4-no-greedy-nbr}, suppose (for a contradiction) that such $v, w$ exist. Let $G'= G. \{v, w\}$, rooted at $x$. Note that $|V(G')| = |V(G)| -1$, and since $e (v, w)\ge \frac{1}{2} d(w)$, we have  $G \succ_r G'$. Therefore  $G'\nsucc_r W_4$, and the Theorem  holds for $G'$. Since $|V(G')|\ge 8$, $G'$ has one of the types 1, 2, or 3. However, $G'$ being type $1$ implies that there is an internal $4$-edge-cut in $G$ with at least three vertices on opposite side of $x$, contradicting Lemma \ref {w4-no-deep-4ecut}. So $G'$ is not type $1$. In a similar manner we conclude that $G'$ is sausage reduced, and since $|V(G')|\ge 8$, $G'$ is not type $2$ or 3 either, a contradiction. (Observe that after sausage reduction a graph of type $2$ or 3 has at most seven vertices). 
	For part \ref{w4-4ecut}, note that it follows from Lemma \ref{w4-no-deep-4ecut} that $|Y| =2$, and $x\notin Y$. Let $Y =\{ u, v\}$. Since $d (Y) =4$, we have $e (u, v) >0$, and part \ref{w4-no-greedy-nbr} implies that $e (u, v) =1$, as desired.
\end{proof}

\begin{lemma}
	\label{w4-no-deep-5ecut}
	Let $ G = (V, E) $ be a graph rooted at $x$ which is a counterexample to Theorem \ref{rw4-thm} with $|V|$ minimum. Then there does not exist a 5-edge-cut $\delta (X)$ with $x\in X$ satisfying $|X|\ge 3$ and $|V\setminus X| \ge 4$.
\end{lemma}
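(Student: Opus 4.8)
Here is the plan. Suppose for contradiction that such a $5$-edge-cut $\delta(X)$ exists, and choose $X$ with $x\in X$, $|X|\ge 3$, $|V\setminus X|\ge 4$, $d(X)=5$ so that $|X|$ is minimum. Set $G'=G.(V\setminus X)$, rooted at $(x,y)$ where $y$ is the vertex obtained by identifying $V\setminus X$; then $d_{G'}(y)=5$, $|V(G')|\ge 4$, and internal $4$-edge-connectivity of $G$ gives $G\succ_r G'$. We may assume $d(x)\ge 5$; the case $d(x)=4$ is identical with $D_4$ in place of $D_5$ throughout (there $\lambda_s(G')=4$), just as in the $4$-edge-cut proof. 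This imitates, one level up, the treatment of $4$-edge-cuts in Lemmas~\ref{w4-4ecut-Xc-sausage} and~\ref{w4-no-deep-4ecut}.

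The first step is to show $G'\succ_r D_5$. I would check that $G'$ meets the hypotheses of Theorem~\ref{Dm-thm} with $m=5$: $\lambda_n(G')\ge 3$ and $\lambda_n^i(G')\ge 4$ are inherited from $G$, while $\lambda_s(G')\ge 5$ follows from Lemma~\ref{w4-no-deep-4ecut} (any $x$--$y$ cut of $G'$ is $\delta_G(W)$ with $x\in W\subseteq X$ and $|V\setminus W|\ge 4$) together with $d(x)\ge 5$. So if $G'\nsucc_r D_5$ then $G'$ has type $A_5$ or $B_5$; type $B_5$ is impossible because both roots of a type-$B_5$ graph have degree $m+1=6\ne 5=d_{G'}(y)$. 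If $G'$ has type $A_5$, then by part~\ref{dx0m} of Observation~\ref{seg-obs} (using $d_{G'}(y)=5$) we may take $\{y\}$ as the tail, so the head $X_0$ satisfies $x\in X_0\subseteq X$ and $|X_0|\le 2$, and $G$ then carries a width-$5$ segmentation from $X_0$ to $V\setminus X$. The set $Z$ of this segmentation with $|Z|=3$ has $x\in Z\subseteq X$, $d_G(Z)=5$, $|V\setminus Z|\ge 4$; if $|X|>3$ this violates minimality of $|X|$, and if $|X|=3$ then $Z=X$ and part~\ref{no-ht-even} of Observation~\ref{seg-obs} forces the lone vertex of $X\setminus X_0$ to send all of its edges into $X_0$, hence (since $d(X)=5$) to be isolated — impossible. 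Thus $G'\succ_r D_5$.

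Next I would fix a rooted immersion of $D_5$ in $G'$ with roots $(x_0,x_1)=(x,y)$; since $d_{G'}(y)=5$ it uses all five edges of $\delta_{G'}(y)$, which split as $\{e_1,e_2,e_3\}$ (lying on the three paths realizing the edges of $E(x_0,x_1)$) and $\{f_0,f_1\}$ (lying on the two paths to the non-root vertices $y_0,y_1$). As in Lemma~\ref{w4-4ecut-Xc-sausage}, subdivide $e_1,e_2,e_3$ and identify the three new degree-two vertices to a vertex $a$, subdivide $f_0,f_1$ and identify the two new vertices to a vertex $b$, and let $G^*$ be the subgraph induced on $(V\setminus X)\cup\{a,b\}$, rooted at $(a,b)$, so $d_{G^*}(a)=3$ and $d_{G^*}(b)=2$. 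The crucial claim is that $G^*\succ_r D_2$ implies $G\succ_r W_4$: one would build $W_4$ with center $x$, rim $[y_0]$--$[y_1]$--$v$--$w$--$[y_0]$ where $[y_0],[y_1]$ are the non-root images of the $D_5$-immersion and $v,w$ those of the $D_2$-immersion, two spokes being the $D_5$-paths $x_0y_0,x_0y_1$, the other two running through $a$ along two of $e_1,e_2,e_3$ and then along the $D_2$-paths $av,aw$, and the four rim edges supplied by the $D_5$-path $y_0y_1$, by the $D_5$-paths $x_1y_0,x_1y_1$ continued out of $[y_0],[y_1]$ through $f_0,f_1$, by the $D_2$-edges at $b$, and by the $D_2$-path $vw$. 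Edge-disjointness is immediate since the $D_5$-paths only use edges inside $X$, the $D_2$-paths only edges inside $V\setminus X$, and each of $e_1,e_2,f_0,f_1$ is used exactly once. Hence $G\nsucc_r W_4$ gives $G^*\nsucc_r D_2$.

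Finally, as in Lemma~\ref{w4-4ecut-Xc-sausage} I would verify that $G^*$ meets the hypotheses of Theorem~\ref{Dm-thm} with $m=2$: $|V(G^*)|\ge 6$; $d_{G^*}(Z)=d_G(Z)$ for every $Z\subseteq V\setminus X$, so $\lambda_n(G^*)\ge 3$ and $\lambda_n^i(G^*)\ge 4$; and $G[V\setminus X]$ is $2$-edge-connected (a bridge of it would, together with the edges of $\delta_G(X)$, create an internal $3$- or $4$-edge-cut of $G$ contradicting internal $4$-edge-connectivity or part~\ref{w4-4ecut} of Lemma~\ref{rw4-local}, using $|V\setminus X|\ge 4$), giving $\lambda_s(G^*)\ge 2$. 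So $G^*$ has type $A_2$ or $B_2$; it cannot be type $B_2$ since both roots of a type-$B_2$ graph have degree $3$ while $d_{G^*}(b)=2$. If $G^*$ has type $A_2$, then taking $\{b\}$ as one head (part~\ref{dx0m} of Observation~\ref{seg-obs}), the other head is $\{a,p\}$ for some $p\in V\setminus X$ (not $\{a\}$, as $d_{G^*}(a)=3$) with $d_{G^*}(\{a,p\})=2$; counting the edges at $a$ and at $p$, and using that $G^*$ has no non-root vertex of degree $\le 2$, forces $p$ to have at most two edges into $V\setminus X\setminus\{p\}$, so $\delta_G(X\cup\{p\})$ has size at most $4$, an internal $\le 4$-edge-cut of $G$ (here $|V\setminus X\setminus\{p\}|\ge 3$) whose small side is either too large or not of the special form of part~\ref{w4-4ecut} of Lemma~\ref{rw4-local} — a contradiction. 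I expect the two delicate points to be excluding the type-$A_5$ alternative for $G'$ (where the minimal choice of $|X|$ and the $|X|=3$ base case are essential) and the endgame showing type $A_2$ of $G^*$ always manufactures a forbidden small internal edge-cut of $G$; both are routine in spirit, but the bookkeeping with the five crossing edges and the gadget vertices $a,b$ requires care.
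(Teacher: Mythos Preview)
Your overall architecture---contract the far side, find a rooted $D_m$ immersion, then look for a $D_2$ on the other side---is sound and parallels the paper's treatment of $4$-edge-cuts. But there is a genuine gap, and the paper takes a different (and shorter) route that avoids it.

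\textbf{The gap.} Your exclusion of type $A_5$ when $|X|=3$ does not work. You invoke part~\ref{no-ht-even} of Observation~\ref{seg-obs} to conclude that the lone vertex $z$ of $X\setminus X_0$ ``sends all of its edges into $X_0$,'' but that observation only says $d(z)$ is even; in fact the width-$5$ constraint gives $e(z,X_0)=e(z,V\setminus X)$, not $e(z,V\setminus X)=0$. So your ``hence isolated'' conclusion is unfounded. Moreover, nothing in the established lemmas rules out this configuration: you would have $d_G(X_0)=5$ with $|X_0|=2$ and $x\in X_0$, which is consistent with Lemma~\ref{w4-near-root}, and the resulting relations like $d(z)=2e(z,X_0)$ or $d(a)=2e(x,a)$ are not excluded by Lemma~\ref{rw4-local}\ref{w4-no-greedy-nbr} since one of the two vertices involved is $x$. (You also do not address $|X_0|=1$ here.) So as written, the case $|X|=3$ with $G'$ of type $A_5$ is open.

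\textbf{What the paper does instead.} The paper never uses $D_5$. It shows $G'=G.(V\setminus X)$ has a rooted immersion of $D_4$ (excluding type $A_4$ via Lemma~\ref{w4-near-root} and type $B_4$ via Lemma~\ref{rw4-local}\ref{w4-no-greedy-nbr}). Since $d_{G'}(y)=5$ but $D_4$ only uses four edges at $y$, one edge $e\in\delta(X)$ can be deleted while preserving the $D_4$ immersion. In $H=G\setminus e$ (with degree-$2$ vertices suppressed) the set corresponding to $V\setminus X$ now sits behind a $4$-edge-cut, so the argument of Lemma~\ref{w4-4ecut-Xc-sausage} applies verbatim and forces it to be a chain of sausages of order $\ge 3$. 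A short endgame then produces a vertex of degree four in $G$ with a parallel pair, contradicting Lemma~\ref{rw4-local}\ref{w4-no-greedy-nbr}. This ``drop one edge and reuse the $4$-cut lemma'' trick is the missing idea; it removes both your case split on $d(x)$ and the problematic type-$A_5$ analysis.
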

\begin{proof}
	Suppose (for a contradiction) that such a set $X$ exists and let $Y = V\setminus X$. Consider the graph $G' = G.Y$ where $y$ is the vertex formed by identifying $Y$ and treat this as a rooted graph with roots $(x,y)$.  We claim that $G'$ with $(x,y)$ has a rooted immersion of $D_4$.  Otherwise Theorem \ref{Dm-thm} implies that $G'$ has type $A_4$ or type $B_4$. In the former case, we get a contradiction with Lemma \ref{w4-near-root}, and in the latter case, there will be a neighbour $u$ of $x$ with $e(x, u) =2$ and $d(u)=4$, which contradicts Lemma \ref{rw4-local}\ref{w4-no-greedy-nbr}.
	
Since $d(X) = 5$ we may choose an edge $e \in \delta(X)$ so that $D_4 \prec_r (G' \setminus e)$.  Let $H$ be the graph obtained from $G \setminus e$ by suppressing any vertices of degree 2 and let $Y'$ denote the subset of $V(H)$ corresponding to $Y$ (so either $Y' = Y$ or $Y' = Y \setminus \{w\}$ where $d_{G}(w) = 3$ and $w$ is incident to $e$).  Now by an argument similar to that of Lemma \ref{w4-4ecut-Xc-sausage} we find that $H[Y']$ is a chain of sausages (of order at least three).  However, it is easy to check that whether the endpoint of $e$ in $G$ is a vertex present in $H$ or it got suppressed, there is a vertex of degree four in $G$ incident with a pair of parallel edges, and this contradicts  Lemma \ref{rw4-local}\ref{w4-no-greedy-nbr}.
\end{proof}

\subsection{Finishing the proof}
In this subsection, we use Lemmas \ref{w4-code}, \ref{rw4-local}, \ref{w4-no-deep-5ecut} to prove our main result on graphs which exclude an immersion of rooted $W_4$.
\begin{proof}[Proof of Theorem \ref{rw4-thm}]
	Suppose (for a contradiction) that the Theorem is false, and let $G = (V, E)$ be a counterexample to the Theorem so that
	\begin{enumerate}[label=(\roman*)]
		\item $|V|$ is minimum.
		\item $|E|$ is minimum subject to (i).
	\end{enumerate}
	First, we show that $G$ is simple. Otherwise, there exist adjacent vertices $u, v$ such that $e (u, v) \ge 2$. Let $G'$ be the graph obtained from  $G$ by deleting one copy of $uv$. Note that Lemma \ref{w4-code} implies that $|V(G')|\ge 9$ and Lemma \ref{rw4-local}\ref{w4-no-greedy-nbr} implies that $G'$ is $3$-edge-connected. It also follows from Lemma \ref{rw4-local}\ref{w4-4ecut} that $G'$ is internally $4$-edge-connected. So by minimality of $G$, Theorem \ref{rw4-thm} holds for $G'$. Since $G \succ_r G'$, $G'$ does not immerse rooted $W_4$ and thus $G'$ has one of the types 1, 2, 3, or 4. If $G'$ is type $1$, there exists $X\subset V$  with $|X|, |V\setminus X| \ge 4$ such that $d_{G'} (X) = 4$, and this contradicts Lemma \ref{rw4-local}\ref{w4-4ecut} or \ref{w4-no-deep-5ecut}. Now note that it follows from Lemma \ref{rw4-local}\ref{w4-no-greedy-nbr} that $G'$ is sausage reduced. So $|V(G')| \ge 9$ implies that $G'$ is not type $2$ or 3 either---a contradiction.
	
	So, $G$ is simple. Now, denote the root vertex of $G$ by $x$, and choose an edge $e$ which is not incident with $x$. Let $G'$ be the graph obtained from $G$ by deleting $e$, and suppressing any degree two vertices. So $|V(G')|\ge |V(G)| -2 \ge 7$, and $G'$ is $3$-edge-connected. It follows from Lemma \ref{rw4-local}\ref{w4-4ecut} that $G'$ is also internally $4$-edge-connected. So by minimality of $G$, Theorem \ref{rw4-thm} holds for $G'$. As in the last paragraph, $G'$ cannot have type $1$, otherwise $G$ would contradict either Lemma \ref{rw4-local}\ref{w4-4ecut} or \ref{w4-no-deep-5ecut}. Finally, note that $G'$ is sausage reduced. It is because if $G'[X']$ is a chain of sausages of order $\ge 3$ in $G'$, then Lemma \ref{rw4-local}\ref{w4-4ecut} implies that $e \in \delta_G (X)$, where $X$ is the set in $G$ which corresponds to $X'$ in $G'$. But then $ G[X]$ would contain parallel edges, contradicting $G$ being simple. This rules out the possibility of $G'$ having any type other than 2A. Moreover, it implies that if $G'$ has type $2$A relative to $W$, for some $W\subset V$ with $|W|=2$, then both chain of sausages between $\{x\}, W$ are of order exactly two. A quick check shows that then $G$ is not simple. This contradiction establishes Theorem \ref{rw4-thm}.
\end{proof}
\section{Forbidding a $W_4$ immersion}
\label{sec-w4}
The main goal of this section is to give the precise structure of the graphs excluding an immersion of $W_4$. As mentioned in Section \ref{sec-intro}, this problem has been studied before. Our structural theorem on the structure of graphs without a $W_4$ immersion is as follows:
\begin{theorem}
	\label{w4-thm}
	Let $G$ be $3$-edge-connected, and internally $4$-edge-connected. If $|V(G)|\ge 5$, then $G\nsucc W_4$ if and only if
	\begin{enumerate}
		\item
		\label{w4-excep-0}
		$G$ is cubic, or
		\item
		\label{w4-excep-1}
		$G$ is isomorphic to the leftmost graph in Figure \ref{fig:w4-unrooted-w-s}.
		\begin{figure}[htbp]
			\centering
			\includegraphics{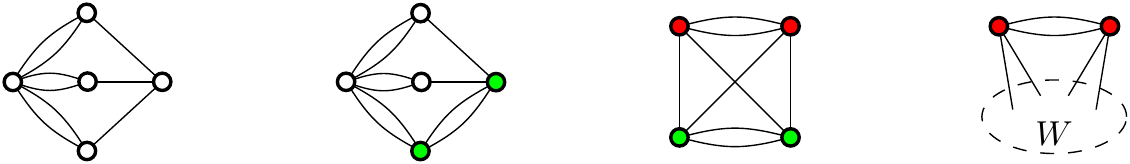}
			\caption{Non-cubic sausage reduced graphs without a $W_4$ immersion}
			\label{fig:w4-unrooted-w-s}
		\end{figure}
		\item
		\label{w4-excep-2}
		$G$ can be obtained from the second left graph above by replacing the pair of green vertices with an arbitrary chain of sausages of order at least two.
		\item
		\label{w4-excep-3}
		$G$ can be constructed from the second right graph above by replacing each pair of same-colored vertices with an arbitrary chain of sausages of order at least three.
		\item
		\label{w4-excep-4}
		There exists $W \subset V(G)$ with $1 \le |W| \le 2$ so that the graph obtained from $G$ by identifying $W$ to a single vertex is a doubled cycle, as in the rightmost structure above.
		
	\end{enumerate}
\end{theorem}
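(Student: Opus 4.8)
The plan is to deduce Theorem~\ref{w4-thm} from its rooted counterpart, Theorem~\ref{rw4-thm}, via the elementary observation that any immersion of $W_4$ must map the unique degree-four vertex of $W_4$ to some vertex $c$ of $G$; hence $G\succ W_4$ if and only if $(G;c)\succ_r W_4$ for some $c\in V(G)$, equivalently $G\nsucc W_4$ if and only if $(G;x)\nsucc_r W_4$ for \emph{every} $x\in V(G)$. Since $G$ is $3$-edge-connected, internally $4$-edge-connected and has $|V(G)|\ge 5$, Theorem~\ref{rw4-thm} says that this is equivalent to $(G;x)$ having one of the types $1$--$4$ for every root $x$. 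Thus the theorem amounts to translating, in both directions, the root-dependent families of Theorem~\ref{rw4-thm} into the root-free families~\ref{w4-excep-0}--\ref{w4-excep-4}.

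For the `if' direction I would handle each family in turn. A cubic graph (family~\ref{w4-excep-0}) cannot immerse $W_4$ since $W_4$ has a vertex of degree four. For the near-doubled-cycle family~\ref{w4-excep-4}, such a $G$ has a width-four segmentation with any prescribed vertex lying in a head of size at most two (if the two identified vertices are present they jointly form such a head; otherwise a singleton head works), so part~\ref{t1now4} of Observation~\ref{containw4obs} shows no vertex can play the centre of a $W_4$ immersion. For the sporadic graphs in~\ref{w4-excep-1} and the base graphs underlying~\ref{w4-excep-2} and~\ref{w4-excep-3}, a finite verification---done, consistent with the rest of the paper, by computer---suffices, and part~\ref{sausageok} of Observation~\ref{containw4obs}, applied (as in the proof of Theorem~\ref{rw4-thm}) to chains of sausages disjoint from the prospective centre, reduces the larger members of~\ref{w4-excep-2} and~\ref{w4-excep-3} to those base graphs.

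For the `only if' direction, assume $(G;x)$ has one of the types $1$--$4$ for every $x$. If $G$ is cubic we are in case~\ref{w4-excep-0}, so we may assume some vertex $x$ has degree at least four, and Theorem~\ref{rw4-thm} applies at $x$ and at every such vertex. I would then run through the four possible types for $x$. A type-$1$ structure at $x$, namely a $(2,3)$-segmentation of width four through $x$, forces $G$ to be path-like with ends of bounded size; combining this with the fact that the conclusion of Theorem~\ref{rw4-thm} must also hold at the other vertices of $G$ (which rules out ``thick strands'' and similar degeneracies) shows that $G$ is either a sporadic five-vertex graph or a member of~\ref{w4-excep-4}. A type-$2$ structure relative to some $W$ with $|W|\le 2$ makes $G.W$ a doubled cycle together with one or two extra edges joining the root-image and $w$ (possibly through a common neighbour); one shows that after an appropriate identification absorbing those edges, or after sausage-reducing the doubled arcs, $G$ lies in~\ref{w4-excep-4} or~\ref{w4-excep-3}. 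Finally, types $3$ and $4$ are sporadic up to sausage reduction, and reversing the sausage shortenings yields precisely the graphs of~\ref{w4-excep-2} and~\ref{w4-excep-3} (or the sporadic graph of~\ref{w4-excep-1}). Throughout, one re-roots freely among vertices of degree at least four, uses part~\ref{sausageok} of Observation~\ref{containw4obs} to pass from root-dependent to root-free descriptions, and appeals to a bounded computer search to clear the finitely many small graphs that survive.

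The main obstacle I anticipate is precisely this translation, since the types of Theorem~\ref{rw4-thm} are phrased relative to a distinguished vertex and via pictures that single out the centre, whereas the conclusion of Theorem~\ref{w4-thm} is symmetric in all vertices. The delicate points are reconciling the ``doubled cycle plus a chord'' and ``doubled cycle plus a tripled edge'' shapes hidden inside type~$2$ with the clean statement of case~\ref{w4-excep-4}---that is, showing that whenever those extra edges survive, either a different vertex identification or a sausage reduction reveals $G$ to be of type~\ref{w4-excep-3} or~\ref{w4-excep-4}---and ensuring that the sausage shortenings can be carried out without disturbing the (variable) vertex serving as the centre. As elsewhere in the paper, the remaining low-order sporadic cases are finite and are dispatched by computer.
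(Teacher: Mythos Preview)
Your approach differs substantially from the paper's. You propose to deduce the unrooted theorem by applying the rooted Theorem~\ref{rw4-thm} at every vertex and then translating the resulting type-$1$--$4$ descriptions into the root-free families~\ref{w4-excep-0}--\ref{w4-excep-4}. The paper instead runs a minimum-counterexample argument whose engine is Mader's splitting theorem (stated as Theorem~\ref{maderthm}): it invokes the rooted theorem only lightly, to show that a counterexample has a vertex of even degree, that there are at most two vertices of odd degree, and that every even-degree vertex is joined by a doubled edge to another even-degree vertex; then, at a vertex of degree $\ge 4$ (or, when $|V|>5$, of degree $\ge 5$ if one exists), it splits off a pair of edges preserving all local edge-connectivities, obtains a smaller graph to which the theorem applies by minimality, and checks that undoing the split leaves $G$ in one of the listed families.

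Your route is logically sound as a starting point---indeed $G\nsucc W_4$ is equivalent to $(G;x)\nsucc_r W_4$ for all $x$---but the translation you flag as ``the main obstacle'' is genuinely harder than your sketch suggests. The rooted types are defined relative to a specific root, and you do not explain, for instance, why a type-$1$ segmentation at one high-degree vertex, combined with whatever type holds at the vertices in its tail $W$, forces $G$ into family~\ref{w4-excep-4}; the tail can have size three with internal structure constrained only by $d(W)=4$, and pinning that down by re-rooting at the tail vertices (which may all have degree three, so that Theorem~\ref{rw4-thm} gives no information there) is a nontrivial case analysis you have not outlined. Similar issues arise in your type-$2$ paragraph: the passage from ``doubled cycle plus a chord or tripled edge'' to families~\ref{w4-excep-3} or~\ref{w4-excep-4} is asserted rather than argued. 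The paper sidesteps all of this: Mader's theorem gives a one-step reduction in $|V|+|E|$ that preserves the connectivity hypotheses (the degree-parity information extracted from the rooted theorem is used precisely to verify that internal $4$-edge-connectivity survives the split), and the lifting step is a short check against the explicit families. What your approach would buy is conceptual directness---no external splitting lemma---at the cost of a case analysis that the paper's inductive argument avoids.
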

\subsection{Proof of the main theorem}
We will use our result on rooted immersions of $W_4$ to prove our theorem on unrooted immersions of $W_4$.  In addition we require the following key result.

\begin{theorem}[Mader]
\label{maderthm}
Let $G$ be a $2$-edge-connected graph and let $v \in V(G)$ satisfy $\mathrm{deg}(v) \ge 4$.  Then there exist $e,f$ incident with $v$ so that the graph $G'$ obtained from $G$ by splitting off $e,f$ satisfies $\lambda_{G'}(u,w) = \lambda_{G}(u,w)$ for every $u,w \in V \setminus \{v\}$.
\end{theorem}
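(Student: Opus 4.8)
The plan is to argue by contradiction via the theory of \emph{dangerous sets}, the standard route to Mader-type splitting-off results. For a vertex $v$ and distinct edges $e=vx$, $f=vy$ at $v$, write $G_{ef}$ for the result of splitting off $e,f$ (remove $e,f$, add $xy$, discard the resulting loop if $x=y$). The easy half is that splitting never helps: any cut separating $s,t\in V\setminus\{v\}$ survives the split with no larger value, so $\lambda_{G_{ef}}(s,t)\le\lambda_G(s,t)$ for all $s,t\ne v$. Call the pair $\{e,f\}$ \emph{admissible} if equality holds throughout; the theorem asks for an admissible pair, and the hypotheses ($2$-edge-connected, $\deg(v)\ge 4$) encode exactly ``no bridge at $v$'' and ``$\deg(v)\notin\{2,3\}$'', which is the precise scope of Mader's theorem.

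The combinatorial reformulation: for $X\subseteq V$ with $v\notin X$ and $\emptyset\ne X\ne V\setminus\{v\}$, put $r(X)=\max\{\lambda_G(s,t):s\in X,\ t\in V\setminus(X\cup\{v\})\}$; since any cut separating such an $s,t$ has at least $\lambda_G(s,t)$ edges, $d_G(X)\ge r(X)$ always. Call $X$ \emph{tight} if $d_G(X)=r(X)$ and \emph{dangerous} if $d_G(X)\le r(X)+1$. A short calculation shows that under the split $\{vx,vy\}$ the value $d(X)$ drops by exactly $2$ when $x,y\in X$ and $v\notin X$ (and, symmetrically, when $x,y\notin X$ and $v\in X$) and is unchanged otherwise; hence $\{vx,vy\}$ is admissible if and only if no dangerous set contains both $x$ and $y$. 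So assume, for a contradiction, that every pair of distinct edges at $v$ has its two endpoints together in some dangerous set.

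The technical heart — and the step I expect to be the main obstacle — is an uncrossing lemma for dangerous sets. Tight sets uncross cleanly: if $X,Y$ are tight and cross (they meet, neither contains the other, $X\cup Y\ne V\setminus\{v\}$), then submodularity of the cut function together with the ultrametric-type inequality $\lambda_G(s,t)\ge\min(\lambda_G(s,u),\lambda_G(u,t))$ forces one of $X\cap Y$, $X\cup Y$ to be tight, so the maximal (and minimal) tight sets form a cross-free family relative to $V\setminus\{v\}$. The delicate point is the remaining dangerous sets with $d(X)=r(X)+1$: here a parity argument is required, and this is exactly where $\deg(v)\ne 3$ is used — splitting lowers $d$ by an \emph{even} amount, so for such an ``odd'' obstructing set one can move a vertex in or out across $\delta(v)$ and either restore admissibility of some pair or manufacture a tight obstruction, after which the clean uncrossing applies (and, in the form one actually needs, one proves that two crossing dangerous sets with a common $v$-neighbour in their intersection combine to a dangerous union when that union is proper). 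The outcome is that all obstructions can be organized into a cross-free, laminar-like structure of dangerous sets.

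Finally I would extract the contradiction. Fix an edge $e_0=vx_0$; for every edge $vy$ at $v$ there is a dangerous set containing $x_0$ and $y$, all of these contain $x_0$ hence pairwise meet, so by repeated uncrossing (passing to a minimal covering subfamily) one obtains \emph{two} dangerous sets $D_1,D_2$ with $D_1\cup D_2=V\setminus\{v\}$ and $x_0\in D_1\cap D_2$. Writing $A=D_1\cap D_2$, $B=D_1\setminus D_2$, $C=D_2\setminus D_1$, the witness pair of $D_1$ has its $t$-endpoint in $C$ and the witness pair of $D_2$ has its $t$-endpoint in $B$, so $d(C)\ge d(D_1)-1$ and $d(B)\ge d(D_2)-1$; combining with the identity $d(D_1)+d(D_2)=d(B)+d(C)+2\,d(v,A)$ gives $d(v,A)\le 1$. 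Since $x_0$ is a $v$-neighbour in $A$ it is the only one, and the remaining $\deg(v)-1\ge 3$ edges at $v$ split between $B$ and $C$; running this over the choices of $x_0$ — and noting that in the degenerate case where $v$ has a single neighbour this already forces $d(v,A)=\deg(v)\ge 4>1$ — pins the edge-distribution around $v$ down so tightly that it cannot be realized. This last bookkeeping is routine but fiddly, and, together with the odd-set parity step, is the only place the hypothesis $\deg(v)\ge 4$ is consumed, as it must be, since the statement genuinely fails for $\deg(v)=3$.
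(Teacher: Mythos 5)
First, a point of context: the paper does not prove this statement at all --- it is Mader's splitting-off theorem, quoted as a known black box (the standard reference is Mader 1978, with the now-standard proof due to Frank). So there is no in-paper argument to compare against; the only question is whether your sketch actually closes the argument, and it does not. Your framework is the right one (the easy monotonicity of $\lambda$ under splitting, the function $r(X)=\max\{\lambda_G(s,t):s\in X,\ t\in V\setminus(X\cup\{v\})\}$, dangerous sets, and the criterion that $\{vx,vy\}$ is admissible iff no dangerous set contains both $x$ and $y$ --- all of this is correct and correctly argued). But the two steps that constitute the entire difficulty of Mader's theorem are, respectively, stated incorrectly and left unjustified.

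Concretely: (i) Your uncrossing claim --- ``two crossing dangerous sets with a common $v$-neighbour in their intersection combine to a dangerous union when that union is proper'' --- is false in general. Running the two submodular inequalities against the skew-supermodularity of $r$, the case in which $r(X)+r(Y)\le r(X\setminus Y)+r(Y\setminus X)$ does \emph{not} yield that $X\cup Y$ is dangerous; it yields instead that $e\bigl(X\cap Y,\ V\setminus(X\cup Y\cup\{v\})\bigr)=0$ and $e(v,X\cap Y)=1$, which is a different (and weaker) conclusion that the genuine proof must then exploit separately. Since your reduction to two covering sets $D_1,D_2$ rests on this lemma, the reduction is unsupported. (ii) Your endgame assumes $D_1\cup D_2=V\setminus\{v\}$. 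Nothing forces this: covering the neighbours of $v$ does not cover $V\setminus\{v\}$, and both your identity $d(D_1)+d(D_2)=d(B)+d(C)+2\,d(v,A)$ and your claim that the witness endpoints of $D_1,D_2$ land in $C$ and $B$ fail when the union is proper. The correct counting goes through complements: $d\bigl(V\setminus(D_i\cup\{v\})\bigr)\ge r(D_i)\ge d(D_i)-1$ gives $d(v,D_i)\le\tfrac{1}{2}(d(v)+1)$, whence $d(v)\le d(v,D_1)+d(v,D_2)-e(v,D_1\cap D_2)\le d(v)$. This kills $d(v)$ even (and $d(v)=4$ outright), but for odd $d(v)\ge5$ it only forces equality everywhere, and a further argument is required; your ``parity step'' addresses a different issue and your ``routine but fiddly bookkeeping'' is exactly where that missing case lives. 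As it stands the proposal is a correct map of the standard proof with its two load-bearing lemmas absent or wrong; the sensible course for this paper is to keep the theorem as a citation.
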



\begin{proof}
	[Proof of Theorem \ref{w4-thm}]
	Let $G = (V,E)$ be a counterexample to the theorem for which $|V| + |E|$ is minimum.  We will establish a sequence of properties of $G$ eventually proving it cannot exist.
	
	\begin{enumerate}[label=(\arabic*), labelindent=0em ,labelwidth=0cm, parsep=6pt, leftmargin =7mm]
		
		\item There is a vertex of even degree in $G$. \label{geteven}
		
		If $G$ is cubic the theorem holds, so we may choose $u \in V$ with $d(u) \ge 4$.  If we treat $u$ as a root vertex, there cannot be a rooted immersion of $W_4$, so by Theorem \ref{rw4-thm} this rooted graph must have type $1, 2, 3$, or $4$.  All graphs of types 2, 3, and 4 have a vertex of even degree, so we are done unless our rooted graph has type $1$ relative to some $(U, W)$.  If $|V| = 5$, then $G$ has a vertex of even degree by parity, and else, any vertex in $V\setminus (U\cup W)$ has even degree (by part \ref{no-ht-even} of Observation \ref{seg-obs}).
		

		\item $G$ has at most two vertices of odd degree.  Furthermore, if $u \in V$ has even degree, then: \label{evenres}
	\begin{itemize}
	\item There exists $v \in N(u)$ with even degree so that $e(u,v) \ge 2$.
	\item There does not exist $w \in N(u)$ with odd degree so that $2 e(u,w) > d(w)$.
	\end{itemize}

		Let $u \in V$ have even degree (note that by \ref{geteven} such a vertex exists). 
		Now treat $u$ as a root vertex of $G$ and apply Theorem \ref{rw4-thm}.  Since $G$ does not have an immersion of $W_4$ (and $d(u)$ is even), this rooted graph must have type 1, 3, or 4.  For types 3 or 4 a straightforward check shows that $G$ has an immersion of $W_4$ unless either case \ref{w4-excep-1} or \ref{w4-excep-2} occur.  Therefore, $G$ must have a $(2,3)$-segmentation of width four relative to $(U,W)$ where $u \in U$.  By possibly removing the first or last set in this segmentation, we may further assume $|U| = 2$ and $|W|=3$.  Let $U = \{u,v\}$ and note that $4 = d( \{u,v\} ) = d(u) + d(v) - 2e(u,v)$ and it follows that $v$ has even degree and $e(u,v) \ge 2$.  Every vertex of odd degree must be contained in $W$ (by Observation \ref{seg-obs}) and $|W| \le 3$ so $G$ has at most two vertices of odd degree.  Moreover, if $w \in W$ has odd degree, then $2 e(u,w) > d(w)$ is impossible since this would give the contradiction $d(W \setminus \{w\}) \le 3$.  This completes the proof.

		\item $|V| = 5$ and $G$ has maximum degree four. \label{5limv}
		
		Suppose (for a contradiction) that \ref{5limv} is false, and (using \ref{geteven}) choose $v \in V$ so that either $d(v) \ge 5$, or both $d(v) = 4$ and $|V| > 5$. Apply Theorem \ref{maderthm} to choose a pair of edges $e,f$ incident with $v$ and let $G'$ be the graph obtained from $G$ by splitting off $e,f$ and suppressing any resulting degree 2 vertex.  It follows immediately from this construction that $G'$ is 3-edge-connected.  Suppose (for a contradiction) that $G'$ is not internally 4-edge-connected.  In this case there must exist $X \subseteq V$ with $2 \le |X| \le |V| - 2$ for which $d_{G'}(X) = 3$.  Note that by parity and \ref{evenres} both $X$ and $V \setminus X$ must contain exactly one vertex of odd degree.  There cannot exist a vertex other than $v$ with degree at least 4 in $X$ and another such vertex in $V \setminus X$ as this would contradict Theorem \ref{maderthm}.  So, by possibly switching $X$ and $V \setminus X$ we may assume that $X = \{v, u \}$ where $d_G(u) = 3$ and $d_G(v)$ is an even number at least 6.  Now \ref{evenres} implies $e_G(u,v) \le 1$ and we have a contradiction to $d_{G'}( \{v,u\}) = 3$.  We conclude that $G'$ is internally 4-edge-connected.  By construction $|V(G')| \ge 5$ so by the minimality of our counterexample, $G'$ either has an immersion of $W_4$ or cases \ref{w4-excep-0}, \ref{w4-excep-1}, \ref{w4-excep-2}, \ref{w4-excep-3}, or \ref{w4-excep-4} occur.  Since $G \succ G'$ the first of these outcomes is impossible.  Our degree assumptions imply that cases \ref{w4-excep-1} or \ref{w4-excep-2} do not happen.  A straightforward check shows that in the remaining cases the original graph $G$ either contains a $W_4$ immersion or cases \ref{w4-excep-2}, \ref{w4-excep-3} or \ref{w4-excep-4} occur.  
		
\end{enumerate}

At this point \ref{evenres} and \ref{5limv} imply that the five vertices of $G$ either all have degree 4, or three have degree 4 and two have degree 3.  Furthermore, \ref{evenres} implies that every degree 4 vertex is joined by exactly two edges to another degree 4 vertex.  It follows that $G$ has type 5 and this completes the proof.
\end{proof}

\subsection{Tree-width of graphs with no $W_4$ immersion}
In this short subsection, we will introduce the tree-width of a graph and find this parameter for  obstructions to the existence of $W_4$ immersion. As a corollary, we obtain a significant strengthening of the previously known result about immersion of $W_4$ (Theorem \ref{belmonte-w4-intro}).

A {\it tree-decomposition} of a graph $G$ consists of a tree $T$ and a family $\{W_t\}_{t \in V(T)}$ of subsets of $V(G)$ that satisfy: 
\begin{itemize}
	\item
	$V(G) = \bigcup (W_t: t \in V(T))$
	\item 
	for every edge $uv$ of $G$, there exists $t \in V(T)$ with $u, v \in W_t$
	\item for every $v \in V(G)$, the subgraph of $T$ induced by $\{ t \in V(T) \mid v \in W_t \}$ is connected.  
\end{itemize}
The parameter $\max \{|W_t|-1: t \in V(T)\}$ is called the {\it width} of the tree-decomposition. 
We say a graph $G$ has {\it tree-width} 
 $k$, and write $tw(G) =k$, if $k$ is the minimum such that $G$ has a tree-decomposition of width $k$. 
 We require one easy observation concerning tree-width.

\begin{observation}
If $H$ is a subdivision of $G$, then $tw(H) = tw(G)$.
\end{observation}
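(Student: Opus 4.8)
The plan is to prove the two inequalities $tw(H) \le tw(G)$ and $tw(G) \le tw(H)$ separately, where $H$ is obtained from $G$ by subdividing one or more edges; the general case follows by induction on the number of subdivisions, so it suffices to treat the case where $H$ arises from $G$ by subdividing a single edge $e = uv$ into a path $u, z, v$ with $z$ a new vertex of degree two.

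First I would show $tw(H) \le tw(G)$. Take an optimal tree-decomposition $(T, \{W_t\}_{t \in V(T)})$ of $G$ of width $tw(G)$. Since $uv \in E(G)$, there is a node $t_0 \in V(T)$ with $u, v \in W_{t_0}$. Construct a new tree $T'$ by attaching a fresh leaf $s$ to $t_0$, and set $W'_s = \{u, z, v\}$ and $W'_t = W_t$ for all $t \in V(T)$. It is routine to check the three axioms: every vertex of $H$ appears (the new vertex $z$ appears only in $W'_s$), every edge of $H$ is covered (the edges $uz$ and $zv$ are covered by $W'_s$, and all old edges of $G$ other than $uv$ are covered as before), and the induced subgraphs remain connected (only $z$ has a new, singleton bag-set, and $u,v$ have their old connected subtrees extended by the leaf $s$ adjacent to $t_0$). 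Since $|W'_s| = 3 \le tw(G) + 1$ whenever $tw(G) \ge 2$ — and the cases $tw(G) \le 1$ correspond to $G$ a forest, for which the claim is immediate — the width does not increase.

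Next I would show $tw(G) \le tw(H)$. Take an optimal tree-decomposition $(T, \{W_t\})$ of $H$ of width $tw(H)$. The idea is simply to remove the degree-two vertex $z$ from every bag and reinstate the edge $uv$. Formally, define $W'_t = (W_t \setminus \{z\}) \cup \{u : z \in W_t\}$; that is, in each bag containing $z$ we replace $z$ by $u$ (choosing one endpoint of the subdivided path). The set $\{t : z \in W_t\}$ is connected in $T$, and each endpoint of this connected subtree must be a node also containing a neighbour of $z$, namely $u$ or $v$; consequently, after the replacement, the set of nodes whose bag contains $u$ is still connected (it is the old $u$-subtree together with the $z$-subtree, and these overlap or are adjacent along the $uz$ edge's witnessing node). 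All old edges of $G$ other than $uv$ are still covered, and the edge $uv$ is covered by any node that used to contain both $u$ and $z$, or both $z$ and $v$ — at least one such node exists along the path in $T$ between the $u$-subtree and the $v$-subtree. Finally $|W'_t| \le |W_t|$ for every $t$, so the width does not increase, giving $tw(G) \le tw(H)$.

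The main obstacle is the bookkeeping in the second direction: one must argue carefully that replacing $z$ by $u$ throughout the $z$-subtree does not break the connectivity axiom for $u$ (nor create a bag that fails to be ``glued'' correctly), and that the edge $uv$ genuinely gets covered. The cleanest way to handle this is to observe that in any tree-decomposition of $H$, the connected subtree $T_z = \{t : z \in W_t\}$ has the property that $z$'s two neighbours $u$ and $v$ each appear in some bag meeting $T_z$ (indeed, the node witnessing edge $uz$ lies in $T_z$ and contains $u$, and similarly for $v$); from this, connectivity of the new $u$-subtree and coverage of $uv$ both follow. Everything else is a direct verification of the axioms, and the degenerate low-treewidth cases are trivial. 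I would therefore present the proof as the two short constructions above, with a sentence isolating the key observation about $T_z$.
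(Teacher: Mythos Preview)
Your proof is correct and for the inequality $tw(H) \le tw(G)$ it matches the paper exactly: add a leaf bag $\{u,v,z\}$ to a node containing $u$ and $v$, and handle the forest case separately.

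For the reverse inequality $tw(G) \le tw(H)$ you take a genuinely different route. The paper dispatches this in one line by invoking minor-monotonicity of tree-width (since $G$ is a minor of $H$). You instead give an explicit construction, replacing $z$ by $u$ in every bag of an optimal tree-decomposition of $H$. This is more elementary---it avoids citing a nontrivial background fact---but at the cost of the bookkeeping you yourself flag. One small slip in your write-up: a node that used to contain $u$ and $z$ does \emph{not} cover the edge $uv$ after replacement (it just contains $u$); only a node that contained $z$ and $v$ does, and such a node exists because $zv \in E(H)$. With that correction your argument goes through. The paper's approach is shorter for anyone willing to quote minor-monotonicity; yours is self-contained.
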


\begin{proof}
A graph with at least one edge has tree-width 1 if and only if it is a forest, so we may assume $tw(G) \ge 2$.  Since tree-width is minor monotone, $tw(G) \le tw(H)$.  To prove the other inequality, it suffices to consider the case when $H$ is obtained from $G$ by subdividing the edge $uv$ with a new vertex $w$.  Consider an optimal tree decomposition of $G$ and choose a vertex $t$ of the tree with $u,v \in W_t$.  Now adding a leaf vertex $t'$ to the tree adjacent only to $t$ and setting $W_{t'} = \{u,v,w\}$ gives a tree decomposition of $H$ of the same width.  
\end{proof}

It follows from the above observation that if $H$ is a graph obtained from $G$ by sausage reducing it, then $tw (G) = tw (H)$. Now note that $K_{2, 3}$ and any graph on at most four vertices has tree-width at most three. This gives the following corollary of Theorem \ref{w4-thm}.

\begin{corollary}
Let $G$ be a $3$-edge-connected and internally $4$-edge-connected graph which does not immerse $W_4$. Then $G$ either is cubic or has tree-width at most $3$.
\end{corollary}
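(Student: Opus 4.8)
The plan is to read this off the structure theorem (Theorem~\ref{w4-thm}) together with the preceding remark that sausage reduction preserves tree-width. First, if $|V(G)| \le 4$ then the single-bag tree-decomposition gives $tw(G) \le |V(G)| - 1 \le 3$, so we may assume $|V(G)| \ge 5$. Then $G$ is $3$-edge-connected, internally $4$-edge-connected, and does not immerse $W_4$, so Theorem~\ref{w4-thm} places $G$ into one of the cases \ref{w4-excep-0}--\ref{w4-excep-4}. In case \ref{w4-excep-0} the graph $G$ is cubic and we are done, so it remains to show $tw(G) \le 3$ in the remaining four cases.

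In cases \ref{w4-excep-1}, \ref{w4-excep-2}, and \ref{w4-excep-3}, the graph $G$ becomes, after fully sausage reducing it, one of a fixed finite list of small multigraphs --- namely the graphs drawn in Figure~\ref{fig:w4-unrooted-w-s} with their sausage chains contracted maximally. By the remark following the preceding Observation, $tw(G)$ equals the tree-width of this reduced graph, and tree-width is unaffected by edge multiplicities (a tree-decomposition of the underlying simple graph serves for the multigraph). Hence it suffices to observe that the underlying simple graph of each of these reduced multigraphs either has at most four vertices or is $K_{2,3}$; since $K_{2,3}$ has tree-width $2$ (witnessed by the bags $\{a,b,x\}$, $\{a,b,y\}$, $\{a,b,z\}$ arranged in a path, where $\{a,b\}$ is the part of size two) and every graph on at most four vertices has tree-width at most $3$, we conclude $tw(G) \le 3$ in these cases.

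For case \ref{w4-excep-4}, there is $W \subseteq V(G)$ with $1 \le |W| \le 2$ such that $G' := G.W$ is a doubled cycle; its underlying simple graph is a cycle, so $tw(G') = 2$. Here I would use the general bound $tw(G) \le tw(G') + |W| - 1$: starting from an optimal tree-decomposition $(T, \{B_t\}_{t \in V(T)})$ of $G'$, replace the vertex $w^{\ast}$ obtained by identifying $W$ with all of $W$ in every bag that contains it. The result is a tree-decomposition of $G$: it covers $V(G)$; every edge of $G$ lying inside $W$ or joining $W$ to the rest corresponds to an edge or loop at $w^{\ast}$ in $G'$ and so lies in a bag that now contains all of $W$; and for each $w \in W$ the bags containing $w$ are exactly those that contained $w^{\ast}$, forming a subtree of $T$. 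Each bag grew by at most $|W| - 1 \le 1$ vertex, so this decomposition has width at most $tw(G') + 1 = 3$, giving $tw(G) \le 3$ and completing the proof.

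Essentially nothing here is hard: the only things requiring attention are the (finite) bookkeeping that each exceptional graph in cases \ref{w4-excep-1}--\ref{w4-excep-3} really does reduce to one of the listed small graphs, and the routine check that the bag-enlargement used in case \ref{w4-excep-4} yields a legitimate tree-decomposition. All the genuine content sits in Theorem~\ref{w4-thm}, on which we rely.
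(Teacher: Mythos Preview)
Your proof is correct and follows essentially the same approach as the paper: apply Theorem~\ref{w4-thm}, then use that sausage reduction preserves tree-width (via the subdivision observation) together with the fact that $K_{2,3}$ and any graph on at most four vertices has tree-width at most three. The only minor deviation is in case~\ref{w4-excep-4}, where instead of sausage reducing you bound $tw(G) \le tw(G.W) + |W|-1$ directly by expanding the bags containing the identified vertex; this is an equally valid and equally elementary way to finish that case.
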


\bibliographystyle{plain}
\bibliography{references}
\end{document}